\DeclareMathAlphabet{\mathscr}{LS1}{stixscr}{m}{n}
\pgfplotsset{compat=newest}
\newcommand{\Pic}{\operatorname{Pic}}
\newcommand{\K}{\mathbb{K}}
\newcommand{\Cl}{\operatorname{Cl}}
\newcommand{\Vect}{\operatorname{Vect}}
\newcommand{\tr}{\operatorname{tr}}
\newtheorem{theorem}{Theorem}[section]
\newtheorem{corollary}[theorem]{Corollary}
\newtheorem{lemma}[theorem]{Lemma}
\newtheorem{proposition}[theorem]{Proposition}
\theoremstyle{definition}
\newtheorem{definition}[theorem]{Definition}
\newtheorem{remark}[theorem]{Remark}
\numberwithin{equation}{section}
\title{Invertible projective 2-representations from invertible 2d TQFTs with defects}
\author{Domenico Fiorenza}
\address{Sapienza Universit\`a di Roma; Dipartimento di Matematica ``Guido Castelnuovo'', P.le Aldo Moro, 5 - 00185 - Roma, Italy; 
}
\email{fiorenza@mat.uniroma1.it}
\author{Chetan Vuppulury}
\address{Sapienza Universit\`a di Roma; Dipartimento di Matematica ``Guido Castelnuovo'', P.le Aldo Moro, 5 - 00185 - Roma, Italy; }
\email{chetan.vuppulury@uniroma1.it}
\begin{document}

\begin{abstract}
 We investigate invertible projective representations and their 2-categorical analogues using the language of TQFTs with defects. The main result is a freeness property for invertible projective representatios. While trivial in the 1-categorical setting, this result becomes interesting for 2-representations: as an application, only relying only on invertibility of Clifford algebras and Fock bimodules in the Morita 2-category of super vector spaces we recover Ludewig--Roos' result that the Clifford/Fock construction is a projective 2-representation of the category of Lagrangian correspondences.
  \end{abstract}

\maketitle

\setcounter{tocdepth}{1}
\tableofcontents

\section{Introduction}\label{sec:introduction}

The first rule of invertible projective representations is you don't talk of invertible projective representations. There are at least two good reasons for doing so. The first one is that invertible projective representations of a group $G$ are trivial. Indeed, a projective representation of $G$ is a group homomorphism $G\to PGL(V)$, where $V$ is some vector space over some field $\mathbb{K}$; if $V$ is invertible as an object in the monoidal category of vector spaces over $\mathbb{K}$, then $\dim_{\mathbb{K}}V=1$, which implies $PGL(V)$ is the trivial group. The second one is more interesting, and naturally extends to invertible projective representations of arbitrary categories with values in a symmetric monoidal category. 
A projective representation of $G$ can be seen as the datum of a collection of elements $\rho_g$ in $GL(V)$, one for each element $g$ in $G$, that satisfy the equation
\[
\rho_g\rho_h=\alpha_{g,h}\rho_{gh}
\]
for some $\mathbb{K}^\ast$-valued 2-cocycle $\alpha$ on $G$. Generally, i.e., when $\dim_{\mathbb{K}}V>1$, the existence of such a 2-cocycle given the elemens $\rho_g$ is quite a nontrivial condition. Yet, if $V$ is invertible, then one canonically has $GL(V)=\mathbb{K}^\ast$, and $\alpha$ is then obtained from the $\rho_g$ by setting
\[
\alpha_{g,h}=\rho_{gh}^{-1}\rho_g\rho_h.
\]
Notice that in this case the right hand side is automatically an element in $\mathbb{K}^\ast$, no condition is imposed. In other words, an invertible projective representation of $G$ is determined by a free assignment of elements  $\rho_g$ in $\mathbb{K}^\ast$. This freeness property 
 ultimately entirely relies on the invertibility of $V$ as object in the symmetric monoidal category $\Vect_\K$ and on the invertibility of the elements $\rho_{g}$ as morphisms from $V$ to $V$ in $\Vect_\K$. This is an instance of a much more general construction that we present in Section \ref{sec:invertible-1d} below: : we show that if $(\mathcal{V},\otimes)$ is a symmetric monoidal category with duals and $\mathcal{C}$ is an arbitrary 2-category, then freely assigning an invertible object $V_X$ in $\mathcal{V}$ with any object $X$ of $\mathcal{C}$ and an invertible morphism $\rho_f\colon V_X \to V_Y$ with any morphism $f\colon X \to Y$ in $\mathcal{V}$ determines a projective representation $\rho\colon \mathcal{C}\to \mathcal{V}/\!/\mathbf{B}\Pic(\Omega\mathcal{V})$. This is proved by making use of the language of 1-dimensional TQFTs with defects.
\footnote{The rules of topological manipulations of invertible defects appear to be widely known in the TQFT community. At the same time we have been unable to locate a systematic treatment in the literature. For invertible 1d defects, rules are described, e.g., in \cite{three-recipes}, while for non-invertible 1d defects they can be traced back at least to \cite{khovanov-heisenberg}. The rules for invertible 2d defects that we are going to extensively use in Section \ref{sec:invertible-2-rep}, on the other hand, appear not to have been systematically investigated in the literature. Yet, we are confident their use will be self-explanatory to the reader.
 We address the reader to \cite{atiyah,lurie-cobordism} for general background on topological quantum field theories, and to \cite{carqueville-defects} for details on TQFTs with defects.} 
One could argue that the interest in this result is limited, due to aforementioned triviality of invertible projective representations, and that the use of  the language of 1-dimensional TQFTs with defects to establish it, though fancy, is an overkill since equational reasoning would work as well. This is indeed a justified criticism: the main reason for Section \ref{sec:invertible-1d} is to prepare the reader to Section\ref{sec:invertible-2-rep}, where the results are generalized to projective 2-representations, and the freeness theorem for invertible projective 2-representations:  if $(2\mathcal{V},\otimes)$ is a symmetric monoidal 2-category with duals and $\mathcal{C}$ is an arbitrary 2-category, then freely assigning an invertible object $V_X$ in $\mathcal{V}$ with any object $X$ of $\mathcal{C}$ and an invertible morphism $\rho_f\colon V_X \to V_Y$ with any morphism $f\colon X \to Y$ in $\mathcal{V}$ determines a projective 2-representation $\rho\colon \mathcal{C}\to 2\mathcal{V}/\!/\mathbf{B}\Pic(\Omega2\mathcal{V})$. To prove this we make use of the full power of  2-dimensional TQFTs with defects, allowing us to ``draw the algebra''. With one dimension more, the use of surfaces to visualize morphisms turns out to be essential, reducing quite hard to handle diagrammatic or equational proofs to simple geometrical manipulations. More importantly, once one moves from 1-representations to 2-representations, the result is no more trivial in terms of applications: as we show in Section \ref{sec:clifford-fock}, when applied to the Clifford/Fock construction, it reproduces a recent result by Ludewig--Roos realizing the Clifford/Fock construction as a projective 2-representation of the category of Lagrangian correspondences \cite{ludewig-roos}. The defect TQFT approach shows in particular that this result actually does not rely on the properties of Clifford algebras and Fock bimodules, but rather entirely relies on their invertibility as objects and 1-morphisms in the Morita 2-category $2\mathrm{superVect}_\K$, where $\K$ is either $\mathbb{R}$ or $\mathbb{C}$. 

An important consequence of the freeness result on invertible projective 2-representations is that it is very easy to produce them. This offers a possible strategy to produce a linear 2-representation starting with a projective one, as follows: assume $\rho$ is a projective 2-representation of $\mathcal{C}$ with 2-cocycle $l_\rho$; if we can find a category $\tilde{\mathcal{C}}$ together with a functor $p\colon \mathcal{\tilde{C}}\to \mathcal{C}$ and an assignment of invertible objects and morphisms in $2\mathcal{V}$ to objects and morphisms in  $\mathcal{\tilde{C}}$ in such a way that the 2-cocycle $l_\beta$ of the associated projective 2-representation $\beta$ is  equivalent to $(p^\ast l_\rho)^{-1}$, then $J=\beta\otimes \rho$ is a linear 2-representation of $\mathcal{\tilde{C}}$. When $\rho$ takes invertible values there's a trivial way of adopting this strategy: one simply takes $\tilde{\mathcal{C}}=\mathcal{C}$ and the assignment on $\tilde{\mathcal{C}}$ to be the inverse of $\rho$. This produces the trivial 2-representation.
But remarkably, even when $\rho$ takes invertible values there are nontrivial examples of this strategy. For instance, when $\mathcal{C}$ is the category of Lagrangian correspondences, one can take $\tilde{\mathcal{C}}$ to be the category of Lagrangian spans (or generalized Lagrangian correspondences), and recover the fact that by considering twisted Fock bimodules one turns the  Clifford/Fock projective 2-representation of Lagrangian correspondences into a linear 2-representation of Lagrangian spans. An independent proof of this fact has been very recently given by Ludewig in \cite{ludewig}; the statement was already present as the Gluing Lemma 2.3.14 in Stolz and Teichner's \cite{what-is-an-elliptic-object}. 

The terminology and notation we use for 2-representations throughout the article is hopefully self-explanatory. A possible reference is \cite{fiorenza-vuppulury-projective-representations}.

\vskip 1 cm
This article is based on C.V. PhD Thesis \cite{vuppulury}. We thank Matthias Ludewig, Christoph Schweigert, and Jim Stasheff  for very useful comments and suggestions. D.F.  was partially supported by 2023 Sapienza research grant ``Representation Theory and Applications", by 2024 Sapienza research grant ``Global, local and infinitesimal aspects of moduli spaces'', and by PRIN 2022 ``Moduli spaces and special varieties'' CUP B53D23009140006. D.F. is a member of the Gruppo Nazionale per le Strutture Algebriche, Geometriche e le loro Applicazioni.
(GNSAGA-INdAM).

\section{Invertible projective representations and 1d TQFTs with defects}\label{sec:invertible-1d} 

To state the main result of this section we need a preliminary definition.
\begin{definition}
 Let $\mathcal{C}$ be a category and $\mathcal{D}$ be a 1-category. A \emph{wannabe functor}\footnote{It is likely that an established terminology should exist for this kind of association, but we have so far not been able to locate it in the literature.} $\rho\colon \mathcal{C}\dashrightarrow \mathcal{D}$ is a map of simplicial sets $\rho\colon\mathrm{sk}_1(\mathcal{C})\to\mathrm{sk}_1(\mathcal{D})$, where $\mathrm{sk}_1$ is the 1-skeleton of a simplicial set.
 \end{definition}
 \begin{remark}
 Explicitly, a wannabe functor $\rho\colon \mathcal{C}\dashrightarrow \mathcal{D}$ consists
 of the following associations:
 \begin{itemize}
 \item an object $V_{X_i}$ in $\mathcal{D}$ for any object $X_i$ in $\mathcal{C}$    
 \item a morphism $\rho_{f_{ij}}$ in $\mathrm{Hom}_{\mathcal{D}}(V_{X_i},V_{X_j})$ for any morphism $f_{ij}\colon X_i\to X_j$ in $\mathcal{C}$,
 \end{itemize}
 such that $\rho_{\mathrm{id}_{X_i}}=\mathrm{id}_{V_{X_i}}$ for any object $X_i$ in $\mathcal{C}$.
\end{remark}
As it is manifest from the above remark, since $\mathcal{D}$ is a 1-category, a wannabe functor only misses satisfying the equation
\[
\rho_{f_{ik}}=\rho_{f_{jk}}\circ \rho_{f_{ij}}
\]
in order to be an actual functor. This is quite a strict condition, so a general wannabe functor has no hope to be an actual functor. Yet, under invertibility assumptions, any wannabe functor canonically extends to a projective representation. To state this properly let us give the following.
\begin{definition}
    Let $\mathcal{C}$ be a category and $\mathcal{V}$ be a symmetric monoidal 1-category. A wannabe functor $\rho\colon \mathcal{C}\dashrightarrow \mathcal{V}$ is called \emph{invertible} if it factors through the 1-skeleton $\mathrm{sk}_1(\Pic(\mathcal{V}))$ of the 2-group $\Pic(\mathcal{V})$, i.e., if 
    \begin{itemize}
     \item the object $V_{X_i}$ is invertible in $(\mathcal{V},\otimes)$ for any $X_i$ in $\mathcal{C}$;
     \item the morphism $\rho_{f_{ij}}\colon V_{x_i}\to V_{X_j}$ is invertible for every $f_{ij}\colon X_i\to X_j$ in $\mathcal{C}$.
    \end{itemize}
     \end{definition}
Then we have the following.
\begin{proposition}\label{prop:wannabe-is-proj-0}
    Let $\mathcal{V}$ be a symmetric monoidal 1-category, and let $\rho \colon \mathcal{C}\dashrightarrow \mathcal{V}$ be an invertible wannabe functor. Then $\rho$ canonically extends to a projective representation $\rho\colon \mathcal{C}\to \mathcal{V}/\!/\mathbf{B}\Pic(\Omega\mathcal{V})$. More precisely, if we set
\[
\alpha_{\Xi_{ijk}}=\tr(\rho_{f_{ik}}^{-1}\circ \rho_{f_{jk}}\circ\rho_{f_{ij}})
\]
for any 2-simplex in $\mathcal{C}$, then $\alpha$ is a 2-cocycle on $\mathcal{C}$ with values in $\Pic(\Omega\mathcal{V})=\mathrm{Aut}_{\mathcal{V}}(\mathbf{1}_{\mathcal{V}})$ and $\rho$ is a  projective representation with 2-cocycle $\alpha$, i.e., it satisfies  
\[
 \rho_{f_{ik}}\cdot \alpha_{\Xi_{ijk}}=\rho_{f_{jk}}\circ \rho_{f_{ij}}
\]
for any 2-simplex in $\mathcal{C}$.
\end{proposition}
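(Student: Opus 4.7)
The plan is to verify, in order, three things: (i) that $\alpha_{\Xi_{ijk}}$ is a well-defined element of $\Pic(\Omega\mathcal{V})=\mathrm{Aut}_{\mathcal{V}}(\mathbf{1}_{\mathcal{V}})$, (ii) that $\rho$ together with this $\alpha$ satisfies the projective representation equation, and (iii) that $\alpha$ is a 2-cocycle. All three rest on the standard fact that if $V$ is an invertible object of a symmetric monoidal 1-category $\mathcal{V}$, then the trace map $\tr\colon \mathrm{End}_{\mathcal{V}}(V)\to \mathrm{End}_{\mathcal{V}}(\mathbf{1}_{\mathcal{V}})$ is an isomorphism of monoids, with inverse $\lambda\mapsto \lambda\cdot\mathrm{id}_V$; in particular it restricts to an isomorphism of automorphism groups.

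For (i) and (ii), the invertibility of the morphisms $\rho_{f_{ij}}$, $\rho_{f_{jk}}$, $\rho_{f_{ik}}$ makes the composite $\rho_{f_{ik}}^{-1}\circ \rho_{f_{jk}}\circ \rho_{f_{ij}}$ an automorphism of the invertible object $V_{X_i}$. Hence there exists a unique $\alpha_{\Xi_{ijk}}\in \mathrm{Aut}_{\mathcal{V}}(\mathbf{1}_{\mathcal{V}})$ with
\[
\rho_{f_{ik}}^{-1}\circ \rho_{f_{jk}}\circ \rho_{f_{ij}}=\alpha_{\Xi_{ijk}}\cdot \mathrm{id}_{V_{X_i}},
\]
recovered explicitly as the trace in the statement. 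Composing on the left with $\rho_{f_{ik}}$ and using that scalar multiplication by elements of $\mathrm{End}_{\mathcal{V}}(\mathbf{1}_{\mathcal{V}})$ is central with respect to composition gives the projective representation equation.

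For (iii), fix a 3-simplex in $\mathcal{C}$, namely three composable morphisms $f_{ij}$, $f_{jk}$, $f_{kl}$ together with all the composites $f_{ik}$, $f_{jl}$, $f_{il}$. I would evaluate the triple composite $\rho_{f_{kl}}\circ \rho_{f_{jk}}\circ \rho_{f_{ij}}$ in two ways using associativity, applying the projective representation equation along the four 2-faces of the resulting tetrahedron. The two regroupings produce, respectively, $\alpha_{\Xi_{jkl}}\cdot \alpha_{\Xi_{ijl}}\cdot \rho_{f_{il}}$ and $\alpha_{\Xi_{ijk}}\cdot \alpha_{\Xi_{ikl}}\cdot \rho_{f_{il}}$; cancelling the invertible morphism $\rho_{f_{il}}$ yields the simplicial 2-cocycle identity $\alpha_{\Xi_{jkl}}\cdot \alpha_{\Xi_{ijl}}=\alpha_{\Xi_{ijk}}\cdot \alpha_{\Xi_{ikl}}$.

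In the 1d TQFT-with-defects picture favored by the paper, all three steps become transparent: the scalar $\alpha_{\Xi_{ijk}}$ is the evaluation of a circular 1-manifold decorated with three point defects $V_{X_i},V_{X_j},V_{X_k}$ joined by the three invertible defect lines $\rho_{f_{ij}},\rho_{f_{jk}},\rho_{f_{ik}}^{-1}$ (invertibility of $V_{X_i}$ being exactly what collapses such a circle to a scalar), and the cocycle identity is the evident equality of the two natural ways of decomposing the 1-skeleton of a tetrahedron into such triangular loops. The only real obstacle is bookkeeping in the non-strict setting: one must confirm that the chosen trace convention makes the assignment $\alpha$ land in the right group and that scalar insertions can be freely migrated through compositions, both of which follow from centrality of $\mathrm{End}_{\mathcal{V}}(\mathbf{1}_{\mathcal{V}})$ and naturality of the symmetry.
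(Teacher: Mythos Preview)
Your proof is correct. The three steps you outline are exactly the content of the paper's Lemmas~\ref{lem:is-invertible}, \ref{lem:is-2-cocycle} and \ref{lem:satisfies-equation-proj}, and your equational arguments for each are sound: the key ingredient is precisely that for an invertible object $V$ the trace $\tr\colon\mathrm{End}(V)\to\mathrm{End}(\mathbf{1}_{\mathcal{V}})$ is a monoid isomorphism, so scalars can be freely migrated through compositions.

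The route, however, is genuinely different from the paper's. The paper proves all three lemmas using the graphical calculus of 1d TQFTs with defects: $\alpha_{\Xi_{ijk}}$ is drawn as a tricoloured circle with three marked points, its inverse is exhibited pictorially, and both the cocycle identity and the projective equation are established by cutting and recombining such decorated circles, using invertibility of the objects $V_{X_i}$ to collapse connected components. Your argument is the direct equational one the paper itself alludes to in the Introduction as a viable alternative (``equational reasoning would work as well''). What your approach buys is brevity and transparency in the 1-categorical setting. What the paper's approach buys is the template for Section~\ref{sec:invertible-2-rep}: the graphical manipulations you sketch in your final paragraph are exactly what the paper carries out in detail, because in the 2-categorical case the analogous equational argument becomes unwieldy while the surface-with-defects pictures remain manageable. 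So your proof is a clean shortcut here, but it does not prepare the reader for the paper's main contribution the way the diagrammatic proof does.
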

\begin{proof}
The proof is split into the following Lemmas \ref{lem:is-invertible}, \ref{lem:is-2-cocycle} and \ref{lem:satisfies-equation-proj}.
\end{proof}
{
\begin{remark}
In the above Proposition, `tr' is the trace in the symmetric monoidal category $\mathcal{V}$. We implicitly used the fact that an invertible object in a symmetric monoidal category is automatically dualizable, with dual given by its monoidal inverse. See, e.g., \cite{ponto-shulman} for the general theory of duality and traces in symmetric monoidal categories.
\end{remark}
}

\begin{lemma}\label{lem:is-invertible}
The element   $\alpha_{\Xi_{ijk}}$ is an element in  $\Pic(\Omega\mathcal{V})$, i.e., it is an invertible endomorphism of the unit object $\mathbf{1}_{\mathcal{V}}$ 
\end{lemma}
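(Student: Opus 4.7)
The plan is to reduce the lemma to the standard fact that, for any invertible object $V$ of a symmetric monoidal category with duals, the trace
\[
\tr\colon \mathrm{End}_{\mathcal V}(V) \longrightarrow \mathrm{End}_{\mathcal V}(\mathbf{1}_{\mathcal V})
\]
is a bijection, and therefore sends invertible endomorphisms to invertible endomorphisms.

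First I would observe that, since $\rho_{f_{ij}}$, $\rho_{f_{jk}}$ and $\rho_{f_{ik}}$ are all invertible by the invertibility hypothesis on the wannabe functor $\rho$, their composite
\[
g_{ijk} \;:=\; \rho_{f_{ik}}^{-1}\circ \rho_{f_{jk}}\circ \rho_{f_{ij}} \;\in\; \mathrm{End}_{\mathcal V}(V_{X_i})
\]
is an invertible endomorphism of the invertible object $V_{X_i}$. It is therefore enough to show that the trace of any invertible endomorphism of an invertible object lies in $\mathrm{Aut}_{\mathcal V}(\mathbf{1}_{\mathcal V})=\Pic(\Omega\mathcal V)$.

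For this I would argue as follows. When $V$ is invertible with monoidal inverse $V^{-1}$, the functor $(-)\otimes V\colon \mathcal V\to \mathcal V$ is an equivalence with quasi-inverse $(-)\otimes V^{-1}$, and so induces a monoid bijection
\[
\phi\colon \mathrm{End}_{\mathcal V}(\mathbf{1}_{\mathcal V}) \xrightarrow{\;\sim\;}\mathrm{End}_{\mathcal V}(V), \qquad \lambda\longmapsto \lambda\otimes \mathrm{id}_V.
\]
Moreover, multiplicativity of the trace under tensor products gives $\dim V\cdot \dim V^{-1}=\dim(V\otimes V^{-1})=\mathrm{id}_{\mathbf{1}_{\mathcal V}}$, so $\dim V=\tr(\mathrm{id}_V)$ is an invertible element of $\mathrm{End}_{\mathcal V}(\mathbf{1}_{\mathcal V})$. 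Combining these ingredients with the identity $\tr\circ\phi=(\,\cdot\, \dim V)$ shows that $\tr$ is itself a bijection, which automatically restricts to a group isomorphism $\mathrm{Aut}_{\mathcal V}(V)\xrightarrow{\sim}\mathrm{Aut}_{\mathcal V}(\mathbf{1}_{\mathcal V})$. Applying this to $g_{ijk}$ yields the lemma.

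The main subtlety to keep an eye on is that both the invertibility of $\dim V$ and the bijectivity of $\tr$ are special features of invertible (as opposed to merely dualizable) objects; without them the argument would break down. In the defect-TQFT language used elsewhere in the paper, the lemma is intuitively the statement that a closed circle decorated with invertible 1d defects (and one of their inverses) evaluates to an invertible element of $\mathrm{End}_{\mathcal V}(\mathbf{1}_{\mathcal V})$, since each defect can be cancelled against its inverse using the isomorphism $V\otimes V^{-1}\cong \mathbf{1}_{\mathcal V}$.
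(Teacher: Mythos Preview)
Your argument is correct and gives a clean algebraic alternative to the paper's approach. One small imprecision: the trace map $\tr\colon\mathrm{End}_{\mathcal V}(V)\to\mathrm{End}_{\mathcal V}(\mathbf{1}_{\mathcal V})$ is \emph{not} a monoid homomorphism in general, so the phrase ``automatically restricts to a group isomorphism'' is not justified by bijectivity alone. What you actually have, and what suffices, is the factorization $\tr=(\,\cdot\,\dim V)\circ\phi^{-1}$: since $\phi^{-1}$ is a monoid isomorphism and multiplication by the invertible element $\dim V$ in the commutative monoid $\mathrm{End}_{\mathcal V}(\mathbf{1}_{\mathcal V})$ sends units to units, $\tr$ does carry $\mathrm{Aut}_{\mathcal V}(V)$ into $\mathrm{Aut}_{\mathcal V}(\mathbf{1}_{\mathcal V})$. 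With that adjustment the proof is complete.

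The paper takes a different, more constructive route. It works in the graphical calculus of 1d TQFTs with defects, representing $\alpha_{\Xi_{ijk}}$ as a decorated circle with three coloured arcs (one per $V_{X_\bullet}$) separated by three marked points (one per $\rho_{f_{\bullet\bullet}}^{\pm1}$), and then \emph{exhibits} an explicit inverse---essentially $\tr(g_{ijk}^{-1})$, drawn as the ``reversed'' circle with all signs flipped---and verifies that the product of the two circles reduces to the empty diagram by successively cancelling inverse pairs of defect points, using the invertibility of $V_{X_i}$ and then of $V_{X_j}$. Your approach is shorter and more conceptual; the paper's approach has the virtue of producing the inverse explicitly and, more importantly, of introducing and exercising the diagrammatic language on which the proofs of the remaining lemmas (the 2-cocycle identity and the projective-representation equation) and their 2-categorical analogues depend.
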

\begin{proof}
     To begin with, we make the graphical associations
\[
V_{X_i}\rightsquigarrow \raisebox{-.4cm}{
\begin{tikzpicture}
   \draw [red,thick,decoration={markings, mark=at position 0.5 with {\arrow{>}} },postaction={decorate},domain=-30:90] (0,-0.5) -- (0,0.5);
  \end{tikzpicture} }\quad;\quad 
  V_{X_j}\rightsquigarrow \raisebox{-.4cm}{
\begin{tikzpicture}
   \draw [blue,thick,decoration={markings, mark=at position 0.5 with {\arrow{>}} },postaction={decorate},domain=-30:90] (0,-0.5) -- (0,0.5);
  \end{tikzpicture} }\quad;\quad 
  V_{X_k}\rightsquigarrow \raisebox{-.4cm}{
\begin{tikzpicture}
   \draw [green,thick,decoration={markings, mark=at position 0.5 with {\arrow{>}} },postaction={decorate},domain=-30:90] (0,-0.5) -- (0,0.5);
  \end{tikzpicture} }
  \]
  \[
\rho_{f_{ij}}\rightsquigarrow \raisebox{-.4cm}{
\begin{tikzpicture}
   \draw [red,thick,decoration={markings, mark=at position 0.5 with {\arrow{>}} },postaction={decorate},domain=-30:90] (0,-0.5) -- (0,0);
   \draw [blue,thick,decoration={markings, mark=at position 0.7 with {\arrow{>}} },postaction={decorate},domain=-30:90] (0,0) -- (0,0.5);
   \node[black] at (0,0){\textbullet};
   \node[black] at (0.3,0){\footnotesize{$+$}};
  \end{tikzpicture} }\quad;\quad 
  \rho_{f_{jk}}\rightsquigarrow \raisebox{-.4cm}{
\begin{tikzpicture}
   \draw [blue,thick,decoration={markings, mark=at position 0.5 with {\arrow{>}} },postaction={decorate},domain=-30:90] (0,-0.5) -- (0,0);
   \draw [green,thick,decoration={markings, mark=at position 0.7 with {\arrow{>}} },postaction={decorate},domain=-30:90] (0,0) -- (0,0.5);
   \node[orange] at (0,0){\textbullet};
   \node[orange] at (0.3,0){\footnotesize{$+$}};
  \end{tikzpicture} }\quad;\quad 
  \rho_{f_{ik}}\rightsquigarrow \raisebox{-.4cm}{
\begin{tikzpicture}
   \draw [red,thick,decoration={markings, mark=at position 0.5 with {\arrow{>}} },postaction={decorate},domain=-30:90] (0,-0.5) -- (0,0);
   \draw [green,thick,decoration={markings, mark=at position 0.7 with {\arrow{>}} },postaction={decorate},domain=-30:90] (0,0) -- (0,0.5);
   \node[violet] at (0,0){\textbullet};
   \node[violet] at (0.3,0){\footnotesize{$+$}};
  \end{tikzpicture} }
  \]
    \[
\rho_{f_{ij}}^{-1}\rightsquigarrow \raisebox{-.4cm}{
\begin{tikzpicture}
   \draw [blue,thick,decoration={markings, mark=at position 0.5 with {\arrow{>}} },postaction={decorate},domain=-30:90] (0,-0.5) -- (0,0);
   \draw [red,thick,decoration={markings, mark=at position 0.7 with {\arrow{>}} },postaction={decorate},domain=-30:90] (0,0) -- (0,0.5);
   \node[black] at (0,0){\textbullet};
   \node[black] at (0.3,0){\footnotesize{$-$}};
  \end{tikzpicture} }\quad;\quad 
  \rho_{f_{jk}}^{-1}\rightsquigarrow \raisebox{-.4cm}{
\begin{tikzpicture}
   \draw [green,thick,decoration={markings, mark=at position 0.5 with {\arrow{>}} },postaction={decorate},domain=-30:90] (0,-0.5) -- (0,0);
   \draw [blue,thick,decoration={markings, mark=at position 0.7 with {\arrow{>}} },postaction={decorate},domain=-30:90] (0,0) -- (0,0.5);
   \node[orange] at (0,0){\textbullet};
   \node[orange] at (0.3,0){\footnotesize{$-$}};
  \end{tikzpicture} }\quad;\quad 
  \rho_{f_{ik}}^{-1}\rightsquigarrow \raisebox{-.4cm}{
\begin{tikzpicture}
   \draw [green,thick,decoration={markings, mark=at position 0.5 with {\arrow{>}} },postaction={decorate},domain=-30:90] (0,-0.5) -- (0,0);
   \draw [red,thick,decoration={markings, mark=at position 0.7 with {\arrow{>}} },postaction={decorate},domain=-30:90] (0,0) -- (0,0.5);
   \node[violet] at (0,0){\textbullet};
   \node[violet] at (0.3,0){\footnotesize{$-$}};
  \end{tikzpicture} }
  \]
  Then we have
\[
\alpha_{\Xi_{ijk}}=\raisebox{-.7cm}{
\begin{tikzpicture}
   \draw [red,thick,decoration={markings, mark=at position 0.5 with {\arrow{>}} },postaction={decorate},domain=-30:90] plot ({0.7*cos(\x)}, {0.7*sin(\x)});
   \draw [blue,thick,,decoration={markings, mark=at position 0.5 with {\arrow{>}} },postaction={decorate},domain=90:210] plot ({0.7*cos(\x)}, {0.7*sin(\x)});
   \draw [green,thick,,decoration={markings, mark=at position 0.5 with {\arrow{>}} },postaction={decorate},domain=210:330] plot ({0.7*cos(\x)}, {0.7*sin(\x)});
   \node[black] at ({0.7*cos(90}, {0.7*sin(90)}){\textbullet};
   \node[violet] at ({0.7*cos(-30)}, {0.7*sin(-30)}){\textbullet};
   \node[orange] at ({0.7*cos(210)}, {0.7*sin(210)}){\textbullet};
   \node[black] at ({0.95*cos(90}, {0.95*sin(90)}){\footnotesize{$+$}};
   \node[violet] at ({0.9*cos(-30)}, {0.9*sin(-30)}){\footnotesize{$-$}};
   \node[orange] at ({0.9*cos(210)}, {0.9*sin(210)}){\footnotesize{$+$}};
\end{tikzpicture}}
\]
We claim that the inverse of $\alpha_{\Xi_{ijk}}$ is
\[
\begin{tikzpicture}
   \draw [red,thick,decoration={markings, mark=at position 0.5 with {\arrow{>}} },postaction={decorate},domain=-30:90] plot ({0.7*cos(\x)}, {0.7*sin(\x)});
   \draw [green,thick,,decoration={markings, mark=at position 0.5 with {\arrow{>}} },postaction={decorate},domain=90:210] plot ({0.7*cos(\x)}, {0.7*sin(\x)});
   \draw [blue,thick,,decoration={markings, mark=at position 0.5 with {\arrow{>}} },postaction={decorate},domain=210:330] plot ({0.7*cos(\x)}, {0.7*sin(\x)});
   \node[violet] at ({0.7*cos(-30)}, {0.7*sin(-30)}){\textbullet};
   \node[black] at ({0.7*cos(90}, {0.7*sin(90)}){\textbullet};
   \node[orange] at ({0.7*cos(210)}, {0.7*sin(210)}){\textbullet};
   \node[black] at ({0.9*cos(-30)}, {0.9*sin(-30)}){\footnotesize{$-$}};
   \node[violet] at ({0.95*cos(90}, {0.95*sin(90)}){\footnotesize{$+$}};
   \node[orange] at ({0.9*cos(210)}, {0.9*sin(210)}){\footnotesize{$-$}};
\end{tikzpicture}
\]
To verify this we compute
\[
\raisebox{-.7cm}{
\begin{tikzpicture}
   \draw [red,thick,decoration={markings, mark=at position 0.5 with {\arrow{>}} },postaction={decorate},domain=-30:90] plot ({0.7*cos(\x)}, {0.7*sin(\x)});
   \draw [blue,thick,,decoration={markings, mark=at position 0.5 with {\arrow{>}} },postaction={decorate},domain=90:210] plot ({0.7*cos(\x)}, {0.7*sin(\x)});
   \draw [green,thick,,decoration={markings, mark=at position 0.5 with {\arrow{>}} },postaction={decorate},domain=210:330] plot ({0.7*cos(\x)}, {0.7*sin(\x)});
   \node[black] at ({0.7*cos(90}, {0.7*sin(90)}){\textbullet};
   \node[violet] at ({0.7*cos(-30)}, {0.7*sin(-30)}){\textbullet};
   \node[orange] at ({0.7*cos(210)}, {0.7*sin(210)}){\textbullet};
   \node[black] at ({0.95*cos(90}, {0.95*sin(90)}){\footnotesize{$+$}};
   \node[violet] at ({0.9*cos(-30)}, {0.9*sin(-30)}){\footnotesize{$-$}};
   \node[orange] at ({0.9*cos(210)}, {0.9*sin(210)}){\footnotesize{$+$}};
\end{tikzpicture}
\begin{tikzpicture}
   \draw [red,thick,decoration={markings, mark=at position 0.5 with {\arrow{>}} },postaction={decorate},domain=-30:90] plot ({0.7*cos(\x)}, {0.7*sin(\x)});
   \draw [green,thick,,decoration={markings, mark=at position 0.5 with {\arrow{>}} },postaction={decorate},domain=90:210] plot ({0.7*cos(\x)}, {0.7*sin(\x)});
   \draw [blue,thick,,decoration={markings, mark=at position 0.5 with {\arrow{>}} },postaction={decorate},domain=210:330] plot ({0.7*cos(\x)}, {0.7*sin(\x)});
   \node[violet] at ({0.7*cos(-30)}, {0.7*sin(-30)}){\textbullet};
   \node[black] at ({0.7*cos(90}, {0.7*sin(90)}){\textbullet};
   \node[orange] at ({0.7*cos(210)}, {0.7*sin(210)}){\textbullet};
   \node[black] at ({0.9*cos(-30)}, {0.9*sin(-30)}){\footnotesize{$-$}};
   \node[violet] at ({0.95*cos(90}, {0.95*sin(90)}){\footnotesize{$+$}};
   \node[orange] at ({0.9*cos(210)}, {0.9*sin(210)}){\footnotesize{$-$}};
\end{tikzpicture}
}
=
\raisebox{-.9cm}{\begin{tikzpicture}
   \draw [red,thick,decoration={markings, mark=at position 0.5 with {\arrow{>}} },postaction={decorate},domain=-30:90] plot ({0.7*cos(\x)}, {0.7*sin(\x)});
   \draw [blue,thick,,decoration={markings, mark=at position 0.5 with {\arrow{>}} },postaction={decorate},domain=90:210] plot ({0.7*cos(\x)}, {0.7*sin(\x)});
   \draw [green,thick,,decoration={markings, mark=at position 0.5 with {\arrow{>}} },postaction={decorate},domain=210:330] plot ({0.7*cos(\x)}, {0.7*sin(\x)});
   \node[black] at ({0.7*cos(90)}, {0.7*sin(90)}){\textbullet};
   \node[violet] at ({0.7*cos(330)}, {0.7*sin(330)}){\textbullet};
   \node[orange] at ({0.7*cos(210)}, {0.7*sin(210)}){\textbullet};
   \node[black] at (0,1){\footnotesize{$+$}};
   \node[violet] at (0.85,-0.4){\footnotesize{$-$}};
   \node[orange] at (-0.85,-0.5){\footnotesize{$+$}};
   \draw [red,thick,decoration={markings, mark=at position 0.5 with {\arrow{>}} },postaction={decorate},domain=110:230] plot ({1.6+0.7*cos(\x)}, {.5+0.7*sin(\x)});
   \draw [green,thick,,decoration={markings, mark=at position 0.5 with {\arrow{>}} },postaction={decorate},domain=230:350] plot ({1.6+0.7*cos(\x)}, {.5+0.7*sin(\x)});
   \draw [blue,thick,,decoration={markings, mark=at position 0.5 with {\arrow{>}} },postaction={decorate},domain=-10:110] plot ({1.6+0.7*cos(\x)}, {.5+0.7*sin(\x)});
   \node[black] at ({1.6+0.7*cos(110)}, {.5+0.7*sin(110)}){\textbullet};
   \node[violet] at ({1.6+0.7*cos(230)}, {.5+0.7*sin(230)}){\textbullet};
   \node[orange] at ({1.6+0.7*cos(350)}, {.5+0.7*sin(350)}){\textbullet};
   \node[black] at ({1.6+0.9*cos(110)}, {.5+0.9*sin(110)}){\footnotesize{$-$}};
   \node[violet] at ({1.6+0.9*cos(230}, {.5+0.9*sin(230)}){\footnotesize{$+$}};
   \node[orange] at ({1.6+0.9*cos(350)}, {.5+0.9*sin(350)}){\footnotesize{$-$}};
\end{tikzpicture}
}
\]
\[
=
\raisebox{-.9cm}{\begin{tikzpicture}
   \draw [blue,thick,decoration={markings, mark=at position 0.5 with {\arrow{>}} },postaction={decorate},domain=90:210] plot ({0.7*cos(\x)}, {0.7*sin(\x)});
   \draw [green,thick,decoration={markings, mark=at position 0.5 with {\arrow{>}} },postaction={decorate},domain=210:330] plot ({0.7*cos(\x)}, {0.7*sin(\x)});
   \draw [green,thick,decoration={markings, mark=at position 0.5 with {\arrow{>}} },postaction={decorate},domain=230:350] plot ({1.6+0.7*cos(\x)}, {.5+0.7*sin(\x)});
   \draw [blue,thick,decoration={markings, mark=at position 0.5 with {\arrow{>}} },postaction={decorate},domain=-10:110] plot ({1.6+0.7*cos(\x)}, {.5+0.7*sin(\x)});
   \draw [red,thick,decoration={markings, mark=at position 0.5 with {\arrow{>}} },postaction={decorate}] plot [smooth, tension=1] coordinates { ({0.7*cos(330)}, {0.7*sin(330)}) ({0.7*cos(335)}, {0.7*sin(335)}) (.9,0) ({1.6+0.7*cos(225)}, {.5+0.7*sin(225)}) ({1.6+0.7*cos(230)}, {.5+0.7*sin(230)})};
   \draw [red,thick,decoration={markings, mark=at position 0.5 with {\arrow{>}} },postaction={decorate}] plot [smooth, tension=1] coordinates { ({1.6+0.7*cos(110)}, {.5+0.7*sin(110)}) ({1.6+0.7*cos(120)}, {.5+0.7*sin(120)}) (.7,.5) ({0.7*cos(85)}, {0.7*sin(85)}) ({0.7*cos(90)}, {0.7*sin(90)})};
   \node[black] at ({0.7*cos(90)}, {0.7*sin(90)}){\textbullet};
   \node[violet] at ({0.7*cos(330)}, {0.7*sin(330)}){\textbullet};
   \node[orange] at ({0.7*cos(210)}, {0.7*sin(210)}){\textbullet};
   \node[black] at (0,1){\footnotesize{$+$}};
   \node[violet] at (0.85,-0.4){\footnotesize{$-$}};
   \node[orange] at (-0.85,-0.5){\footnotesize{$+$}};
   \node[black] at ({1.6+0.7*cos(110)}, {.5+0.7*sin(110)}){\textbullet};
   \node[violet] at ({1.6+0.7*cos(230)}, {.5+0.7*sin(230)}){\textbullet};
   \node[orange] at ({1.6+0.7*cos(350)}, {.5+0.7*sin(350)}){\textbullet};
   \node[black] at ({1.6+0.9*cos(110)}, {.5+0.9*sin(110)}){\footnotesize{$-$}};
   \node[violet] at ({1.6+0.9*cos(230}, {.5+0.9*sin(230)}){\footnotesize{$+$}};
   \node[orange] at ({1.6+0.9*cos(350)}, {.5+0.9*sin(350)}){\footnotesize{$-$}};
   \end{tikzpicture}
   }
 =  
 \raisebox{-.7cm}{
 \begin{tikzpicture}
   \draw [blue,thick,,decoration={markings, mark=at position 0.5 with {\arrow{>}} },postaction={decorate},domain=10:190] plot ({0.7*cos(\x)}, {0.7*sin(\x)});
   \draw [green,thick,,decoration={markings, mark=at position 0.5 with {\arrow{>}} },postaction={decorate},domain=190:370] plot ({0.7*cos(\x)}, {0.7*sin(\x)});
   \node[orange] at ({0.7*cos(10)}, {0.7*sin(10)}){\textbullet};
   \node[orange] at ({0.9*cos(10)}, {0.9*sin(10)}){\footnotesize{$-$}};
   \node[orange] at ({0.7*cos(190)}, {0.7*sin(190)}){\textbullet};
   \node[orange] at ({0.9*cos(190)}, {0.9*sin(190)}){\footnotesize{$+$}};
\end{tikzpicture}}
 =  
 \raisebox{-.7cm}{
 \begin{tikzpicture}
   \draw [blue,thick,,decoration={markings, mark=at position 0.5 with {\arrow{>}} },postaction={decorate},domain=0:360] plot ({0.7*cos(\x)}, {0.7*sin(\x)});
\end{tikzpicture}}
=\emptyset.
\]
Here we used the invertibility of $V_{X_i}$ in the {second} step and that of $V_{X_j}$ in the last step.
\end{proof}
\begin{remark}
 Apparently, the invertibility of $V_{X_k}$ did not play a role in the proof of Lemma \ref{lem:is-invertible}. But actually already in the definition of $\alpha_{\Xi_{ijk}}$ one uses that $\rho_{f_{ik}}$ is an invertible morphism from $V_{X_i}$ to $V_{X_k}$. So the invertibility of $V_{X_i}$, that is used in the proof of  Lemma \ref{lem:is-invertible}, implies that of $V_{X_k}$.  
\end{remark}
\begin{lemma}\label{lem:is-2-cocycle}
 The elements $\alpha_{\Xi_{ijk}}$ define a 2-cocycle on $\mathcal{C}$ with values in $\Pic(\Omega\mathcal{V})$, i.e., they satisfy the equation    
\[
\alpha_{\Xi_{ikl}}\alpha_{\Xi_{ijk}}=\alpha_{\Xi_{ijl}}\alpha_{\Xi_{jkl}},
\]
for any 3-simplex in $\mathcal{C}$.    
\end{lemma}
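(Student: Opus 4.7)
The proof will proceed in the graphical TQFT language introduced in Lemma \ref{lem:is-invertible}. Since each $\alpha_{\Xi_{ijk}}$ is an element of $\Pic(\Omega\mathcal{V}) = \mathrm{Aut}_{\mathcal{V}}(\mathbf{1}_{\mathcal{V}})$ and the trace is multiplicative under tensor products in a symmetric monoidal category, the product of two such elements is the TQFT amplitude of the disjoint union of the two corresponding 3-arc circles. Both sides of the cocycle equation are therefore represented as closed defect 1-manifolds consisting of two disjoint 3-arc circles, one for each of the two triangular faces involved.

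The plan is to deform both disjoint unions into the same single closed amplitude via a saddle move. On the left-hand side, the circles representing $\alpha_{\Xi_{ikl}}$ and $\alpha_{\Xi_{ijk}}$ share the colour running between $V_{X_i}$ and $V_{X_k}$: the first carries the defect $\rho_{f_{ik}}$, the second its inverse $\rho_{f_{ik}}^{-1}$. Applying the same topological manipulation used in the final collapse step of the proof of Lemma \ref{lem:is-invertible} to this $\rho_{f_{ik}}/\rho_{f_{ik}}^{-1}$ pair, the two disjoint circles merge into a single 4-arc circle decorated with the remaining four defects $\rho_{f_{ij}}, \rho_{f_{jk}}, \rho_{f_{kl}}, \rho_{f_{il}}^{-1}$, arranged in the cyclic order $i \to j \to k \to l \to i$ dictated by the boundary of the 3-simplex. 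On the right-hand side, the circles representing $\alpha_{\Xi_{ijl}}$ and $\alpha_{\Xi_{jkl}}$ instead share the $\rho_{f_{jl}}/\rho_{f_{jl}}^{-1}$ pair, and the analogous saddle move produces the very same 4-arc circle with the same defects in the same cyclic order.

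Both sides thus reduce to the common amplitude $\tr(\rho_{f_{il}}^{-1} \circ \rho_{f_{kl}} \circ \rho_{f_{jk}} \circ \rho_{f_{ij}}) \in \mathrm{Aut}_{\mathcal{V}}(\mathbf{1}_{\mathcal{V}})$, and the cocycle equation follows. The main point to verify is that the saddle move used here is valid: two arcs carrying an invertible defect $\rho_f$ and its inverse $\rho_f^{-1}$ can be cut and re-glued into a single uninterrupted arc, by inserting a cap-cup pair on the relevant invertible object and invoking $\rho_f^{-1} \circ \rho_f = \mathrm{id}$. This is exactly the topological move already justified at the end of the proof of Lemma \ref{lem:is-invertible}; the only new feature is that here the move \emph{fuses} two previously disjoint circles instead of \emph{cancelling} two arcs within one, but the underlying algebraic justification is identical.
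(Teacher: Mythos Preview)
Your proposal is correct and follows essentially the same route as the paper. The paper also reduces both sides to the common 4-arc amplitude $\tr(\rho_{f_{il}}^{-1}\circ\rho_{f_{kl}}\circ\rho_{f_{jk}}\circ\rho_{f_{ij}})$ via the same pair of moves: a saddle on an arc labelled by an invertible object (the paper uses $V_{X_i}$ on the left and $V_{X_j}$ on the right) followed by cancellation of the adjacent $\rho_{f_{ik}}^{\pm1}$, respectively $\rho_{f_{jl}}^{\pm1}$, defects; the only cosmetic difference is that the paper writes this as a single chain of equalities $\text{LHS}\to\text{4-arc}\to\text{RHS}$ rather than meeting in the middle.
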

\begin{proof}
 We add to our graphical associations the following ones: 
 \[
 V_{X_l}\rightsquigarrow \raisebox{-.4cm}{
\begin{tikzpicture}
   \draw [orange,thick,decoration={markings, mark=at position 0.5 with {\arrow{>}} },postaction={decorate},domain=-30:90] (0,-0.5) -- (0,0.5);
  \end{tikzpicture} }
  \quad;\quad
  \rho_{f_{kl}}\rightsquigarrow \raisebox{-.4cm}{
\begin{tikzpicture}
   \draw [green,thick,decoration={markings, mark=at position 0.5 with {\arrow{>}} },postaction={decorate},domain=-30:90] (0,-0.5) -- (0,0);
   \draw [orange,thick,decoration={markings, mark=at position 0.7 with {\arrow{>}} },postaction={decorate},domain=-30:90] (0,0) -- (0,0.5);
   \node[lightgray] at (0,0){\textbullet};
   \node[lightgray] at (0.3,0){\footnotesize{$+$}};
  \end{tikzpicture} }\quad;\quad 
  \rho_{f_{kl}}^{-1}\rightsquigarrow \raisebox{-.4cm}{
\begin{tikzpicture}
   \draw [orange,thick,decoration={markings, mark=at position 0.5 with {\arrow{>}} },postaction={decorate},domain=-30:90] (0,-0.5) -- (0,0);
   \draw [green,thick,decoration={markings, mark=at position 0.7 with {\arrow{>}} },postaction={decorate},domain=-30:90] (0,0) -- (0,0.5);
   \node[lightgray] at (0,0){\textbullet};
   \node[lightgray] at (0.3,0){\footnotesize{$-$}};
  \end{tikzpicture} }
  \quad;\quad
  \rho_{f_{il}}\rightsquigarrow \raisebox{-.4cm}{
\begin{tikzpicture}
   \draw [red,thick,decoration={markings, mark=at position 0.5 with {\arrow{>}} },postaction={decorate},domain=-30:90] (0,-0.5) -- (0,0);
   \draw [orange,thick,decoration={markings, mark=at position 0.7 with {\arrow{>}} },postaction={decorate},domain=-30:90] (0,0) -- (0,0.5);
   \node[green] at (0,0){\textbullet};
   \node[green] at (0.3,0){\footnotesize{$+$}};
  \end{tikzpicture} }\quad;\quad 
  \rho_{f_{il}}^{-1}\rightsquigarrow \raisebox{-.4cm}{
\begin{tikzpicture}
   \draw [orange,thick,decoration={markings, mark=at position 0.5 with {\arrow{>}} },postaction={decorate},domain=-30:90] (0,-0.5) -- (0,0);
   \draw [red,thick,decoration={markings, mark=at position 0.7 with {\arrow{>}} },postaction={decorate},domain=-30:90] (0,0) -- (0,0.5);
   \node[green] at (0,0){\textbullet};
   \node[green] at (0.3,0){\footnotesize{$-$}};
  \end{tikzpicture} }\quad;
  \]
  \[
   \rho_{f_{jl}}\rightsquigarrow \raisebox{-.4cm}{
\begin{tikzpicture}
   \draw [blue,thick,decoration={markings, mark=at position 0.5 with {\arrow{>}} },postaction={decorate},domain=-30:90] (0,-0.5) -- (0,0);
   \draw [orange,thick,decoration={markings, mark=at position 0.7 with {\arrow{>}} },postaction={decorate},domain=-30:90] (0,0) -- (0,0.5);
   \node[brown] at (0,0){\textbullet};
   \node[brown] at (0.3,0){\footnotesize{$+$}};
  \end{tikzpicture} }\quad;\quad 
  \rho_{f_{jl}}^{-1}\rightsquigarrow \raisebox{-.4cm}{
\begin{tikzpicture}
   \draw [orange,thick,decoration={markings, mark=at position 0.5 with {\arrow{>}} },postaction={decorate},domain=-30:90] (0,-0.5) -- (0,0);
   \draw [blue,thick,decoration={markings, mark=at position 0.7 with {\arrow{>}} },postaction={decorate},domain=-30:90] (0,0) -- (0,0.5);
   \node[brown] at (0,0){\textbullet};
   \node[brown] at (0.3,0){\footnotesize{$-$}};
  \end{tikzpicture} }\quad.
  \]
Then we compute
\[
\alpha_{\Xi_{ikl}}\alpha_{\Xi_{ijk}}=
\raisebox{-.7cm}{
\begin{tikzpicture}
   \draw [red,thick,decoration={markings, mark=at position 0.5 with {\arrow{>}} },postaction={decorate},domain=-30:90] plot ({0.7*cos(\x)}, {0.7*sin(\x)});
   \draw [green,thick,,decoration={markings, mark=at position 0.5 with {\arrow{>}} },postaction={decorate},domain=90:210] plot ({0.7*cos(\x)}, {0.7*sin(\x)});
   \draw [orange,thick,,decoration={markings, mark=at position 0.5 with {\arrow{>}} },postaction={decorate},domain=210:330] plot ({0.7*cos(\x)}, {0.7*sin(\x)});
   \node[violet] at ({0.7*cos(90}, {0.7*sin(90)}){\textbullet};
   \node[green] at ({0.7*cos(-30)}, {0.7*sin(-30)}){\textbullet};
   \node[lightgray] at ({0.7*cos(210)}, {0.7*sin(210)}){\textbullet};
   \node[violet] at ({0.95*cos(90}, {0.95*sin(90)}){\footnotesize{$+$}};
   \node[green] at ({0.9*cos(-30)}, {0.9*sin(-30)}){\footnotesize{$-$}};
   \node[lightgray] at ({0.9*cos(210)}, {0.9*sin(210)}){\footnotesize{$+$}};
\end{tikzpicture}}\,
\raisebox{-.7cm}{
\begin{tikzpicture}
   \draw [red,thick,decoration={markings, mark=at position 0.5 with {\arrow{>}} },postaction={decorate},domain=-30:90] plot ({0.7*cos(\x)}, {0.7*sin(\x)});
   \draw [blue,thick,,decoration={markings, mark=at position 0.5 with {\arrow{>}} },postaction={decorate},domain=90:210] plot ({0.7*cos(\x)}, {0.7*sin(\x)});
   \draw [green,thick,,decoration={markings, mark=at position 0.5 with {\arrow{>}} },postaction={decorate},domain=210:330] plot ({0.7*cos(\x)}, {0.7*sin(\x)});
   \node[black] at ({0.7*cos(90}, {0.7*sin(90)}){\textbullet};
   \node[violet] at ({0.7*cos(-30)}, {0.7*sin(-30)}){\textbullet};
   \node[orange] at ({0.7*cos(210)}, {0.7*sin(210)}){\textbullet};
   \node[black] at ({0.95*cos(90}, {0.95*sin(90)}){\footnotesize{$+$}};
   \node[violet] at ({0.9*cos(-30)}, {0.9*sin(-30)}){\footnotesize{$-$}};
   \node[orange] at ({0.9*cos(210)}, {0.9*sin(210)}){\footnotesize{$+$}};
\end{tikzpicture}}
=
\raisebox{-.9cm}{\begin{tikzpicture}
   \draw [red,thick,decoration={markings, mark=at position 0.5 with {\arrow{>}} },postaction={decorate},domain=-30:90] plot ({0.7*cos(\x)}, {0.7*sin(\x)});
   \draw [green,thick,,decoration={markings, mark=at position 0.5 with {\arrow{>}} },postaction={decorate},domain=90:210] plot ({0.7*cos(\x)}, {0.7*sin(\x)});
   \draw [orange,thick,,decoration={markings, mark=at position 0.5 with {\arrow{>}} },postaction={decorate},domain=210:330] plot ({0.7*cos(\x)}, {0.7*sin(\x)});
   \node[violet] at ({0.7*cos(90}, {0.7*sin(90)}){\textbullet};
   \node[green] at ({0.7*cos(-30)}, {0.7*sin(-30)}){\textbullet};
   \node[lightgray] at ({0.7*cos(210)}, {0.7*sin(210)}){\textbullet};
   \node[violet] at ({0.95*cos(90}, {0.95*sin(90)}){\footnotesize{$+$}};
   \node[green] at ({0.9*cos(-30)}, {0.9*sin(-30)}){\footnotesize{$-$}};
   \node[lightgray] at ({0.9*cos(210)}, {0.9*sin(210)}){\footnotesize{$+$}};
   \draw [red,thick,decoration={markings, mark=at position 0.5 with {\arrow{>}} },postaction={decorate},domain=110:230] plot ({1.6+0.7*cos(\x)}, {.5+0.7*sin(\x)});
   \draw [blue,thick,,decoration={markings, mark=at position 0.5 with {\arrow{>}} },postaction={decorate},domain=230:350] plot ({1.6+0.7*cos(\x)}, {.5+0.7*sin(\x)});
   \draw [green,thick,,decoration={markings, mark=at position 0.5 with {\arrow{>}} },postaction={decorate},domain=-10:110] plot ({1.6+0.7*cos(\x)}, {.5+0.7*sin(\x)});
   \node[violet] at ({1.6+0.7*cos(110)}, {.5+0.7*sin(110)}){\textbullet};
   \node[black] at ({1.6+0.7*cos(230)}, {.5+0.7*sin(230)}){\textbullet};
   \node[orange] at ({1.6+0.7*cos(350)}, {.5+0.7*sin(350)}){\textbullet};
   \node[violet] at ({1.6+0.9*cos(110)}, {.5+0.9*sin(110)}){\footnotesize{$-$}};
   \node[black] at ({1.6+0.9*cos(230}, {.5+0.9*sin(230)}){\footnotesize{$+$}};
   \node[orange] at ({1.6+0.9*cos(350)}, {.5+0.9*sin(350)}){\footnotesize{$+$}};
\end{tikzpicture}
}
\]
\[
=
\raisebox{-.9cm}{\begin{tikzpicture}
   \draw [green,thick,decoration={markings, mark=at position 0.5 with {\arrow{>}} },postaction={decorate},domain=90:210] plot ({0.7*cos(\x)}, {0.7*sin(\x)});
   \draw [orange,thick,decoration={markings, mark=at position 0.5 with {\arrow{>}} },postaction={decorate},domain=210:330] plot ({0.7*cos(\x)}, {0.7*sin(\x)});
   \draw [blue,thick,decoration={markings, mark=at position 0.5 with {\arrow{>}} },postaction={decorate},domain=230:350] plot ({1.6+0.7*cos(\x)}, {.5+0.7*sin(\x)});
   \draw [green,thick,decoration={markings, mark=at position 0.5 with {\arrow{>}} },postaction={decorate},domain=-10:110] plot ({1.6+0.7*cos(\x)}, {.5+0.7*sin(\x)});
   \draw [red,thick,decoration={markings, mark=at position 0.5 with {\arrow{>}} },postaction={decorate}] plot [smooth, tension=1] coordinates { ({0.7*cos(330)}, {0.7*sin(330)}) ({0.7*cos(335)}, {0.7*sin(335)}) (.9,0) ({1.6+0.7*cos(225)}, {.5+0.7*sin(225)}) ({1.6+0.7*cos(230)}, {.5+0.7*sin(230)})};
   \draw [red,thick,decoration={markings, mark=at position 0.5 with {\arrow{>}} },postaction={decorate}] plot [smooth, tension=1] coordinates { ({1.6+0.7*cos(110)}, {.5+0.7*sin(110)}) ({1.6+0.7*cos(120)}, {.5+0.7*sin(120)}) (.7,.5) ({0.7*cos(85)}, {0.7*sin(85)}) ({0.7*cos(90)}, {0.7*sin(90)})};
   \node[violet] at ({0.7*cos(90)}, {0.7*sin(90)}){\textbullet};
   \node[green] at ({0.7*cos(330)}, {0.7*sin(330)}){\textbullet};
   \node[lightgray] at ({0.7*cos(210)}, {0.7*sin(210)}){\textbullet};
   \node[violet] at ({1*cos(90)}, {1*sin(90)}){\footnotesize{$+$}};
   \node[green] at ({0.9*cos(330)}, {0.9*sin(330)}){\footnotesize{$-$}};
   \node[lightgray] at ({0.9*cos(210)}, {0.9*sin(210)}){\footnotesize{$+$}};
   \node[violet] at ({1.6+0.7*cos(110)}, {.5+0.7*sin(110)}){\textbullet};
   \node[black] at ({1.6+0.7*cos(230)}, {.5+0.7*sin(230)}){\textbullet};
   \node[orange] at ({1.6+0.7*cos(350)}, {.5+0.7*sin(350)}){\textbullet};
   \node[violet] at ({1.6+0.9*cos(110)}, {.5+0.9*sin(110)}){\footnotesize{$-$}};
   \node[black] at ({1.6+0.9*cos(230}, {.5+0.9*sin(230)}){\footnotesize{$+$}};
   \node[orange] at ({1.6+0.9*cos(350)}, {.5+0.9*sin(350)}){\footnotesize{$+$}};
   \end{tikzpicture}
   }
=
\raisebox{-.9cm}{\begin{tikzpicture}
   \draw [green,thick,domain=90:210] plot ({0.7*cos(\x)}, {0.7*sin(\x)});
   \draw [orange,thick,decoration={markings, mark=at position 0.5 with {\arrow{>}} },postaction={decorate},domain=210:330] plot ({0.7*cos(\x)}, {0.7*sin(\x)});
   \draw [blue,thick,decoration={markings, mark=at position 0.5 with {\arrow{>}} },postaction={decorate},domain=230:350] plot ({1.6+0.7*cos(\x)}, {.5+0.7*sin(\x)});
   \draw [green,thick,domain=-10:100] plot ({1.6+0.7*cos(\x)}, {.5+0.7*sin(\x)});
   \draw [red,thick,decoration={markings, mark=at position 0.5 with {\arrow{>}} },postaction={decorate}] plot [smooth, tension=1] coordinates { ({0.7*cos(330)}, {0.7*sin(330)}) ({0.7*cos(335)}, {0.7*sin(335)}) (.9,0) ({1.6+0.7*cos(225)}, {.5+0.7*sin(225)}) ({1.6+0.7*cos(230)}, {.5+0.7*sin(230)})};
   \draw [green,thick,decoration={markings, mark=at position 0.5 with {\arrow{>}} },postaction={decorate}] plot [smooth, tension=1] coordinates { ({1.6+0.7*cos(100)}, {.5+0.7*sin(100)}) ({1.4+0.7*cos(120)}, {.43+0.7*sin(120)}) (.7,.5) ({0.1+0.7*cos(85)}, {0.67*sin(85)}) ({0.7*cos(88)}, {0.7*sin(88)})({0.7*cos(90)}, {0.7*sin(90)}) };
   \node[green] at ({0.7*cos(330)}, {0.7*sin(330)}){\textbullet};
   \node[lightgray] at ({0.7*cos(210)}, {0.7*sin(210)}){\textbullet};
   \node[green] at ({0.9*cos(330)}, {0.9*sin(330)}){\footnotesize{$-$}};
   \node[lightgray] at ({0.9*cos(210)}, {0.9*sin(210)}){\footnotesize{$+$}};
   \node[black] at ({1.6+0.7*cos(230)}, {.5+0.7*sin(230)}){\textbullet};
   \node[orange] at ({1.6+0.7*cos(350)}, {.5+0.7*sin(350)}){\textbullet};
   \node[black] at ({1.6+0.9*cos(230}, {.5+0.9*sin(230)}){\footnotesize{$+$}};
   \node[orange] at ({1.6+0.9*cos(350)}, {.5+0.9*sin(350)}){\footnotesize{$+$}};
   \end{tikzpicture}
   } 
=   
\raisebox{-.7cm}{
\begin{tikzpicture}
   \draw [red,thick,decoration={markings, mark=at position 0.5 with {\arrow{>}} },postaction={decorate},domain=225:315] plot ({cos(\x)}, {sin(\x)});
   \draw [blue,thick,,decoration={markings, mark=at position 0.5 with {\arrow{>}} },postaction={decorate},domain=-45:45] plot ({cos(\x)}, {sin(\x)});
   \draw [green,thick,decoration={markings, mark=at position 0.5 with {\arrow{>}} },postaction={decorate},domain=45:135] plot ({cos(\x)}, {sin(\x)});
   \draw [orange,thick,,decoration={markings, mark=at position 0.5 with {\arrow{>}} },postaction={decorate},domain=135:225] plot ({cos(\x)}, {sin(\x)});
   \node[orange] at ({cos(45}, {sin(45)}){\textbullet};
   \node[black] at ({cos(-45)}, {sin(-45)}){\textbullet};
   \node[green] at ({cos(225)}, {sin(225)}){\textbullet};
   \node[lightgray] at ({cos(135)}, {sin(135)}){\textbullet};
   \node[orange] at ({1.2*cos(45}, {1.2*sin(45)}){\footnotesize{$+$}};
   \node[black] at ({1.2*cos(-45)}, {1.2*sin(-45)}){\footnotesize{$+$}};
   \node[green] at ({1.2*cos(225)}, {1.2*sin(225)}){\footnotesize{$-$}};
   \node[lightgray] at ({1.2*cos(135)}, {1.2*sin(135)}){\footnotesize{$+$}};
\end{tikzpicture}}
\]
\[
=\raisebox{-.9cm}{\begin{tikzpicture}
   \draw [orange,thick,domain=90:210] plot ({0.7*cos(\x)}, {0.7*sin(\x)});
   \draw [red,thick,decoration={markings, mark=at position 0.5 with {\arrow{>}} },postaction={decorate},domain=210:330] plot ({0.7*cos(\x)}, {0.7*sin(\x)});
   \draw [green,thick,decoration={markings, mark=at position 0.5 with {\arrow{>}} },postaction={decorate},domain=230:350] plot ({1.6+0.7*cos(\x)}, {.5+0.7*sin(\x)});
   \draw [orange,thick,domain=-10:100] plot ({1.6+0.7*cos(\x)}, {.5+0.7*sin(\x)});
   \draw [blue,thick,decoration={markings, mark=at position 0.5 with {\arrow{>}} },postaction={decorate}] plot [smooth, tension=1] coordinates { ({0.7*cos(330)}, {0.7*sin(330)}) ({0.7*cos(335)}, {0.7*sin(335)}) (.9,0) ({1.6+0.7*cos(225)}, {.5+0.7*sin(225)}) ({1.6+0.7*cos(230)}, {.5+0.7*sin(230)})};
   \draw [orange,thick,decoration={markings, mark=at position 0.5 with {\arrow{>}} },postaction={decorate}] plot [smooth, tension=1] coordinates { ({1.6+0.7*cos(100)}, {.5+0.7*sin(100)}) ({1.4+0.7*cos(120)}, {.43+0.7*sin(120)}) (.7,.5) ({0.1+0.7*cos(85)}, {0.67*sin(85)}) ({0.7*cos(88)}, {0.7*sin(88)})({0.7*cos(90)}, {0.7*sin(90)}) };
   \node[black] at ({0.7*cos(330)}, {0.7*sin(330)}){\textbullet};
   \node[green] at ({0.7*cos(210)}, {0.7*sin(210)}){\textbullet};
   \node[black] at ({0.9*cos(330)}, {0.9*sin(330)}){\footnotesize{$+$}};
   \node[green] at ({0.9*cos(210)}, {0.9*sin(210)}){\footnotesize{$-$}};
   \node[orange] at ({1.6+0.7*cos(230)}, {.5+0.7*sin(230)}){\textbullet};
   \node[lightgray] at ({1.6+0.7*cos(350)}, {.5+0.7*sin(350)}){\textbullet};
   \node[orange] at ({1.6+0.9*cos(230}, {.5+0.9*sin(230)}){\footnotesize{$+$}};
   \node[lightgray] at ({1.6+0.9*cos(350)}, {.5+0.9*sin(350)}){\footnotesize{$+$}};
   \end{tikzpicture}
   } 
   =
\raisebox{-.9cm}{\begin{tikzpicture}
   \draw [orange,thick,decoration={markings, mark=at position 0.5 with {\arrow{>}} },postaction={decorate},domain=90:210] plot ({0.7*cos(\x)}, {0.7*sin(\x)});
   \draw [red,thick,decoration={markings, mark=at position 0.5 with {\arrow{>}} },postaction={decorate},domain=210:330] plot ({0.7*cos(\x)}, {0.7*sin(\x)});
   \draw [green,thick,decoration={markings, mark=at position 0.5 with {\arrow{>}} },postaction={decorate},domain=230:350] plot ({1.6+0.7*cos(\x)}, {.5+0.7*sin(\x)});
   \draw [orange,thick,decoration={markings, mark=at position 0.5 with {\arrow{>}} },postaction={decorate},domain=-10:110] plot ({1.6+0.7*cos(\x)}, {.5+0.7*sin(\x)});
   \draw [blue,thick,decoration={markings, mark=at position 0.5 with {\arrow{>}} },postaction={decorate}] plot [smooth, tension=1] coordinates { ({0.7*cos(330)}, {0.7*sin(330)}) ({0.7*cos(335)}, {0.7*sin(335)}) (.9,0) ({1.6+0.7*cos(225)}, {.5+0.7*sin(225)}) ({1.6+0.7*cos(230)}, {.5+0.7*sin(230)})};
   \draw [blue,thick,decoration={markings, mark=at position 0.5 with {\arrow{>}} },postaction={decorate}] plot [smooth, tension=1] coordinates { ({1.6+0.7*cos(110)}, {.5+0.7*sin(110)}) ({1.6+0.7*cos(120)}, {.5+0.7*sin(120)}) (.7,.5) ({0.7*cos(85)}, {0.7*sin(85)}) ({0.7*cos(90)}, {0.7*sin(90)})};
   \node[brown] at ({0.7*cos(90)}, {0.7*sin(90)}){\textbullet};
   \node[black] at ({0.7*cos(330)}, {0.7*sin(330)}){\textbullet};
   \node[green] at ({0.7*cos(210)}, {0.7*sin(210)}){\textbullet};
   \node[brown] at ({1*cos(90)}, {1*sin(90)}){\footnotesize{$+$}};
   \node[black] at ({0.9*cos(330)}, {0.9*sin(330)}){\footnotesize{$+$}};
   \node[green] at ({0.9*cos(210)}, {0.9*sin(210)}){\footnotesize{$-$}};
   \node[brown] at ({1.6+0.7*cos(110)}, {.5+0.7*sin(110)}){\textbullet};
   \node[orange] at ({1.6+0.7*cos(230)}, {.5+0.7*sin(230)}){\textbullet};
   \node[lightgray] at ({1.6+0.7*cos(350)}, {.5+0.7*sin(350)}){\textbullet};
   \node[brown] at ({1.6+0.9*cos(110)}, {.5+0.9*sin(110)}){\footnotesize{$-$}};
   \node[orange] at ({1.6+0.9*cos(230}, {.5+0.9*sin(230)}){\footnotesize{$+$}};
   \node[lightgray] at ({1.6+0.9*cos(350)}, {.5+0.9*sin(350)}){\footnotesize{$+$}};
   \end{tikzpicture}
   }
\]
\[
=\raisebox{-.9cm}{\begin{tikzpicture}
   \draw [blue,thick,decoration={markings, mark=at position 0.5 with {\arrow{>}} },postaction={decorate},domain=-30:90] plot ({0.7*cos(\x)}, {0.7*sin(\x)});
   \draw [orange,thick,,decoration={markings, mark=at position 0.5 with {\arrow{>}} },postaction={decorate},domain=90:210] plot ({0.7*cos(\x)}, {0.7*sin(\x)});
   \draw [red,thick,,decoration={markings, mark=at position 0.5 with {\arrow{>}} },postaction={decorate},domain=210:330] plot ({0.7*cos(\x)}, {0.7*sin(\x)});
   \node[brown] at ({0.7*cos(90}, {0.7*sin(90)}){\textbullet};
   \node[black] at ({0.7*cos(-30)}, {0.7*sin(-30)}){\textbullet};
   \node[green] at ({0.7*cos(210)}, {0.7*sin(210)}){\textbullet};
   \node[brown] at ({0.95*cos(90}, {0.95*sin(90)}){\footnotesize{$+$}};
   \node[black] at ({0.9*cos(-30)}, {0.9*sin(-30)}){\footnotesize{$+$}};
   \node[green] at ({0.9*cos(210)}, {0.9*sin(210)}){\footnotesize{$-$}};
   \draw [blue,thick,decoration={markings, mark=at position 0.5 with {\arrow{>}} },postaction={decorate},domain=110:230] plot ({1.6+0.7*cos(\x)}, {.5+0.7*sin(\x)});
   \draw [green,thick,,decoration={markings, mark=at position 0.5 with {\arrow{>}} },postaction={decorate},domain=230:350] plot ({1.6+0.7*cos(\x)}, {.5+0.7*sin(\x)});
   \draw [orange,thick,,decoration={markings, mark=at position 0.5 with {\arrow{>}} },postaction={decorate},domain=-10:110] plot ({1.6+0.7*cos(\x)}, {.5+0.7*sin(\x)});
   \node[brown] at ({1.6+0.7*cos(110)}, {.5+0.7*sin(110)}){\textbullet};
   \node[orange] at ({1.6+0.7*cos(230)}, {.5+0.7*sin(230)}){\textbullet};
   \node[lightgray] at ({1.6+0.7*cos(350)}, {.5+0.7*sin(350)}){\textbullet};
   \node[brown] at ({1.6+0.9*cos(110)}, {.5+0.9*sin(110)}){\footnotesize{$-$}};
   \node[orange] at ({1.6+0.9*cos(230}, {.5+0.9*sin(230)}){\footnotesize{$+$}};
   \node[lightgray] at ({1.6+0.9*cos(350)}, {.5+0.9*sin(350)}){\footnotesize{$+$}};
\end{tikzpicture}
}=
\raisebox{-.7cm}{
\begin{tikzpicture}
   \draw [red,thick,decoration={markings, mark=at position 0.5 with {\arrow{>}} },postaction={decorate},domain=-30:90] plot ({0.7*cos(\x)}, {0.7*sin(\x)});
   \draw [blue,thick,,decoration={markings, mark=at position 0.5 with {\arrow{>}} },postaction={decorate},domain=90:210] plot ({0.7*cos(\x)}, {0.7*sin(\x)});
   \draw [orange,thick,,decoration={markings, mark=at position 0.5 with {\arrow{>}} },postaction={decorate},domain=210:330] plot ({0.7*cos(\x)}, {0.7*sin(\x)});
   \node[black] at ({0.7*cos(90}, {0.7*sin(90)}){\textbullet};
   \node[green] at ({0.7*cos(-30)}, {0.7*sin(-30)}){\textbullet};
   \node[brown] at ({0.7*cos(210)}, {0.7*sin(210)}){\textbullet};
   \node[black] at ({0.95*cos(90}, {0.95*sin(90)}){\footnotesize{$+$}};
   \node[green] at ({0.9*cos(-30)}, {0.9*sin(-30)}){\footnotesize{$-$}};
   \node[brown] at ({0.9*cos(210)}, {0.9*sin(210)}){\footnotesize{$+$}};
\end{tikzpicture}}\,
\raisebox{-.7cm}{
\begin{tikzpicture}
   \draw [blue,thick,decoration={markings, mark=at position 0.5 with {\arrow{>}} },postaction={decorate},domain=-30:90] plot ({0.7*cos(\x)}, {0.7*sin(\x)});
   \draw [green,thick,,decoration={markings, mark=at position 0.5 with {\arrow{>}} },postaction={decorate},domain=90:210] plot ({0.7*cos(\x)}, {0.7*sin(\x)});
   \draw [orange,thick,,decoration={markings, mark=at position 0.5 with {\arrow{>}} },postaction={decorate},domain=210:330] plot ({0.7*cos(\x)}, {0.7*sin(\x)});
   \node[orange] at ({0.7*cos(90}, {0.7*sin(90)}){\textbullet};
   \node[brown] at ({0.7*cos(-30)}, {0.7*sin(-30)}){\textbullet};
   \node[lightgray] at ({0.7*cos(210)}, {0.7*sin(210)}){\textbullet};
   \node[orange] at ({0.95*cos(90}, {0.95*sin(90)}){\footnotesize{$+$}};
   \node[brown] at ({0.9*cos(-30)}, {0.9*sin(-30)}){\footnotesize{$-$}};
   \node[lightgray] at ({0.9*cos(210)}, {0.9*sin(210)}){\footnotesize{$+$}};
\end{tikzpicture}}
=\alpha_{\Xi_{ijl}}\alpha_{\Xi_{jkl}}.
\]

\end{proof}

\begin{lemma}\label{lem:satisfies-equation-proj}
The assignment $f_{ij}\mapsto \rho_{f_{ij}}$; $\Xi_{ijk}\mapsto \alpha_{\Xi_{ijk}}$ defines a projective representation $\rho\colon \mathcal{C}\to \mathcal{V}/\!/\mathbf{B}\Pic(\Omega\mathcal{V})$ with associated cocycle $\alpha_{\Xi_{ijk}}$, i.e., we have 
\[
\rho_{f_{jk}}\circ \rho_{f_{ij}} = \rho_{f_{ik}}\cdot \alpha_{\Xi_{ijk}}
\]
for any 2-simplex in $\mathcal{C}$.
\end{lemma}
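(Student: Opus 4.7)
The plan is to verify the identity by essentially the same graphical manipulation used in Lemma \ref{lem:is-invertible}, but run in reverse. The key conceptual point is that for an invertible object $V_{X_i}$, the trace map $\tr\colon \operatorname{End}(V_{X_i})\to\operatorname{End}(\mathbf{1}_\mathcal{V})$ is an isomorphism whose inverse is $a\mapsto a\cdot\mathrm{id}_{V_{X_i}}$; so the scalar $\alpha_{\Xi_{ijk}}$ literally equals $\rho_{f_{ik}}^{-1}\circ\rho_{f_{jk}}\circ\rho_{f_{ij}}$ viewed as an endomorphism of $V_{X_i}$ through this identification. All that remains is to display this identity as a topological move on the defect diagrams.

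Concretely, I would draw $\rho_{f_{ik}}\cdot\alpha_{\Xi_{ijk}}$ as the vertical red-to-green strand carrying the violet $+$ dot, placed next to the closed circle diagram representing $\alpha_{\Xi_{ijk}}$ (the same three-colored loop used in Lemma \ref{lem:is-invertible}). Since $\alpha_{\Xi_{ijk}}$ lives in $\operatorname{End}(\mathbf{1}_\mathcal{V})$, its placement in the ambient space is immaterial; I can isotope the loop so that its red segment runs parallel to the $\rho_{f_{ik}}$ strand. Now perform exactly the inverse of the snake move used in Lemma \ref{lem:is-invertible}: using the duality data for the invertible object $V_{X_i}$, connect the red arc of the loop to the lower half of the $\rho_{f_{ik}}$ strand by a cup-and-cap, creating a single red strand running from the bottom boundary up into the position previously occupied by the violet $+$ dot in the loop. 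By invertibility of $\rho_{f_{ik}}$ (graphically, the violet $+$ and violet $-$ dots are inverse defects on the red/green interface), the two violet dots annihilate one another, leaving a single red-blue-green composite strand with the black $+$ dot at the red/blue interface and the orange $+$ dot at the blue/green interface; this is precisely $\rho_{f_{jk}}\circ\rho_{f_{ij}}$.

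The main obstacle is bookkeeping: verifying that when the snake-move is applied, the surviving composite of dots at the interfaces is arranged as $\rho_{f_{jk}}\circ\rho_{f_{ij}}$ rather than some other contracted expression, and that the orientations of the strands match the prescribed reading of morphisms from bottom to top. Once this is checked against the color-and-sign conventions fixed in Lemma \ref{lem:is-invertible}, the proof is a one-line topological manipulation, and the only subtlety is noting that it uses invertibility of $V_{X_i}$ together with invertibility of the 1-morphism $\rho_{f_{ik}}$—both of which are guaranteed by the hypothesis that $\rho$ is an invertible wannabe functor.
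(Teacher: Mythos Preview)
Your proposal is correct and follows essentially the same approach as the paper, just run in the opposite direction: the paper starts from $\rho_{f_{jk}}\circ\rho_{f_{ij}}$, inserts the identity $\rho_{f_{ik}}\circ\rho_{f_{ik}}^{-1}$ near the top of the strand, and then uses the snake move for the invertible object $V_{X_i}$ to split the picture into the vertical $\rho_{f_{ik}}$ strand next to the closed $\alpha_{\Xi_{ijk}}$ loop, whereas you begin from $\rho_{f_{ik}}\cdot\alpha_{\Xi_{ijk}}$ and undo these same moves. One small slip: the loop carries a violet $-$ dot (for $\rho_{f_{ik}}^{-1}$), not a violet $+$; the violet $+$ sits on the $\rho_{f_{ik}}$ strand, and it is these two that annihilate after the snake move.
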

\begin{proof}
We compote
\[
\rho_{f_{jk}}\circ \rho_{f_{ij}}=
\raisebox{-2.1cm}{
\begin{tikzpicture}
\draw [red,thick,decoration={markings, mark=at position 0.5 with {\arrow{>}} },postaction={decorate},domain=-30:90] (0,-2) -- (0,-0.5);
   \draw [blue,thick,decoration={markings, mark=at position 0.5 with {\arrow{>}} },postaction={decorate},domain=-30:90] (0,-0.5) -- (0,0.5);
   \draw [green,thick,decoration={markings, mark=at position 0.7 with {\arrow{>}} },postaction={decorate},domain=-30:90] (0,0.5) -- (0,2);
   \node[orange] at (0,0.5){\textbullet};
   \node[orange] at (0.3,0.5){\footnotesize{$+$}};
   \node[black] at (0,-0.5){\textbullet};
   \node[black] at (0.3,-0.5){\footnotesize{$+$}};
  \end{tikzpicture} }
  =
\raisebox{-2.1cm}{
\begin{tikzpicture}
\draw [red,thick,decoration={markings, mark=at position 0.5 with {\arrow{>}} },postaction={decorate},domain=-30:90] (0,-2) -- (0,-0.5);
   \draw [blue,thick,decoration={markings, mark=at position 0.5 with {\arrow{>}} },postaction={decorate},domain=-30:90] (0,-0.5) -- (0,0.5);
   \draw [green,thick,decoration={markings, mark=at position 0.7 with {\arrow{>}} },postaction={decorate},domain=-30:90] (0,0.5) -- (0,1);
   \draw [red,thick,decoration={markings, mark=at position 0.7 with {\arrow{>}} },postaction={decorate},domain=-30:90] (0,1) -- (0,1.5);
   \draw [green,thick,decoration={markings, mark=at position 0.7 with {\arrow{>}} },postaction={decorate},domain=-30:90] (0,1.5) -- (0,2);
   \node[orange] at (0,0.5){\textbullet};
   \node[orange] at (0.3,0.5){\footnotesize{$+$}};
   \node[black] at (0,-0.5){\textbullet};
   \node[black] at (0.3,-0.5){\footnotesize{$+$}};
   \node[violet] at (0,1){\textbullet};
   \node[violet] at (0.3,1){\footnotesize{$-$}};
   \node[violet] at (0,1.5){\textbullet};
   \node[violet] at (0.3,1.5){\footnotesize{$+$}};
  \end{tikzpicture} }
  =
  \raisebox{-2.1cm}{
\begin{tikzpicture}
 \draw [red,thick,decoration={markings, mark=at position 0.6 with {\arrow{>}} },postaction={decorate}] plot [smooth, tension=.8] coordinates { (0,-2) (0,-1.5) (0.1,-.5) (0.5,0) (1,-1) (1.55,-0.5) (1.7,0)};
 \draw [blue,thick,decoration={markings, mark=at position 0.6 with {\arrow{>}} },postaction={decorate}] plot [smooth, tension=.8] coordinates { (1.7,0) (1.7,0.6)};
 \draw [green,thick,decoration={markings, mark=at position 0.6 with {\arrow{>}} },postaction={decorate}] plot [smooth, tension=.8] coordinates { (1.7,0.6) (1.7,1.2)};
 \draw [red,thick,decoration={markings, mark=at position 0.6 with {\arrow{>}} },postaction={decorate}] plot [smooth, tension=.8] coordinates { (1.7,1.2) (1.6,1.5) (1.2,1.7) (0.75,0.8) (0.3,0.4)  (0,1)      };
 \draw [green,thick,decoration={markings, mark=at position 0.6 with {\arrow{>}} },postaction={decorate}] plot [smooth, tension=.8] coordinates {  (0,1) (0,2)      };
   \node[orange] at (1.7,0.6){\textbullet};
   \node[orange] at (1.9,0.6){\footnotesize{$+$}};
   \node[black] at (1.7,0){\textbullet};
   \node[black] at (1.9,0){\footnotesize{$+$}};
   \node[violet] at (0,1){\textbullet};
   \node[violet] at (0.3,1){\footnotesize{$+$}};
   \node[violet] at (1.7,1.2){\textbullet};
   \node[violet] at (1.9,1.2){\footnotesize{$-$}};
  \end{tikzpicture} }
=
\raisebox{-2.1cm}{
\begin{tikzpicture}
\draw [red,thick,decoration={markings, mark=at position 0.5 with {\arrow{>}} },postaction={decorate},domain=-30:90] (0,-2) -- (0,0);
   \draw [green,thick,decoration={markings, mark=at position 0.7 with {\arrow{>}} },postaction={decorate},domain=-30:90] (0,0) -- (0,2);
   \node[violet] at (0,0){\textbullet};
   \node[violet] at (0.3,0){\footnotesize{$+$}};
  \end{tikzpicture} 
  }
  \,
  \raisebox{-.7cm}{
  \begin{tikzpicture}
   \draw [red,thick,decoration={markings, mark=at position 0.5 with {\arrow{>}} },postaction={decorate},domain=-30:90] plot ({0.7*cos(\x)}, {0.7*sin(\x)});
   \draw [blue,thick,,decoration={markings, mark=at position 0.5 with {\arrow{>}} },postaction={decorate},domain=90:210] plot ({0.7*cos(\x)}, {0.7*sin(\x)});
   \draw [green,thick,,decoration={markings, mark=at position 0.5 with {\arrow{>}} },postaction={decorate},domain=210:330] plot ({0.7*cos(\x)}, {0.7*sin(\x)});
   \node[black] at ({0.7*cos(90}, {0.7*sin(90)}){\textbullet};
   \node[violet] at ({0.7*cos(-30)}, {0.7*sin(-30)}){\textbullet};
   \node[orange] at ({0.7*cos(210)}, {0.7*sin(210)}){\textbullet};
   \node[black] at ({0.95*cos(90}, {0.95*sin(90)}){\footnotesize{$+$}};
   \node[violet] at ({0.9*cos(-30)}, {0.9*sin(-30)}){\footnotesize{$-$}};
   \node[orange] at ({0.9*cos(210)}, {0.9*sin(210)}){\footnotesize{$+$}};
\end{tikzpicture}
  }
  =
  \rho_{f_{ik}}\cdot \alpha_{\Xi_{ijk}}.
\]

\end{proof}

\section{Invertible projective 2-representations and 2d TQFTs with defects}\label{sec:invertible-2-rep}
Here,  using the formalism of 2d TQFTs with defects we prove the 2-categorical version of the results in Section \ref{sec:invertible-1d}.  As an application, in Section \ref{sec:clifford-fock} we will recover Ludewig--Roos' result that the Clifford/Fock construction naturally extends to a projective 2-representation of the category of (finite dimensional\footnote{Ludewig and Roos investigate also the infinite dimensional case; namely, Lagrangian correspondences in separable Hilbert spaces. Here, not to be bothered by convergence and closedness issues, we will content us with the finite dimensional case. See Remark \ref{rem:von-neumann} for additional details on the infinite dimensional case.}) Lagrangian correspondences and Ludewig's result that the twisted Clifford/Fock construction is a 2-representation of (finite dimensional) Lagrangian spans. All definitions and constructions will be carried on in an arbitrary symmetric monoidal 2-category $2\mathcal{V}$. In order to provide a certain degree of concreteness, and in view of the application given in Section \ref{sec:clifford-fock}, at each step we will specialize the statements to the case where $2\mathcal{V}$ is either the Morita 2-category $2\Vect_\K$ of algebras, bimodules and intertwiners, or its super-analogue $2\mathrm{s}\Vect_\K$, i.e., the Morita 2-category of superalgebras, superbimodules and intertwiners. The terminology for 2-representations and projective 2-representations adopted in this Section should be self-explanatory. Additional details can be found in \cite[Chapter 3]{vuppulury}

\bigskip

We begin with the immediate generalization to a 2-categorical context of the initial definitions and constructions from Section \ref{sec:invertible-1d}. The proofs of the main results will then be a 2-dimensional version of the proofs in Section \ref{sec:invertible-1d}, that can actually be seen as ``slices'' of the 2-dimensional proofs.
\begin{definition}
    Let $\mathcal{C}$ be a category and $\mathcal{D}$ be a 2-category. A wannabe functor $\rho\colon \mathcal{C}\dashrightarrow \mathcal{D}$ is a map of simplicial sets $\rho\colon\mathrm{sk}_1(\mathcal{C})\to\mathrm{sk}_1(\mathcal{D})$, where $\mathrm{sk}_1$ is the 1-skeleton of a simplicial set.
\end{definition}
As in the 1-categorical case,  a wannabe functor $\rho\colon\mathcal{C}\dashrightarrow \mathcal{D}$ explicitly consists of the following associations:
\begin{itemize}
    \item an object $A_{X_i}$ in $\mathcal{D}$ for any object $X_i$ in $\mathcal{C}$    
    \item a morphism $M_{f_{ij}}$ in $\mathrm{Hom}_{\mathcal{D}}(A_{X_i},A_{X_j})$ for any morphism $f_{ij}\colon X_i\to X_j$ in $\mathcal{C}$,
\end{itemize}
such that $M_{\mathrm{id}_{X_i}}=\mathrm{id}_{A_{X_i}}$ for any object $X_i$ in $\mathcal{C}$. Since $\mathcal{D}$ is a 2-category, a wannabe functor only misses the datum of 2-simplices 
\[
    \begin{tikzcd}
        A_{X_i}
        \arrow[rr, "M_{f_{ik}}"]\arrow[dr,"M_{f_{ij}}"']&
        {}&
        A_{X_k}
        \\
        {}&
        A_{X_j}\arrow[u,shorten <=3pt,shorten > =1pt,Rightarrow,"\lambda_{\Xi_{ijk}}"',pos=.5]\ar[ur,"M_{f_{jk}}"']&
        {}
        \\
    \end{tikzcd}
\]       
for any 2-simplex
\[
            \begin{tikzcd}
            X_i
                \arrow[rr, "f_{ik}"]\arrow[dr,"f_{ij}"']&{}&
                X_k\\
                {}&X_j\arrow[u,shorten <=3pt,shorten > =1pt,Rightarrow,"\Xi_{ijk}"',pos=.5]\ar[ur,"f_{jk}"']&{}\\
            \end{tikzcd}
        \]       
in $\mathcal{C}$, such that we have a 3-simplex
\[
\begin{tikzcd}[row sep={7.2em,between origins}, column sep={10em,between origins}, ampersand replacement=\&]
    \&
    A_{X_{k}}
    \arrow[rdd, "M_{f_{kl}}", bend left=20]
    \&\\\&
    A_{X_{j}}
    \arrow[u, "M_{f_{jk}}"{description}]
    \arrow[rd, "M_{f_{jl}}"'{description,name=f13}]
    \&\\
    A_{X_{i}}
    \arrow[ruu, "M_{f_{ik}}"{name=f02}, bend left=20]
    \arrow[rr, "M_{f_{il}}"'{name=f03}, bend right=20]
    \arrow[ru, "M_{f_{ij}}"{description}]
    \&\&
    A_{X_{l}}
    \arrow[from=2-2, to=f02, Rightarrow, shorten=1.0em, "\lambda_{\Xi_{ijk}}"description, pos=0.475]
    \arrow[from=1-2, to=f13, Rightarrow, yshift=-0.25em, xshift=0.75em, shorten=3.5em, "\lambda_{\Xi_{jkl}}"description, pos=0.475]
    \arrow[from=2-2, to=f03, Rightarrow, shorten=2.5em, "\lambda_{\Xi_{ijl}}"description, pos=0.475]
    \arrow[from=1-2, to=f03, bend left=23.5, yshift=+0.0em, xshift=0.0em, crossing over, Rightarrow, shorten=2.0em, "\hspace{+0.625em}\lambda_{\Xi_{ikl}}"description, pos=0.425, crossing over clearance=1.5ex]
\end{tikzcd}
\]
in $\mathcal{D}$ for any 3-simplex $\Upsilon_{ijkl}$ in $\mathcal{C}$,
to be an actual functor.
As in the 1-categorical case, this is quite a strict condition, so a general wannabe functor has no hope to be an actual functor. But again, as in the 1-categorical case, under invertibility assumptions, any wannabe functor canonically extends to a projective representation. More precisely, let $2\mathcal{V}$ be a symmetric monoidal 2-category, and let $\mathcal{V}=\Omega2\mathcal{V}$. As in the 1-categorical case, a wannabe functor $\rho\colon \mathcal{C}\dashrightarrow \mathcal{V}$ will be called invertible if it factors through the 1-skeleton $\mathrm{sk}_1(\Pic(2\mathcal{V}))$ of the 3-group $\Pic(2\mathcal{V})$, i.e., if 
    \begin{itemize}
     \item the object $A_{X_i}$ is invertible in $(2\mathcal{V},\otimes)$ for any $X_i$ in $\mathcal{C}$;
     \item the morphism $M_{f_{ij}}\colon A_{x_i}\to A_{X_j}$ is invertible for every $f_{ij}\colon X_i\to X_j$ in $\mathcal{C}$.
    \end{itemize}
Then we have the following result, whose proof{, informed of the use of associahedra in homotopy associative algebras \cite{stasheff},} will take the whole Section.
\begin{proposition}\label{prop:wannabe-is-projective}
    Let $2\mathcal{V}$ be a symmetric monoidal 2-category, and let $\rho \colon \mathcal{C}\dashrightarrow {2}\mathcal{V}$ be an invertible wannabe functor. Then $\rho$ canonically extends to a projective representation $\rho\colon \mathcal{C}\to 2\mathcal{V}/\!/\mathbf{B}\Pic(\mathcal{V})$, where $\mathcal{V}=\Omega2\mathcal{V}=\mathrm{End}(\mathbf{1}_{2\mathcal{V}})$. More precisely, if we set
\[
l_{\Xi_{ijk}}=\tr(M_{f_{ik}}^{-1}\circ M_{f_{jk}}\circ M_{f_{ij}})
\]
for any 2-simplex in $\mathcal{C}$, then $l$ is a 2-cocycle on $\mathcal{C}$ with values in $\Pic(\mathcal{V})$ and $\rho$ is a  projective representation with 2-cocycle $l$.
\end{proposition}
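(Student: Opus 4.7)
The plan is to mirror the three-step proof of Proposition \ref{prop:wannabe-is-proj-0}, with every 1-dimensional picture now replaced by a 2-dimensional one: objects $A_{X_i}$ of $2\mathcal{V}$ are drawn as colored 2d regions, invertible 1-morphisms $M_{f_{ij}}$ as colored defect curves separating the corresponding regions, and the invertibility witnesses for the $M_{f_{ij}}$ as $\pm$-marked 0-dimensional defects sitting at trivalent junctions. Under this dictionary, the formula $l_{\Xi_{ijk}}=\tr(M_{f_{ik}}^{-1}\circ M_{f_{jk}}\circ M_{f_{ij}})$ is drawn as a disk bearing three defect arcs meeting at a trivalent junction, with the boundary of the disk capped off; because the boundary is empty, this closed defect surface evaluates to an element of $\mathcal{V}=\Omega 2\mathcal{V}$. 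In short, the pictures of Section \ref{sec:invertible-1d} are recovered as the ``equatorial slices'' of the 2-dimensional pictures used here.

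With this graphical setup, the proof splits, as in the 1-categorical case, into three lemmas analogous to Lemmas \ref{lem:is-invertible}, \ref{lem:is-2-cocycle}, and \ref{lem:satisfies-equation-proj}. First, to verify that $l_{\Xi_{ijk}}$ is invertible in $\mathcal{V}$, I would draw the mirror triangle picture and glue the two disks along their common boundary, then use the bubble-removal rules for invertible 2d defects (which follow from the invertibility of the $A_{X_i}$ as objects of $2\mathcal{V}$ and of the $M_{f_{ij}}$ as 1-morphisms) to reduce the resulting closed defect surface to the trivial sphere, namely $\mathbf{1}_{\mathcal{V}}$. Second, for the 2-cocycle condition on a 3-simplex of $\mathcal{C}$, I would assemble the four triangular pictures corresponding to the four 2-faces of the tetrahedron into a single picture and exhibit both sides of the identity $l_{\Xi_{ikl}}l_{\Xi_{ijk}}\simeq l_{\Xi_{ijl}}l_{\Xi_{jkl}}$ as two different ways of dissecting it; the 2d defect moves translate it to a canonical isomorphism in $\Pic(\mathcal{V})$. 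Third, the 2-morphism $\lambda_{\Xi_{ijk}}$ filling the projective-representation 2-simplex is obtained by ``opening'' one edge of the triangle picture for $l_{\Xi_{ijk}}$, exactly as in Lemma \ref{lem:satisfies-equation-proj}.

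The main obstacle, and the reason this result is substantially deeper than its 1-categorical counterpart, is coherence: since $\Pic(\mathcal{V})$ is now a genuine 2-group rather than a group, the 2-cocycle condition holds only up to specified isomorphism, and these isomorphisms must themselves satisfy higher coherence relations on 4-simplices, 5-simplices, and so on. This is precisely where the footnote's reference to associahedra enters: the higher-dimensional Stasheff polytopes arise geometrically from the combinatorics of how invertible 2d defects can be slid and recombined, and the goal is to organize the 2d TQFT moves so that the resulting coherences are automatically those prescribed by the associahedron structure. Concretely, I would verify the pentagon-type identity on a 4-simplex by displaying both sides as two triangulations of the same defect-decorated sphere, appealing only to the invertibility of the objects and 1-morphisms involved; the higher coherences then follow from the same mechanism, because each defect manipulation we perform is itself reversible.
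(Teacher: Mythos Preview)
Your outline has the right graphical setup and correctly identifies coherence as the crux, but there are two genuine gaps. First, you overstate the coherence tower: since $\mathcal{V}=\Omega 2\mathcal{V}$ is a 1-category, $\Pic(\mathcal{V})$ is only a 2-group, and the data of a $\Pic(\mathcal{V})$-valued 2-cocycle terminates at 4-simplices --- there is nothing to check on ``5-simplices, and so on,'' and your appeal to reversibility to dispatch nonexistent higher coherences is a red herring. What must actually be verified is precisely two things: a pentagon identity for the cocycle isomorphisms $\phi_{\Upsilon_{ijkl}}$ on each 4-simplex, and a separate compatibility, on each 3-simplex, between the structure 2-morphisms $\lambda_{\Xi_{ijk}}$ of the projective representation and the $\phi$'s. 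Your sketch names only the first of these.

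Second, and more importantly, neither coherence is established by ``displaying both sides as two triangulations of the same defect-decorated sphere.'' The paper's mechanism is to introduce auxiliary invertible objects $m_{\Upsilon_{ijkl}}=\tr(M_{f_{il}}^{-1}\circ M_{f_{kl}}\circ M_{f_{jk}}\circ M_{f_{ij}})$ and $n_{\Sigma_{ijklm}}$, together with explicit pair-of-pants surfaces $\tau$ and $\kappa$ that merge tensor products of $l$'s into these. The isomorphism $\phi_{\Upsilon_{ijkl}}$ is then \emph{defined} as the composite $(\tau^{\mathrm{odd}})^{-1}\circ\tau^{\mathrm{even}}$ through $m_{\Upsilon_{ijkl}}$, and the pentagon is obtained from a ``pentacle'' diagram with $n_{\Sigma_{ijklm}}$ at the center, whose subdiagrams commute either by the definition of $\phi$ or by an elementary diffeomorphism of stacked pants. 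The second coherence is handled analogously via an explicit surface $\epsilon_{\Xi_{ijk}}$ (the ``dorade box'') and its four-term generalization $\epsilon_{\Upsilon_{ijkl}}$, factoring both routes through $m_{\Upsilon_{ijkl}}$; ``opening one edge'' does not capture this construction. These intermediate objects and the pants morphisms connecting them are the missing idea in your proposal: without them you have no device for comparing the two composites in either coherence condition.
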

\begin{remark}
When $2\mathcal{V}=2\Vect_\K$, we have 
\begin{align*}
\tr(M_{f_{ik}}^{-1}\circ M_{f_{jk}}\circ M_{f_{ij}})&=
\tr(M_{f_{ik}}^{-1}\otimes_{A_{X_k}} M_{f_{jk}}\otimes_{A_{X_j}} M_{f_{ij}})\\
&=\mathrm{Hom}_{(A_{X_i},A_{X_i})\text{-Mod}}(A_{X_i},M_{f_{ik}}^{-1}\otimes_{A_{X_k}} M_{f_{jk}}\otimes_{A_{X_j}} M_{f_{ij}})\\
&=
\mathrm{Hom}_{(A_{X_k},A_{X_i})\text{-Mod}}(M_{f_{ik}}, M_{f_{jk}}\otimes_{A_{X_j}} M_{f_{ij}})
\end{align*}
The same consideration applies when $2\mathcal{V}=2\mathrm{s}\Vect_K$.
\end{remark}
We begin by making the following graphical associations:

\[
A_{X_i}\rightsquigarrow \raisebox{-.3cm}{
\begin{tikzpicture}
\fill[red!40!white] (0,0) rectangle (.8,.8);
\end{tikzpicture} }\quad;\quad 
  A_{X_j}\rightsquigarrow \raisebox{-.3cm}{
\begin{tikzpicture}
\fill[blue!40!white] (0,0) rectangle (.8,.8);
\end{tikzpicture} }\quad;\quad 
  A_{X_k}\rightsquigarrow \raisebox{-.3cm}{
\begin{tikzpicture}
\fill[green!40!white] (0,0) rectangle (.8,.8);
\end{tikzpicture} }
  \]
  \[
M_{f_{ij}}\rightsquigarrow \raisebox{-.3cm}{
\begin{tikzpicture}
\fill[blue!40!white] (0,0) rectangle (0.4,.8);
\fill[red!40!white] (0.4,0) rectangle (0.8,.8);
\draw[black,thick,decoration={markings, mark=at position 0.5 with {\arrow{>}}},
        postaction={decorate}]  (0.4,0) -- (0.4,0.8);
\end{tikzpicture} }\quad;\quad 
  M_{f_{jk}}\rightsquigarrow \raisebox{-.3cm}{
\begin{tikzpicture}
\fill[green!40!white] (0,0) rectangle (0.4,.8);
\fill[blue!40!white] (0.4,0) rectangle (0.8,.8);
\draw[orange,thick,decoration={markings, mark=at position 0.5 with {\arrow{>}}},
        postaction={decorate}]  (0.4,0) -- (0.4,0.8);
\end{tikzpicture} }\quad;\quad 
  M_{f_{ik}}\rightsquigarrow \raisebox{-.3cm}{
\begin{tikzpicture}
\fill[green!40!white] (0,0) rectangle (0.4,.8);
\fill[red!40!white] (0.4,0) rectangle (0.8,.8);
\draw[black,thick,decoration={markings, mark=at position 0.5 with {\arrow{>}}},
        postaction={decorate}]  (0.4,0) -- (0.4,0.8);
\end{tikzpicture} }
  \]
    \[
M_{f_{ij}}^{-1}\rightsquigarrow \raisebox{-.3cm}{
\begin{tikzpicture}
\fill[red!40!white] (0,0) rectangle (0.4,.8);
\fill[blue!40!white] (0.4,0) rectangle (0.8,.8);
\draw[black,thick,decoration={markings, mark=at position 0.5 with {\arrow{<}}},
        postaction={decorate}]  (0.4,0) -- (0.4,0.8);
\end{tikzpicture} }\quad;\quad 
  M_{f_{jk}}^{-1}\rightsquigarrow \raisebox{-.3cm}{
\begin{tikzpicture}
\fill[blue!40!white] (0,0) rectangle (0.4,.8);
\fill[green!40!white] (0.4,0) rectangle (0.8,.8);
\draw[orange,thick,decoration={markings, mark=at position 0.5 with {\arrow{<}}},
        postaction={decorate}]  (0.4,0) -- (0.4,0.8);
\end{tikzpicture} }\quad;\quad 
  M_{f_{ik}}^{-1}\rightsquigarrow \raisebox{-.3cm}{
\begin{tikzpicture}
\fill[red!40!white] (0,0) rectangle (0.4,.8);
\fill[green!40!white] (0.4,0) rectangle (0.8,.8);
\draw[black,thick,decoration={markings, mark=at position 0.5 with {\arrow{<}}},
        postaction={decorate}]  (0.4,0) -- (0.4,0.8);
\end{tikzpicture} }
  \]
\begin{remark}
When $2\mathcal{V}=2\Vect_\K$ or $2\mathcal{V}=2\mathrm{s}\Vect_\K$ we have the following simple mnemonic trick for the direction of the arrow on the defect lines above: diagrams are read right to left and bottom to top; the defect line corresponding to $M_{f_{ij}}$ is up-going and has the colour $A_{X_i}$ on the right and the colour $A_{X_j}$ on the left: $M_{f_{ij}}$ is an $(A_{X_j},A_{X_i})$-module. A {planar $180$ degrees} rotation of the graphical element corresponds to taking the dual and so, in the invertible case, the inverse.
\end{remark}
The element $l_{\Xi_{ijk}}$ of $\mathcal{V}=\mathrm{End}(\mathbf{1}_{2\mathcal{V}})$ is then 
\[
l_{\Xi_{ijk}}=\quad\raisebox{-1.2cm}{\begin{tikzpicture}
\filldraw[fill=red!10!white, draw=red!10!white] (-2,0) arc
    [
        start angle=180,
        end angle=0,
        x radius=2,
        y radius =.8
    ]    -- ({2*cos(0)},{.8*sin(0)+1}) 
    arc
    [
        start angle=0,
        end angle=180,
        x radius=2,
        y radius =.8
    ]
     --cycle;
    \filldraw[fill=red!40!white, draw=red!40!white] (-2,0) arc
    [
        start angle=180,
        end angle=210,
        x radius=2,
        y radius =.8
    ]    -- ({2*cos(210)},{.8*sin(210)+1}) 
    arc
    [
        start angle=210,
        end angle=180,
        x radius=2,
        y radius =.8
    ]
     --cycle;
     \filldraw[fill=green!40!white, draw=green!40!white] ({2*cos(210)},{.8*sin(210)}) arc
    [
        start angle=210,
        end angle=270,
        x radius=2,
        y radius =.8
    ]    -- ({2*cos(270)},{.8*sin(270)+1}) 
    arc
    [
        start angle=270,
        end angle=210,
        x radius=2,
        y radius =.8
    ]
     --cycle;
     \filldraw[fill=blue!40!white, draw=blue!40!white] ({2*cos(270)},{.8*sin(270)}) arc
    [
        start angle=270,
        end angle=330,
        x radius=2,
        y radius =.8
    ]    -- ({2*cos(330)},{.8*sin(330)+1}) 
    arc
    [
        start angle=330,
        end angle=270,
        x radius=2,
        y radius =.8
    ]
     --cycle;
     \filldraw[fill=red!40!white, draw=red!40!white] ({2*cos(330)},{.8*sin(330)}) arc
    [
        start angle=330,
        end angle=360,
        x radius=2,
        y radius =.8
    ]    -- ({2*cos(360)},{.8*sin(360)+1}) 
    arc
    [
        start angle=360,
        end angle=330,
        x radius=2,
        y radius =.8
    ]
    --cycle;
    \draw[black,thick,decoration={markings, mark=at position 0.5 with {\arrow{<}}},
        postaction={decorate}]  ({2*cos(210)},{.8*sin(210)})  -- ({2*cos(210)},{.8*sin(210)+1}) ;
    \draw[orange,thick,decoration={markings, mark=at position 0.5 with {\arrow{>}}},
        postaction={decorate}]  ({2*cos(270)},{.8*sin(270)}) -- ({2*cos(270)},{.8*sin(270)+1}) ;
    \draw[black,thick,decoration={markings, mark=at position 0.5 with {\arrow{>}}},
        postaction={decorate}]  ({2*cos(330)},{.8*sin(330)}) -- ({2*cos(330)},{.8*sin(330)+1}) ;    
\end{tikzpicture}
}
\]
\begin{lemma}\label{lem:is-invertible2}
The element   $l_{\Xi_{ijk}}$ is an element in  $\Pic(\mathcal{V})$, i.e., it is an invertible object in the symmetric monoidal category $\mathcal{V}$.
\end{lemma}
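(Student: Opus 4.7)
The plan is to mimic the proof of Lemma \ref{lem:is-invertible} one dimension higher, replacing circles with arcs separating colored regions by disks with defect lines separating colored surface regions. The proposed inverse is the disk obtained by reversing the orientation of all three defect lines and relabelling the adjacent colors accordingly, i.e.\ the element $\tr(M_{f_{ij}}^{-1}\circ M_{f_{jk}}^{-1}\circ M_{f_{ik}})$ (up to cyclic rotation inside the trace). This is again an object of $\mathcal{V}=\mathrm{End}(\mathbf{1}_{2\mathcal{V}})$ by exactly the same construction used for $l_{\Xi_{ijk}}$, so it makes sense to ask whether their tensor product is isomorphic to the monoidal unit of $\mathcal{V}$.

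The computation proceeds in two stages, paralleling the two uses of invertibility in Lemma \ref{lem:is-invertible}. First, I would juxtapose the two disks and then, for each of the three defect lines, pair it with its oppositely-oriented partner on the other disk, using the invertibility of the corresponding 1-morphism $M_{f_{ij}}$, $M_{f_{jk}}$, $M_{f_{ik}}$ in $2\mathcal{V}$ to replace the pair by a cylindrical strip that can be pinched to nothing. This is the 2-dimensional analogue of the ``zig-zag'' step used to go from the second to the third diagram in the proof of Lemma \ref{lem:is-invertible}, and it reduces the picture to a single disk whose interior is partitioned into regions labelled by the invertible objects $A_{X_i}$, $A_{X_j}$, $A_{X_k}$ and bounded by circles where two such colors meet. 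In the second stage, I would use the invertibility of each $A_{X_i}$, $A_{X_j}$, $A_{X_k}$ as objects of $2\mathcal{V}$ to collapse each such bicolored annular region to a monochromatic one, iteratively shrinking until only the empty (uncolored) disk remains, which represents the identity in $\mathcal{V}$. The reverse composition is obtained verbatim by swapping the roles of the two disks.

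The main obstacle is simply the combinatorial bookkeeping of the surface diagrams: in the 1-dimensional setting the two invertibility assumptions intervene at clearly separated steps (cancelling a defect point versus closing off a monochromatic interval), whereas in dimension 2 the cancellation of a pair of inverse defect lines produces a bicolored cylinder whose subsequent contraction already requires invertibility of the ambient objects, so the two layers of invertibility must be orchestrated carefully. Conceptually, however, nothing new happens beyond what is already in Lemma \ref{lem:is-invertible}: its diagrammatic manipulations can literally be read as the horizontal slice at the boundary of the 2-dimensional manipulations needed here, and this slicing perspective, together with the coherence of duals in a symmetric monoidal 2-category, is what makes the argument go through rigorously.
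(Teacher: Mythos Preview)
Your proposal is correct and matches the paper's approach: the paper's entire proof is the single sentence ``Identical to the proof of Lemma~\ref{lem:is-invertible},'' and your elaboration of what that means in one dimension higher is exactly the intended reading, including the slicing remark at the end. One small point of bookkeeping: if you follow the 1d proof literally, the first merge of the two traces uses invertibility of the \emph{object} $A_{X_i}$ (connecting the red regions), then the paired defect lines cancel via invertibility of the $M_f$'s, and finally invertibility of $A_{X_j}$ removes the remaining monochromatic surface---so the orchestration you flag is object/morphisms/object rather than morphisms-then-objects, but this does not affect correctness.
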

\begin{proof}
Identical to the proof of Lemma \ref{lem:is-invertible}.    
\end{proof}
\begin{lemma}
 For any two adjacent 2-simplices    
 \[
            \begin{tikzcd}
            X_i
                \arrow[rr, "f_{ik}"]\arrow[dr,"f_{ij}"']&{}&
                X_k\\
                {}&X_j\arrow[u,shorten <=3pt,shorten > =1pt,Rightarrow,"\Xi_{ijk}"',pos=.5]\ar[ur,"f_{jk}"']&{}\\
            \end{tikzcd}
            \qquad \text{and}\qquad
            \begin{tikzcd}
            X_i
                \arrow[rr, "f_{il}"]\arrow[dr,"f_{ik}"']&{}&
                X_{l}\\
                {}&X_{k}\arrow[u,shorten <=3pt,shorten > =1pt,Rightarrow,"\Xi_{ikl}"',pos=.5]\ar[ur,"f_{kl}"']&{}\\
            \end{tikzcd}
        \]   
in $\mathcal{C}$, we have a distinguished morphism
\[
\tau_{\Xi_{ijk},\Xi_{ikl}}\colon \tr(M_{f_{il}}^{-1}\circ M_{f_{kl}}\circ M_{f_{ik}})\otimes \tr(M_{f_{ik}^{-1}}\circ M_{f_{jk}}\circ M_{f_{ij}})\to \tr(M_{f_{il}}^{-1}\circ  M_{f_{kl}}\circ M_{f_{jk}}\circ M_{f_{ij}})
\]
in $\mathcal{V}$. For any two adjacent 2-simplices    
 \[
            \begin{tikzcd}
            X_j
                \arrow[rr, "f_{jl}"]\arrow[dr,"f_{jk}"']&{}&
                X_l\\
                {}&X_k\arrow[u,shorten <=3pt,shorten > =1pt,Rightarrow,"\Xi_{jkl}"',pos=.5]\ar[ur,"f_{kl}"']&{}\\
            \end{tikzcd}
            \qquad \text{and}\qquad
            \begin{tikzcd}
            X_i
                \arrow[rr, "f_{il}"]\arrow[dr,"f_{ij}"']&{}&
                X_{l}\\
                {}&X_j\arrow[u,shorten <=3pt,shorten > =1pt,Rightarrow,"\Xi_{ijl}"',pos=.5]\ar[ur,"f_{jl}"']&{}\\
            \end{tikzcd}
        \]   
in $\mathcal{C}$, we have a distinguished morphism
\[
\tau_{\Xi_{jkl},\Xi_{ijl}}\colon \tr(M_{f_{jl}}^{-1}\circ M_{f_{kl}}\circ M_{f_{jk}})\otimes \tr(M_{f_{il}^{-1}}\circ M_{f_{jl}}\circ M_{f_{ij}})\to \tr(M_{f_{il}}^{-1}\circ  M_{f_{kl}}\circ M_{f_{jk}}\circ M_{f_{ij}})
\]
in $\mathcal{V}$.
\end{lemma}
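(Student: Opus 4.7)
My plan is to construct both morphisms geometrically by interpreting the traces as closed 2d defect diagrams on disks, and each $\tau$ as a surgery operation that fuses two such disks along a shared \emph{invertible} defect.

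First I would unpack the graphical picture. Each trace on the left-hand sides is represented, as with $l_{\Xi_{ijk}}$ above, by a closed disk whose boundary circle carries three defect arcs. The tensor product of two such traces corresponds to the disjoint union of two disks. The key observation is that in the first case one disk carries the defect $M_{f_{ik}}$ on a boundary arc while the other carries $M_{f_{ik}}^{-1}$; in the second case the shared defect is $M_{f_{jl}}$ versus $M_{f_{jl}}^{-1}$. Under the colour and orientation conventions introduced above, the matching arcs agree on the colours on both sides of the defect and differ only in arrow orientation.

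Next I would invoke the invertibility of $M_{f_{ik}}$ (respectively $M_{f_{jl}}$) as a 1-morphism in $2\mathcal{V}$. Invertibility yields canonical unit and counit 2-isomorphisms for the adjunction $M_{f_{ik}}^{-1} \dashv M_{f_{ik}}$. In the 2d defect TQFT formalism this gives a canonical ``zip'' identifying the two parallel defect arcs $M_{f_{ik}}$ and $M_{f_{ik}}^{-1}$ after matching orientation, and thus a canonical 2-morphism gluing two disks carrying these arcs into a single disk where they have been annihilated against each other.

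The morphism $\tau_{\Xi_{ijk},\Xi_{ikl}}$ is then defined by performing this surgery: cut small arcs transverse to the two matching defects and glue the disks along them using the counit. The result is a single disk whose boundary carries, in cyclic order, the four defects $M_{f_{ij}}$, $M_{f_{jk}}$, $M_{f_{kl}}$, $M_{f_{il}}^{-1}$, i.e., the disk representing $\tr(M_{f_{il}}^{-1}\circ M_{f_{kl}}\circ M_{f_{jk}}\circ M_{f_{ij}})$. The construction of $\tau_{\Xi_{jkl},\Xi_{ijl}}$ is entirely parallel, with $M_{f_{jl}}$ playing the role of $M_{f_{ik}}$. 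The step I expect to be most delicate is checking that the resulting $\tau$ is canonical, i.e., independent of the choice of cutting arc; this will follow from the contractibility of arc choices on a disk together with the fact that, under invertibility, the counit is itself an equivalence, so any two admissible surgeries differ by a unique invertible 2-morphism.
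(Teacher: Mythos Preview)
Your proposal is correct and matches the paper's approach. The paper's proof consists precisely of drawing the explicit 2-dimensional bordism with defects you describe: a pair-of-pants surface from the disjoint union of the two circles to a single circle, on which the $M_{f_{ik}}$ and $M_{f_{ik}}^{-1}$ defect lines meet in a cap (the counit you mention) while the remaining four defect lines pass straight through; the second $\tau$ is drawn identically with $M_{f_{jl}}$ in place of $M_{f_{ik}}$.

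Two small remarks. First, your final paragraph about canonicity is unnecessary: the lemma only asserts the \emph{existence} of a distinguished morphism, so once you have written down one specific bordism-with-defects you are done; there is nothing to verify. Second, your phrasing in terms of ``disks'' and ``surgery'' is topologically equivalent but slightly off-register from the paper's conventions, where the traces are circles with defect points (objects of $\mathcal{V}=\Omega 2\mathcal{V}$) and $\tau$ is a genuine 2-bordism between them; keeping that picture in mind will make the subsequent Lemma on invertibility of $\tau$ (where one stacks this bordism with its vertical mirror) more transparent.
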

\begin{proof}
Let us add to our graphical assignments the following ones: 
 \[
 A_{X_l}\rightsquigarrow \raisebox{-.3cm}{
\begin{tikzpicture}
\fill[orange!40!white] (0,0) rectangle (.8,.8);
\end{tikzpicture} }
  \quad;\quad
  M_{f_{kl}}\rightsquigarrow \raisebox{-.3cm}{
\begin{tikzpicture}
\fill[orange!40!white] (0,0) rectangle (0.4,.8);
\fill[green!40!white] (0.4,0) rectangle (0.8,.8);
\draw[lightgray,thick,decoration={markings, mark=at position 0.5 with {\arrow{>}}},
        postaction={decorate}]  (0.4,0) -- (0.4,0.8);
\end{tikzpicture} }\quad;\quad 
  M_{f_{kl}}^{-1}\rightsquigarrow \raisebox{-.3cm}{
\begin{tikzpicture}
\fill[green!40!white] (0,0) rectangle (0.4,.8);
\fill[orange!40!white] (0.4,0) rectangle (0.8,.8);
\draw[lightgray,thick,decoration={markings, mark=at position 0.5 with {\arrow{<}}},
        postaction={decorate}]  (0.4,0) -- (0.4,0.8);
\end{tikzpicture} }
  \quad;\quad
  M_{f_{il}}\rightsquigarrow \raisebox{-.3cm}{
\begin{tikzpicture}
\fill[orange!40!white] (0,0) rectangle (0.4,.8);
\fill[red!40!white] (0.4,0) rectangle (0.8,.8);
\draw[green,thick,decoration={markings, mark=at position 0.5 with {\arrow{>}}},
        postaction={decorate}]  (0.4,0) -- (0.4,0.8);
\end{tikzpicture} }\quad;\quad 
  M_{f_{il}}^{-1}\rightsquigarrow \raisebox{-.3cm}{
\begin{tikzpicture}
\fill[red!40!white] (0,0) rectangle (0.4,.8);
\fill[orange!40!white] (0.4,0) rectangle (0.8,.8);
\draw[green,thick,decoration={markings, mark=at position 0.5 with {\arrow{<}}},
        postaction={decorate}]  (0.4,0) -- (0.4,0.8);
\end{tikzpicture} }\quad;
  \]
  \[
   M_{f_{jl}}\rightsquigarrow \raisebox{-.3cm}{
\begin{tikzpicture}
\fill[orange!40!white] (0,0) rectangle (0.4,.8);
\fill[blue!40!white] (0.4,0) rectangle (0.8,.8);
\draw[brown,thick,decoration={markings, mark=at position 0.5 with {\arrow{>}}},
        postaction={decorate}]  (0.4,0) -- (0.4,0.8);
\end{tikzpicture} }\quad;\quad 
  M_{f_{jl}}^{-1}\rightsquigarrow \raisebox{-.3cm}{
\begin{tikzpicture}
\fill[blue!40!white] (0,0) rectangle (0.4,.8);
\fill[orange!40!white] (0.4,0) rectangle (0.8,.8);
\draw[brown,thick,decoration={markings, mark=at position 0.5 with {\arrow{<}}},
        postaction={decorate}]  (0.4,0) -- (0.4,0.8);
\end{tikzpicture} }\quad.
  \]
Then we have the distinguished morphism $\tau_{\Xi_{ijk},\Xi_{ikl}}$ given by the following 2-dimensional bordism with defects:
\[
\begin{tikzpicture}
\filldraw[fill=red!40!white, draw=red!40!white] (-3,0) arc
    [
        start angle=180,
        end angle=210,
        x radius=1,
        y radius =.4
    ]    -- ({cos(210)-2},{.4*sin(210)+2.4}) 
    -- (-3,2.3)
     --cycle;
     \filldraw[fill=orange!40!white, draw=orange!40!white] ({cos(210)-2},{.4*sin(210)}) arc
    [
        start angle=210,
        end angle=270,
        x radius=1,
        y radius =.4
    ]    -- ({cos(270)-2},{.4*sin(270)+2.4}) --  ({cos(210)-2},{.4*sin(210)+2.4})
     --cycle;
     \filldraw[fill=green!40!white, draw=green!40!white] ({cos(270)-2},{.4*sin(270)}) arc
    [
        start angle=270,
        end angle=330,
        x radius=1,
        y radius =.4
    ]    -- ({cos(330)-2},{.4*sin(330)+2.4}) -- ({cos(270)-2},{.4*sin(270)+2.4})
     --cycle;
     \filldraw[fill=red!40!white, draw=red!40!white] ({cos(330)-2},{.4*sin(330)}) arc
    [
        start angle=330,
        end angle=360,
        x radius=1,
        y radius =.4
    ]    -- ({cos(360)-2},{.4*sin(360)+2.4}) -- ({cos(330)-2},{.4*sin(330)+2.4})
    --cycle;
    \filldraw[fill=red!40!white, draw=red!40!white] (0,0) arc
    [
        start angle=180,
        end angle=210,
        x radius=1,
        y radius =.4
    ]    -- ({cos(210)+1},{.4*sin(210)+2.4}) -- ({cos(180)+1},{.4*sin(180)+2.4})
     --cycle;
     \filldraw[fill=green!40!white, draw=green!40!white] ({cos(210)+1},{.4*sin(210)}) arc
    [
        start angle=210,
        end angle=270,
        x radius=1,
        y radius =.4
    ]    -- ({cos(270)+1},{.4*sin(270)+2.4}) -- ({cos(210)+.95},{.4*sin(210)+2.4})
     --cycle;
     \filldraw[fill=blue!40!white, draw=blue!40!white] ({cos(270)+1},{.4*sin(270)}) arc
    [
        start angle=270,
        end angle=330,
        x radius=1,
        y radius =.4
    ]    -- ({cos(330)+1},{.4*sin(330)+2.4}) -- ({cos(270)+1},{.4*sin(270)+2.4})
     --cycle;
     \filldraw[fill=red!40!white, draw=red!40!white] ({cos(330)+1},{.4*sin(330)}) arc
    [
        start angle=330,
        end angle=360,
        x radius=1,
        y radius =.4
    ]    -- ({cos(360)+1},{.4*sin(360)+2.3}) -- ({cos(330)+1},{.4*sin(330)+2.4})
    --cycle;
   \filldraw[fill=red!40!white, draw=red!40!white] ({cos(330)-2},0) -- (-1,0.5) arc
    [
        start angle=180,
        end angle=0,
        x radius=0.5,
        y radius =.2
    ]    -- ({cos(210)+.99},0) -- ({cos(210)+.99},2.4) --  ({cos(330)-2},2.4)
    --cycle;
  \filldraw[fill=green!40!white, draw=green!40!white] ({cos(330)-2.025},1)  arc
    [
        start angle=180,
        end angle=0,
        x radius=0.637,
        y radius =.3
    ]    -- ({cos(210)+1},0) -- ({cos(210)+1},2.4) --  ({cos(330)-2},2.4)
    --cycle;  
    \filldraw[fill=red!10!white, draw=red!10!white] (2,2.3) arc
    [
        start angle=0,
        end angle=360,
        x radius=2.5,
        y radius =.6
    ];    
    \draw[green,thick,decoration={markings, mark=at position 0.5 with {\arrow{<}}},
        postaction={decorate}]  ({cos(210)-2},{.4*sin(210)})  -- ({cos(210)-2},{.4*sin(210)+2.3}) ;
    \draw[lightgray,thick,decoration={markings, mark=at position 0.5 with {\arrow{>}}},
        postaction={decorate}]  ({cos(270)-2},{.4*sin(270)}) -- ({cos(270)-2},{.4*sin(270)+2.21}) ;
    \draw[black,thick]  ({cos(330)-2},{.4*sin(330)}) -- ({cos(330)-2},{.4*sin(330)+1.2}) ;   
    \draw[black,thick,decoration={markings, mark=at position 0.5 with {\arrow{>}}},
        postaction={decorate}] ({cos(330)-1.995},1)  arc
    [
        start angle=180,
        end angle=0,
        x radius=0.630,
        y radius =.3
    ] ; 
    \draw[black,thick]  ({cos(210)+.99},{.4*sin(210)})  -- ({cos(210)+.99},{.4*sin(210)+1.2}) ;
    \draw[orange,thick,decoration={markings, mark=at position 0.5 with {\arrow{>}}},
        postaction={decorate}]  ({cos(270)+1},{.4*sin(270)}) -- ({cos(270)+1},{.4*sin(270)+2.21}) ;
    \draw[black,thick,decoration={markings, mark=at position 0.5 with {\arrow{>}}},
        postaction={decorate}]  ({cos(330)+1},{.4*sin(330)}) -- ({cos(330)+1},{.4*sin(330)+2.3}) ;        
\end{tikzpicture}
\]
and the distinguished morphism $\tau_{\Xi_{jkl},\Xi_{ijl}}$ given by the following 2-dimensional bordism with defects:
\[
\begin{tikzpicture}
\filldraw[fill=orange!40!white, draw=orange!40!white] (-3,0) arc
    [
        start angle=180,
        end angle=210,
        x radius=1,
        y radius =.4
    ]    -- ({cos(210)-2},{.4*sin(210)+2.4}) 
    -- (-3,2.3)
     --cycle;
     \filldraw[fill=green!40!white, draw=green!40!white] ({cos(210)-2},{.4*sin(210)}) arc
    [
        start angle=210,
        end angle=270,
        x radius=1,
        y radius =.4
    ]    -- ({cos(270)-2},{.4*sin(270)+2.4}) --  ({cos(210)-2},{.4*sin(210)+2.4})
     --cycle;
     \filldraw[fill=blue!40!white, draw=blue!40!white] ({cos(270)-2},{.4*sin(270)}) arc
    [
        start angle=270,
        end angle=330,
        x radius=1,
        y radius =.4
    ]    -- ({cos(330)-2},{.4*sin(330)+2.4}) -- ({cos(270)-2},{.4*sin(270)+2.4})
     --cycle;
     \filldraw[fill=orange!40!white, draw=orange!40!white] ({cos(330)-2},{.4*sin(330)}) arc
    [
        start angle=330,
        end angle=360,
        x radius=1,
        y radius =.4
    ]    -- ({cos(360)-2},{.4*sin(360)+2.4}) -- ({cos(330)-2},{.4*sin(330)+2.4})
    --cycle;
    \filldraw[fill=orange!40!white, draw=orange!40!white] (0,0) arc
    [
        start angle=180,
        end angle=210,
        x radius=1,
        y radius =.4
    ]    -- ({cos(210)+1},{.4*sin(210)+2.4}) -- ({cos(180)+1},{.4*sin(180)+2.4})
     --cycle;
     \filldraw[fill=blue!40!white, draw=blue!40!white] ({cos(210)+1},{.4*sin(210)}) arc
    [
        start angle=210,
        end angle=270,
        x radius=1,
        y radius =.4
    ]    -- ({cos(270)+1},{.4*sin(270)+2.4}) -- ({cos(210)+.95},{.4*sin(210)+2.4})
     --cycle;
     \filldraw[fill=red!40!white, draw=red!40!white] ({cos(270)+1},{.4*sin(270)}) arc
    [
        start angle=270,
        end angle=330,
        x radius=1,
        y radius =.4
    ]    -- ({cos(330)+1},{.4*sin(330)+2.4}) -- ({cos(270)+1},{.4*sin(270)+2.4})
     --cycle;
     \filldraw[fill=orange!40!white, draw=orange!40!white] ({cos(330)+1},{.4*sin(330)}) arc
    [
        start angle=330,
        end angle=360,
        x radius=1,
        y radius =.4
    ]    -- ({cos(360)+1},{.4*sin(360)+2.3}) -- ({cos(330)+1},{.4*sin(330)+2.4})
    --cycle;
   \filldraw[fill=orange!40!white, draw=orange!40!white] ({cos(330)-2},0) -- (-1,0.5) arc
    [
        start angle=180,
        end angle=0,
        x radius=0.5,
        y radius =.2
    ]    -- ({cos(210)+.99},0) -- ({cos(210)+.99},2.4) --  ({cos(330)-2},2.4)
    --cycle;
  \filldraw[fill=blue!40!white, draw=blue!40!white] ({cos(330)-2.025},1)  arc
    [
        start angle=180,
        end angle=0,
        x radius=0.637,
        y radius =.3
    ]    -- ({cos(210)+1},0) -- ({cos(210)+1},2.4) --  ({cos(330)-2},2.4)
    --cycle;  
    \filldraw[fill=orange!10!white, draw=orange!10!white] (2,2.3) arc
    [
        start angle=0,
        end angle=360,
        x radius=2.5,
        y radius =.6
    ];    
    \draw[lightgray,thick,decoration={markings, mark=at position 0.5 with {\arrow{>}}},
        postaction={decorate}]  ({cos(210)-2},{.4*sin(210)})  -- ({cos(210)-2},{.4*sin(210)+2.3}) ;
    \draw[orange,thick,decoration={markings, mark=at position 0.5 with {\arrow{>}}},
        postaction={decorate}]  ({cos(270)-2},{.4*sin(270)}) -- ({cos(270)-2},{.4*sin(270)+2.21}) ;
    \draw[brown,thick]  ({cos(330)-2},{.4*sin(330)}) -- ({cos(330)-2},{.4*sin(330)+1.2}) ;   
    \draw[brown,thick,decoration={markings, mark=at position 0.5 with {\arrow{<}}},
        postaction={decorate}] ({cos(330)-1.995},1)  arc
    [
        start angle=180,
        end angle=0,
        x radius=0.630,
        y radius =.3
    ] ; 
    \draw[brown,thick]  ({cos(210)+.99},{.4*sin(210)})  -- ({cos(210)+.99},{.4*sin(210)+1.2}) ;
    \draw[black,thick,decoration={markings, mark=at position 0.5 with {\arrow{>}}},
        postaction={decorate}]  ({cos(270)+1},{.4*sin(270)}) -- ({cos(270)+1},{.4*sin(270)+2.21}) ;
    \draw[green,thick,decoration={markings, mark=at position 0.5 with {\arrow{<}}},
        postaction={decorate}]  ({cos(330)+1},{.4*sin(330)}) -- ({cos(330)+1},{.4*sin(330)+2.3}) ;        
\end{tikzpicture}.
\]
\end{proof}
\begin{remark}
When $2\mathcal{V}=2\Vect_\K$ or $2\mathcal{V}=2\mathrm{s}\Vect_\K$, the morphisms $\tau_{\Xi_{ijk},\Xi_{ikl}}$ and $\tau_{\Xi_{jkl},\Xi_{ijl}}$ specialize to 
 the compositions
 \[
 \mathrm{Hom}(M_{f_{ik}},M_{f_{jk}}\circ M_{f_{ij}})\otimes 
\mathrm{Hom}(M_{f_{il}},M_{f_{kl}}\circ M_{f_{ik}}) \to \mathrm{Hom}(M_{f_{il}},M_{f_{kl}}\circ M_{f_{jk}}\circ M_{f_{ij}})
\]
and
\[
\mathrm{Hom}(M_{f_{jl}},M_{f_{kl}}\circ M_{f_{jk}})\otimes 
\mathrm{Hom}(M_{f_{il}},M_{f_{jl}}\circ M_{f_{ij}}) \to \mathrm{Hom}(M_{f_{il}},M_{f_{kl}}\circ M_{f_{jk}}\circ M_{f_{ij}}),
\]
respectively.
\end{remark}
\begin{remark}
In drawing the morphism   $\tau_{\Xi_{jkl},\Xi_{ijl}}$ we made implicit use of the cyclic invariance of the trace. A drawing literally matching the source and target of $\tau_{\Xi_{jkl},\Xi_{ijl}}$, without making use of the cyclic invariance of the trace is:
\[
\begin{tikzpicture}
\filldraw[fill=red!40!white, draw=red!40!white] 
 (0,0) arc
    [
        start angle=180,
        end angle=360,
        x radius=1,
        y radius =.4
    ]   -- (2,2.3) -- (0,2.3) --cycle;
\filldraw[fill=orange!40!white, draw=orange!40!white] 
 (0,0) arc
    [
        start angle=180,
        end angle=270,
        x radius=1,
        y radius =.4
    ]   -- (1,1) -- (0,1) --cycle;
 \filldraw[fill=blue!40!white, draw=blue!40!white] 
 (-3,0) arc
    [
        start angle=180,
        end angle=360,
        x radius=1,
        y radius =.4
    ]  --  (-1,0) -- (-1,0.5) arc
    [
        start angle=180,
        end angle=0,
        x radius=0.5,
        y radius =.2
    ]  
    -- (0,0.4) 
    arc
    [
        start angle=90,
        end angle=0,
        x radius=1,
        y radius =.3
    ] -- (1,-0.4)
    arc
    [
        start angle=270,
        end angle=330,
        x radius=1,
        y radius =.4
    ] -- ({cos(330)+1},2.3) -- (-3,2.3) --(-3,0)
    --cycle;    
 \filldraw[fill=green!40!white, draw=green!40!white] (-3,0) arc
    [
        start angle=180,
        end angle=330,
        x radius=1,
        y radius =.4
    ]    -- ({cos(330)-2},0.8)
    arc[
        start angle=180,
        end angle=90,
        x radius=((3-cos(330))/2),
        y radius =.5
    ]  
        arc[
      start angle=270,
        end angle=360,
        x radius=((3-cos(330))/2),
        y radius =.3
    ]  
    -- (1,2.3) -- (-3,2.3)
     --cycle;
 \filldraw[fill=orange!40!white, draw=orange!40!white] (-3,0) arc
    [
        start angle=180,
        end angle=270,
        x radius=1,
        y radius =.4
    ]    -- (-2,2.3)
    -- (-3,2.3)
     --cycle;
 \filldraw[fill=blue!40!white, draw=blue!40!white] (-3,0) arc
    [
        start angle=180,
        end angle=210,
        x radius=1,
        y radius =.4
    ]    -- ({cos(210)-2},1)
    arc
    [
        start angle=0,
        end angle=90,
        x radius=(cos(210)+1),
        y radius =.2
    ]  
     --cycle;
      \filldraw[fill=red!40!white, draw=red!40!white] (-3,2.3) -- (-3,1.8) arc
    [
        start angle=270,
        end angle=360,
        x radius=(cos(210)+1),
        y radius =.2
    ]    -- ({cos(210)-2},2.3)
     --cycle;
 \filldraw[fill=red!40!white, draw=red!40!white] (0,0) arc
    [
        start angle=180,
        end angle=210,
        x radius=1,
        y radius =.4
    ]    -- ({cos(210)+1},-0.1)
    arc
    [
        start angle=0,
        end angle=90,
        x radius=(cos(210)+1),
        y radius =.2
    ]  
     --cycle;  
\draw[black,thick,decoration={markings, mark=at position 0.5 with {\arrow{>}}},
        postaction={decorate}]  ({cos(330)+1},{0.4*sin(330)}) -- ({cos(330)+1},2.3)  
    ;       
\draw[green,thick]  ({cos(210)+1},{0.4*sin(210)})--({cos(210)+1},-0.1)
    arc
    [
        start angle=0,
        end angle=90,
        x radius=(cos(210)+1),
        y radius =.2
    ]  ;  
\draw[brown,thick,decoration={markings, mark=at position 0.5 with {\arrow{<}}},
        postaction={decorate}]  (0,0.4) 
    arc
    [
        start angle=90,
        end angle=0,
        x radius=1,
        y radius =.3
    ] -- (1,-0.4);  
\draw[orange,thick,decoration={markings, mark=at position 0.5 with {\arrow{>}}},
        postaction={decorate}]  ({cos(330)-2},{0.4*sin(330)}) -- ({cos(330)-2},.8)
    arc[
        start angle=180,
        end angle=90,
        x radius=((3-cos(330))/2),
        y radius =.5
    ]  
        arc[
      start angle=270,
        end angle=360,
        x radius=((3-cos(330))/2),
        y radius =.3
    ]  
    -- (1,2.3);
\draw[lightgray,thick,decoration={markings, mark=at position 0.5 with {\arrow{>}}},
        postaction={decorate}]  (-2,-0.4) -- (-2,2.3) ;     
\draw[brown,thick,decoration={markings, mark=at position 0.5 with {\arrow{<}}},
        postaction={decorate}]  ({cos(210)-2},{0.4*sin(210)}) -- ({cos(210)-2},1)
        arc
    [
        start angle=0,
        end angle=90,
        x radius=(cos(210)+1),
        y radius =.2
    ]  ;     
\draw[green,thick,decoration={markings, mark=at position 0.3 with {\arrow{<}}},
        postaction={decorate}]   (-3,1.8) arc
    [
        start angle=270,
        end angle=360,
        x radius=(cos(210)+1),
        y radius =.2
    ] -- ({cos(210)-2},2.3);  

 \filldraw[fill=red!10!white, draw=red!10!white] (2,2.3) arc
    [
        start angle=0,
        end angle=360,
        x radius=2.5,
        y radius =.6
    ];         
\end{tikzpicture}
\]
Clearly, the two drawings are related by a (unique up to isotopy) diffemorphism of the surfaces with defects realizing them. Notice also how in this second drawing it is manifest that $\tau_{\Xi_{ijk},\Xi_{ikl}}$ and $\tau_{\Xi_{jkl},\Xi_{ijl}}$ have the same target.
\end{remark}
\begin{lemma}\label{tau-isomorphism}
The morphisms    $\tau_{\Xi_{ijk},\Xi_{ikl}}$ and $\tau_{\Xi_{jkl},\Xi_{ijl}}$ are isomorphisms. 
\end{lemma}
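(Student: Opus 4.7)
The plan is to exhibit explicit two-sided inverses for $\tau_{\Xi_{ijk},\Xi_{ikl}}$ and $\tau_{\Xi_{jkl},\Xi_{ijl}}$ by time-reversing the defining bordisms with defects (reading the surface from top to bottom instead of bottom to top), and then verify that the two compositions reduce to identity cylinders by the same invertibility cancellation moves used in the proof of Lemma \ref{lem:is-invertible}, now applied one dimension higher: to defect lines inside surfaces rather than to defect points inside circles. Since every $A_{X_\bullet}$ is invertible, the corresponding colour region has a distinguished monoidal inverse; since every $M_{f_{\bullet\bullet}}$ is invertible, the corresponding defect line can be traversed in either direction. This furnishes a canonical candidate inverse bordism, with all colour regions and arrow directions on defect lines determined by the graphical conventions already set up in this Section.

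Concretely, for $\tau_{\Xi_{ijk},\Xi_{ikl}}$ I would let $\sigma$ be the bordism obtained by flipping the defining surface upside down, so that $\sigma$ is a morphism from $\tr(M_{f_{il}}^{-1}\circ M_{f_{kl}}\circ M_{f_{jk}}\circ M_{f_{ij}})$ back to $\tr(M_{f_{il}}^{-1}\circ M_{f_{kl}}\circ M_{f_{ik}})\otimes \tr(M_{f_{ik}}^{-1}\circ M_{f_{jk}}\circ M_{f_{ij}})$. The composition $\sigma\circ\tau_{\Xi_{ijk},\Xi_{ikl}}$ is then a bordism from the tensor product of two traces to itself; topologically it is a genus-$0$ surface with two input and two output circles, threaded by defect lines appearing in $(+,-)$ pairs along the invertible morphism $M_{f_{ik}}$ and enclosing pairs of adjacent regions coloured by the mutually inverse objects $A_{X_k}$ and $A_{X_k}^{-1}$. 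By the 2-dimensional analogue of the cancellations performed in the proofs of Lemmas \ref{lem:is-invertible} and \ref{lem:is-invertible2}, these pairs annihilate and the composite surface retracts to a disjoint union of two cylinders, i.e.\ the identity on the source. The opposite composition $\tau_{\Xi_{ijk},\Xi_{ikl}}\circ\sigma$ is handled symmetrically and collapses to a single identity cylinder on the one-circle target.

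The argument for $\tau_{\Xi_{jkl},\Xi_{ijl}}$ is formally identical, with the defect line for $M_{f_{jl}}$ playing the role previously played by $M_{f_{ik}}$ and the colour coding adjusted accordingly; the alternative drawing given in the remark preceding the statement, which displays the source and target of $\tau_{\Xi_{jkl},\Xi_{ijl}}$ without invoking cyclic invariance of the trace, makes the construction of the reversed bordism $\sigma'$ and the subsequent cancellations particularly transparent.

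The only genuine obstacle is the bookkeeping of colours and arrow orientations: one has to check that every defect line produced by the composition genuinely meets its orientation-reversed counterpart and that every colour region is adjacent to its monoidal inverse, so that the invertibility-based annihilation moves are legal. Once the graphical conventions from the beginning of this Section are followed, this is automatic, and no step beyond what already appeared in Section \ref{sec:invertible-1d} is needed; the proof is genuinely a one-dimension-higher replica of the proof of Lemma \ref{lem:is-invertible}, of which the present argument is literally a ``slice''.
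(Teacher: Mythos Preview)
Your approach is essentially identical to the paper's: it defines the inverse $\hat\tau$ by vertically reflecting the defining bordism, then checks both composites reduce to identity cylinders using invertibility of the relevant colour region and of the defect line $M_{f_{ik}}$. One small bookkeeping slip: the colour region whose invertibility is invoked in the paper's cancellations is $A_{X_i}$ (red), not $A_{X_k}$; otherwise your outline matches the paper's proof step for step.
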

\begin{proof}
We give a proof for $\tau_{\Xi_{ijk},\Xi_{ikl}}$; the proof for $\tau_{\Xi_{jkl},\Xi_{ijl}}$ is identical. Consider the morphism $\hat{\tau}_{\Xi_{ijk},\Xi_{ikl}}$ defined by the following surface with defect lines:
\[
 } in the second step.
\end{proof}

\begin{remark}
Lemma \ref{tau-isomorphism} implies in particular that
\[
\tr(M_{f_{il}}^{-1}\circ  M_{f_{kl}}\circ M_{f_{jk}}\circ M_{f_{ij}})
\]
is an invertible object of $\mathcal{V}$. This could also have been verified directly with a proof analogous to that of Lemma \ref{lem:is-invertible2}.
\end{remark}
\begin{corollary}\label{cor:m}
Given a 3-simplex in $\mathcal{C}$ with 3-filler $\Upsilon_{ijkl}$, let us write $m_{\Upsilon_{ijkl}}$ for the invertible object 
\[
m_{\Upsilon_{ijkl}}=\tr(M_{f_{il}}^{-1}\circ  M_{f_{kl}}\circ M_{f_{jk}}\circ M_{f_{ij}})
\]
of $\mathcal{V}$. Also, let us write  $\tau^{\mathrm{odd}}_{\Upsilon_{ijkl}}$ and $\tau^{\mathrm{even}}_{\Upsilon_{ijkl}}$ for $\tau_{\Xi_{ijk},\Xi_{ikl}}$ and $\tau_{\Xi_{jkl},\Xi_{ijl}}$, respectively. Then we have isomorphisms
\[
l_{\Xi_{ikl}}\otimes l_{\Xi_{ijk}}\xrightarrow{\tau^{\mathrm{odd}}_{\Upsilon_{ijkl}}} m_{\Upsilon_{ijkl}}
\xleftarrow{\tau^{\mathrm{even}}_{\Upsilon_{ijkl}}}
l_{\Xi_{jkl}}\otimes l_{\Xi_{ijl}}
\]
\end{corollary}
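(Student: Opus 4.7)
The corollary is essentially a repackaging of what has already been established, so the plan is to explain that no new calculation is needed beyond an unpacking of notation.

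First, I would observe that by the very definition of $l_{\Xi_{ijk}}$ from Proposition \ref{prop:wannabe-is-projective}, we have
\[
l_{\Xi_{ikl}}\otimes l_{\Xi_{ijk}}=\tr(M_{f_{il}}^{-1}\circ M_{f_{kl}}\circ M_{f_{ik}})\otimes \tr(M_{f_{ik}}^{-1}\circ M_{f_{jk}}\circ M_{f_{ij}})
\]
and
\[
l_{\Xi_{jkl}}\otimes l_{\Xi_{ijl}}=\tr(M_{f_{jl}}^{-1}\circ M_{f_{kl}}\circ M_{f_{jk}})\otimes \tr(M_{f_{il}}^{-1}\circ M_{f_{jl}}\circ M_{f_{ij}}),
\]
while $m_{\Upsilon_{ijkl}}=\tr(M_{f_{il}}^{-1}\circ M_{f_{kl}}\circ M_{f_{jk}}\circ M_{f_{ij}})$ is precisely the common target appearing in the previous lemma. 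Hence, matching notation, $\tau^{\mathrm{odd}}_{\Upsilon_{ijkl}}:=\tau_{\Xi_{ijk},\Xi_{ikl}}$ and $\tau^{\mathrm{even}}_{\Upsilon_{ijkl}}:=\tau_{\Xi_{jkl},\Xi_{ijl}}$ are morphisms with exactly the source and target claimed in the statement.

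Second, the fact that these two morphisms are isomorphisms is exactly Lemma \ref{tau-isomorphism}. There is thus nothing further to verify; the content of the corollary is the assembling of the two isomorphisms of the preceding lemma into a single zig-zag diagram which will later be used to manipulate the cocycle $l$ over 3-simplices.

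Finally, I would note in passing that $m_{\Upsilon_{ijkl}}$ is automatically an invertible object of $\mathcal{V}$: it is isomorphic to $l_{\Xi_{ikl}}\otimes l_{\Xi_{ijk}}$ via $\tau^{\mathrm{odd}}_{\Upsilon_{ijkl}}$, and both factors are invertible by Lemma \ref{lem:is-invertible2}, so their tensor product is invertible in the symmetric monoidal category $\mathcal{V}$. As remarked in the text immediately preceding the corollary, this could also be seen by a direct graphical argument entirely analogous to that of Lemma \ref{lem:is-invertible2}. There is no real obstacle here: the only subtlety is bookkeeping the correspondence between the $\tau$'s of the previous lemma and the $\tau^{\mathrm{odd}}$, $\tau^{\mathrm{even}}$ notation introduced for subsequent use.
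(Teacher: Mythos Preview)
Your proposal is correct and matches the paper's treatment: the corollary has no separate proof in the paper precisely because it is a direct repackaging of the preceding lemma (Lemma~\ref{tau-isomorphism}) together with the definitions, exactly as you explain. Your remark on the invertibility of $m_{\Upsilon_{ijkl}}$ also mirrors the paper's own remark immediately preceding the corollary.
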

\begin{definition}
 In the same notation as in Corollary \ref{cor:m} we write $\psi_{\Upsilon_{ijkl}}$ for the isomorphism
 \[
 \psi_{\Upsilon_{ijkl}}=( \tau^{\mathrm{odd}}_{\Upsilon_{ijkl}})^{-1}\circ \tau^{\mathrm{even}}_{\Upsilon_{ijkl}}\colon l_{\Xi_{jkl}}\otimes l_{\Xi_{ijl}}\xrightarrow{\sim}l_{\Xi_{ijk}}\otimes l_{\Xi_{ikl}}. 
 \]
 and $\phi_{\Upsilon_{ijkl}}$ for the inverse isomorphism
 \[
 \phi_{\Upsilon_{ijkl}}\colon l_{\Xi_{ijk}}\otimes l_{\Xi_{ikl}}\xrightarrow{\sim}l_{\Xi_{jkl}}\otimes l_{\Xi_{ijl}}.
 \]
 \end{definition}
\begin{remark}
By definition, $\psi_{\Upsilon_{ijkl}}$ participates in a commutative diagram of isomorphisms of invertible objects of $\mathcal{V}$:
\[
 \begin{tikzcd}
 l_{\Xi_{jkl}}\otimes l_{\Xi_{ijl}}\ar[rd,"\tau^{\mathrm{even}}_{\Upsilon_{ijkl}}"']\ar[rr,"\psi_{\Upsilon_{ijkl}}"]&&l_{\Xi_{ijk}}\otimes l_{\Xi_{ikl}}\ar[dl,"\tau^{\mathrm{odd}}_{\Upsilon_{ijkl}}"]\\
 {}&m_{\Upsilon_{ijkl}}
 \end{tikzcd}.
\]
For the pleasure of drawing it, we notice that the isomorphism $\phi_{\Upsilon_{ijkl}}$ is represented by the following surface with defects:
\[
\begin{tikzpicture}
\begin{scope}
\filldraw[fill=red!40!white, draw=red!40!white] (-3,0) arc
    [
        start angle=180,
        end angle=210,
        x radius=1,
        y radius =.4
    ]    -- ({cos(210)-2},{.4*sin(210)+2.4}) 
    -- (-3,2.3)
     --cycle;
     \filldraw[fill=orange!40!white, draw=orange!40!white] ({cos(210)-2},{.4*sin(210)}) arc
    [
        start angle=210,
        end angle=270,
        x radius=1,
        y radius =.4
    ]    -- ({cos(270)-2},{.4*sin(270)+2.4}) --  ({cos(210)-2},{.4*sin(210)+2.4})
     --cycle;
     \filldraw[fill=green!40!white, draw=green!40!white] ({cos(270)-2},{.4*sin(270)}) arc
    [
        start angle=270,
        end angle=330,
        x radius=1,
        y radius =.4
    ]    -- ({cos(330)-2},{.4*sin(330)+2.4}) -- ({cos(270)-2},{.4*sin(270)+2.4})
     --cycle;
     \filldraw[fill=red!40!white, draw=red!40!white] ({cos(330)-2},{.4*sin(330)}) arc
    [
        start angle=330,
        end angle=360,
        x radius=1,
        y radius =.4
    ]    -- ({cos(360)-2},{.4*sin(360)+2.4}) -- ({cos(330)-2},{.4*sin(330)+2.4})
    --cycle;
    \filldraw[fill=red!40!white, draw=red!40!white] (0,0) arc
    [
        start angle=180,
        end angle=210,
        x radius=1,
        y radius =.4
    ]    -- ({cos(210)+1},{.4*sin(210)+2.4}) -- ({cos(180)+1},{.4*sin(180)+2.4})
     --cycle;
     \filldraw[fill=green!40!white, draw=green!40!white] ({cos(210)+1},{.4*sin(210)}) arc
    [
        start angle=210,
        end angle=270,
        x radius=1,
        y radius =.4
    ]    -- ({cos(270)+1},{.4*sin(270)+2.4}) -- ({cos(210)+.95},{.4*sin(210)+2.4})
     --cycle;
     \filldraw[fill=blue!40!white, draw=blue!40!white] ({cos(270)+1},{.4*sin(270)}) arc
    [
        start angle=270,
        end angle=330,
        x radius=1,
        y radius =.4
    ]    -- ({cos(330)+1},{.4*sin(330)+2.4}) -- ({cos(270)+1},{.4*sin(270)+2.4})
     --cycle;
     \filldraw[fill=red!40!white, draw=red!40!white] ({cos(330)+1},{.4*sin(330)}) arc
    [
        start angle=330,
        end angle=360,
        x radius=1,
        y radius =.4
    ]    -- ({cos(360)+1},{.4*sin(360)+2.3}) -- ({cos(330)+1},{.4*sin(330)+2.4})
    --cycle;
   \filldraw[fill=red!40!white, draw=red!40!white] ({cos(330)-2},0) -- (-1,0.5) arc
    [
        start angle=180,
        end angle=0,
        x radius=0.5,
        y radius =.2
    ]    -- ({cos(210)+.99},0) -- ({cos(210)+.99},2.4) --  ({cos(330)-2},2.4)
    --cycle;
  \filldraw[fill=green!40!white, draw=green!40!white] ({cos(330)-2.025},1)  arc
    [
        start angle=180,
        end angle=0,
        x radius=0.637,
        y radius =.3
    ]    -- ({cos(210)+1},0) -- ({cos(210)+1},2.4) --  ({cos(330)-2},2.4)
    --cycle;  
    \filldraw[fill=red!10!white, draw=red!10!white] (2,2.3) arc
    [
        start angle=0,
        end angle=360,
        x radius=2.5,
        y radius =.6
    ];    
    \draw[green,thick,decoration={markings, mark=at position 0.5 with {\arrow{<}}},
        postaction={decorate}]  ({cos(210)-2},{.4*sin(210)})  -- ({cos(210)-2},{.4*sin(210)+2.3}) ;
    \draw[lightgray,thick,decoration={markings, mark=at position 0.5 with {\arrow{>}}},
        postaction={decorate}]  ({cos(270)-2},{.4*sin(270)}) -- ({cos(270)-2},{.4*sin(270)+2.21}) ;
    \draw[black,thick]  ({cos(330)-2},{.4*sin(330)}) -- ({cos(330)-2},{.4*sin(330)+1.2}) ;   
    \draw[black,thick,decoration={markings, mark=at position 0.5 with {\arrow{>}}},
        postaction={decorate}] ({cos(330)-1.995},1)  arc
    [
        start angle=180,
        end angle=0,
        x radius=0.630,
        y radius =.3
    ] ; 
    \draw[black,thick]  ({cos(210)+.99},{.4*sin(210)})  -- ({cos(210)+.99},{.4*sin(210)+1.2}) ;
    \draw[orange,thick,decoration={markings, mark=at position 0.5 with {\arrow{>}}},
        postaction={decorate}]  ({cos(270)+1},{.4*sin(270)}) -- ({cos(270)+1},{.4*sin(270)+2.21}) ;
    \draw[black,thick,decoration={markings, mark=at position 0.5 with {\arrow{>}}},
        postaction={decorate}]  ({cos(330)+1},{.4*sin(330)}) -- ({cos(330)+1},{.4*sin(330)+2.3}) ;   
\end{scope}
\begin{scope}[xscale=1,yscale=-1,yshift=-3.9cm]
\filldraw[fill=red!40!white, draw=red!40!white] 
 (0,-.3) arc
    [
        start angle=180,
        end angle=360,
        x radius=1,
        y radius =.4
    ]   -- (2,2.3) -- (0,2.3) --cycle;
\filldraw[fill=orange!40!white, draw=orange!40!white] 
 (0,-.3) arc
    [
        start angle=180,
        end angle=270,
        x radius=1,
        y radius =.4
    ]   -- (1,1) -- (0,1) --cycle;
 \filldraw[fill=blue!40!white, draw=blue!40!white] 
 (-3,-.3) arc
    [
        start angle=180,
        end angle=360,
        x radius=1,
        y radius =.4
    ]  --  (-1,0) -- (-1,0.5) arc
    [
        start angle=180,
        end angle=0,
        x radius=0.5,
        y radius =.2
    ]  
    -- (0,0.4) 
    arc
    [
        start angle=90,
        end angle=0,
        x radius=1,
        y radius =.3
    ] -- (1,-0.4)
    arc
    [
        start angle=270,
        end angle=330,
        x radius=1,
        y radius =.4
    ] -- ({cos(330)+1},2.3) -- (-3,2.3) --(-3,0)
    --cycle;    
 \filldraw[fill=green!40!white, draw=green!40!white] (-3,-.3) arc
    [
        start angle=180,
        end angle=330,
        x radius=1,
        y radius =.4
    ]    -- ({cos(330)-2},0.8)
    arc[
        start angle=180,
        end angle=90,
        x radius=((3-cos(330))/2),
        y radius =.5
    ]  
        arc[
      start angle=270,
        end angle=360,
        x radius=((3-cos(330))/2),
        y radius =.3
    ]  
    -- (1,2.3) -- (-3,2.3)
     --cycle;
 \filldraw[fill=orange!40!white, draw=orange!40!white] (-3,-.3) arc
    [
        start angle=180,
        end angle=270,
        x radius=1,
        y radius =.4
    ]    -- (-2,2.3)
    -- (-3,2.3)
     --cycle;
 \filldraw[fill=blue!40!white, draw=blue!40!white] (-3,-.3) arc
    [
        start angle=180,
        end angle=210,
        x radius=1,
        y radius =.4
    ]    -- ({cos(210)-2},1)
    arc
    [
        start angle=0,
        end angle=90,
        x radius=(cos(210)+1),
        y radius =.2
    ]  
     --cycle;
      \filldraw[fill=red!40!white, draw=red!40!white] (-3,2.3) -- (-3,1.8) arc
    [
        start angle=270,
        end angle=360,
        x radius=(cos(210)+1),
        y radius =.2
    ]    -- ({cos(210)-2},2.3)
     --cycle;
 \filldraw[fill=red!40!white, draw=red!40!white] (0,-.3) arc
    [
        start angle=180,
        end angle=210,
        x radius=1,
        y radius =.4
    ]    -- ({cos(210)+1},-0.1)
    arc
    [
        start angle=0,
        end angle=90,
        x radius=(cos(210)+1),
        y radius =.2
    ]  
     --cycle;  
\draw[black,thick,decoration={markings, mark=at position 0.5 with {\arrow{<}}},
        postaction={decorate}]  ({cos(330)+1},{0.4*sin(330)}) -- ({cos(330)+1},2.3)  
    ;       
\draw[green,thick]  ({cos(210)+1},{0.4*sin(210)})--({cos(210)+1},-0.1)
    arc
    [
        start angle=0,
        end angle=90,
        x radius=(cos(210)+1),
        y radius =.2
    ]  ;  
\draw[brown,thick,decoration={markings, mark=at position 0.5 with {\arrow{>}}},
        postaction={decorate}]  (0,0.4) 
    arc
    [
        start angle=90,
        end angle=0,
        x radius=1,
        y radius =.3
    ] -- (1,-0.4);  
\draw[orange,thick,decoration={markings, mark=at position 0.5 with {\arrow{<}}},
        postaction={decorate}]  ({cos(330)-2},{0.4*sin(330)}) -- ({cos(330)-2},.8)
    arc[
        start angle=180,
        end angle=90,
        x radius=((3-cos(330))/2),
        y radius =.5
    ]  
        arc[
      start angle=270,
        end angle=360,
        x radius=((3-cos(330))/2),
        y radius =.3
    ]  
    -- (1,2.3);
\draw[lightgray,thick,decoration={markings, mark=at position 0.5 with {\arrow{<}}},
        postaction={decorate}]  (-2,-0.4) -- (-2,2.3) ;     
\draw[brown,thick,decoration={markings, mark=at position 0.5 with {\arrow{>}}},
        postaction={decorate}]  ({cos(210)-2},{0.4*sin(210)}) -- ({cos(210)-2},1)
        arc
    [
        start angle=0,
        end angle=90,
        x radius=(cos(210)+1),
        y radius =.2
    ]  ;     
\draw[green,thick]   (-3,1.8) arc
    [
        start angle=270,
        end angle=360,
        x radius=(cos(210)+1),
        y radius =.2
    ] -- ({cos(210)-2},2.3);  
\end{scope}    
\filldraw[fill=blue!10!white, blue!10!white] (-1,4.2) arc
    [
        start angle=0,
        end angle=360,
        x radius=1,
        y radius =.4
    ] ;       
    \filldraw[fill=red!10!white, draw=red!10!white] (2,4.2) arc
    [
        start angle=0,
        end angle=360,
        x radius=1,
        y radius =.4
    ]  ;     
\end{tikzpicture}
\]

\end{remark}
We also have the following generalization of Lemma \ref{tau-isomorphism}.
\begin{lemma}\label{lem:n-invertible}
Given a 4-simplex in $\mathcal{C}$ with filler $\Sigma_{ijklm}$, let  $\Upsilon_{ijkl}$ be the fillers for its faces, $\Xi_{ijk}$ be the fillers of its 2-simplices, and $f_{ij}$ be its 1-simplices. Then
\[
n_{\Sigma_{ijklm}}=\mathrm{tr}(M_{f_{im}}^{-1}\circ M_{f_{lm}}\circ M_{f_{kl}}\circ M_{f_{jk}}\circ M_{f_{ij}}).
\]
is an invertible object of  $\mathcal{V}$ and we have distinguished isomorphisms
\[
\kappa_{\Upsilon_{ijkl}}^{\Xi_{ilm}}\colon m_{\Upsilon_{ijkl}}\otimes l_{\Xi_{ilm}}\xrightarrow{\sim} n_{\Sigma_{ijklm}}
\]
\[
\kappa_{\Upsilon_{iklm}}^{\Xi_{ijk}}\colon l_{\Xi_{ijk}}\otimes m_{\Upsilon_{iklm}}\xrightarrow{\sim} n_{\Sigma_{ijklm}}
\]
\end{lemma}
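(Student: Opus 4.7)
The plan is to mimic the construction and proof of Lemma \ref{tau-isomorphism}, one dimension higher in the combinatorial sense: whereas $\tau_{\Xi_{ijk},\Xi_{ikl}}$ was a bordism gluing two triangular ($l$-type) circles along the common defect $M_{f_{ik}}$ to produce the square ($m$-type) circle, each $\kappa$ should be a bordism gluing a triangular circle and a square circle along a common defect line to produce the pentagonal ($n$-type) circle.

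Concretely, I would first construct $\kappa_{\Upsilon_{ijkl}}^{\Xi_{ilm}}$ as follows. The source $m_{\Upsilon_{ijkl}}\otimes l_{\Xi_{ilm}}$ is the disjoint union of two circles, the first carrying the defect lines of $M_{f_{ij}},M_{f_{jk}},M_{f_{kl}},M_{f_{il}}^{-1}$ and the second carrying $M_{f_{il}},M_{f_{lm}},M_{f_{im}}^{-1}$. I take the standard ``pair of pants'' bordism that fuses the two circles into one, and route the $M_{f_{il}}$ defect of the second circle and the $M_{f_{il}}^{-1}$ defect of the first circle through the pants so that they meet and annihilate inside the bulk (exactly as in the middle region of the picture of $\tau_{\Xi_{ijk},\Xi_{ikl}}$). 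The outgoing boundary circle then carries the defects $M_{f_{ij}},M_{f_{jk}},M_{f_{kl}},M_{f_{lm}},M_{f_{im}}^{-1}$ in the cyclic order required by $n_{\Sigma_{ijklm}}$. The morphism $\kappa_{\Upsilon_{iklm}}^{\Xi_{ijk}}$ is built analogously, now annihilating the $M_{f_{ik}}$ and $M_{f_{ik}}^{-1}$ defects of $l_{\Xi_{ijk}}$ and $m_{\Upsilon_{iklm}}$.

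To show each $\kappa$ is an isomorphism, I would construct the candidate inverse $\hat\kappa$ as the time-reflected bordism (upside-down pair of pants with the corresponding dual defect configuration), and then verify that $\kappa\circ\hat\kappa$ and $\hat\kappa\circ\kappa$ reduce to identities. As in Lemma \ref{tau-isomorphism}, the two compositions give, respectively, a sphere-with-defects enclosing an $M_{f_{il}}$-bubble (resp.\ $M_{f_{ik}}$-bubble) and a closed region of a single color sitting inside the identity cylinder; these evaluate to the identity by invoking the invertibility of the relevant 1-morphism $M_{f_{il}}$ (resp.\ $M_{f_{ik}}$) and the invertibility of the enclosed object $A_{X_i}$ (or another object, depending on the precise drawing). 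This is word-for-word the 2-step cancellation argument used to prove $\hat\tau\circ\tau=\mathrm{id}$ and $\tau\circ\hat\tau=\mathrm{id}$ in Lemma \ref{tau-isomorphism}.

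Once both $\kappa$'s are isomorphisms, the invertibility of $n_{\Sigma_{ijklm}}$ is immediate: $n_{\Sigma_{ijklm}}\cong m_{\Upsilon_{ijkl}}\otimes l_{\Xi_{ilm}}$, and both tensor factors are already known to be invertible (by Corollary \ref{cor:m} and Lemma \ref{lem:is-invertible2}). Alternatively, invertibility of $n$ can be checked directly by the same ``close up the defect circle and cancel adjacent inverse pairs'' argument as in Lemma \ref{lem:is-invertible2}, which in fact is the abstract shadow of the geometric verification above. The main obstacle is purely notational: drawing the surfaces with five defect lines legibly is cumbersome, but conceptually nothing new happens beyond Lemma \ref{tau-isomorphism}; the combinatorics of which defects cancel is dictated by the face structure of the 4-simplex $\Sigma_{ijklm}$, and the two distinguished $\kappa$'s correspond to the two ways of splitting a pentagon into a triangle and a quadrilateral that share an edge labelled by $f_{il}$ or $f_{ik}$.
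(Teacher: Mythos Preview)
Your proposal is correct and matches the paper's own proof essentially verbatim: the paper simply states that invertibility of $n_{\Sigma_{ijklm}}$ is analogous to Lemma~\ref{lem:is-invertible2}, that the $\kappa$'s are defined by surfaces with defects analogous to those defining $\tau_{\Xi_{ijk},\Xi_{ikl}}$, and that the proof they are isomorphisms is analogous to Lemma~\ref{tau-isomorphism}. Your write-up is in fact more detailed than the paper's, spelling out which defect lines annihilate and which invertibilities are invoked at each step.
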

\begin{proof}
The proof of the invertibility of  $n_{\Sigma_{ijklm}}=\mathrm{tr}(M_{f_{im}}^{-1}\circ M_{f_{lm}}\circ M_{f_{kl}}\circ M_{f_{jk}}\circ M_{f_{ij}})$ is analogous to the proof of Lemma \ref{lem:is-invertible2}. The morphisms $\kappa_{\Upsilon_{ijkl}}^{\Xi_{ilm}}$ and $\kappa_{\Upsilon_{iklm}}^{\Xi_{ijk}}$ are defined by surfaces with defects analogous to those defining $\tau_{\Xi_{ijk},\Xi_{ikl}}$. The proof that $\kappa_{\Upsilon_{ijkl}}^{\Xi_{ilm}}$ and $\kappa_{\Upsilon_{iklm}}^{\Xi_{ijk}}$ are isomorphisms is analogous to the proof of Lemma \ref{tau-isomorphism}.
\end{proof}
\begin{lemma}\label{lem:two-kappas}
In the same notation as in Lemma \ref{lem:n-invertible} above, the diagram
\[
\begin{tikzcd}
    l_{\Xi_{ijk}}\otimes l_{\Xi_{ikl}}\otimes l_{\Xi_{ilm}}\ar[rrr,"\mathrm{id}\otimes \tau_{\Xi_{ikl},\Xi_{ilm}}" ]\ar[d,"\tau_{\Xi_{ijk},\Xi_{ikl}}\otimes\mathrm{id}"']&&&
    l_{\Xi_{ijk}}\otimes m_{\Upsilon_{iklm}}\ar[d,"\kappa_{\Upsilon_{iklm}}^{\Xi_{ijk}}"]\\
    m_{\Upsilon_{ijkl}}\otimes l_{\Xi_{ilm}}\ar[rrr,"\kappa_{\Upsilon_{ijkl}}^{\Xi_{ilm}}"'] &&& n_{\Sigma_{ijklm}}
\end{tikzcd} 
\]
commutes.
\end{lemma}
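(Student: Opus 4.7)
The plan is to exhibit both composites as the morphism represented by a single 2-dimensional bordism with defects, so that commutativity becomes an instance of the isotopy-invariance of such bordisms. The target $n_{\Sigma_{ijklm}}$ is given by the ``pentagon'' surface whose boundary circle carries five defect points, corresponding (in cyclic order) to $M_{f_{ij}}$, $M_{f_{jk}}$, $M_{f_{kl}}$, $M_{f_{lm}}$ and $M_{f_{im}}^{-1}$. This pentagon admits two natural ``Y-shaped'' decompositions into three triangles: one using the internal chord $(i,l)$, and one using the internal chord $(i,k)$. The former computes $n_{\Sigma_{ijklm}}$ as the gluing of the quadrilateral $m_{\Upsilon_{ijkl}}$ (itself built from the triangles $l_{\Xi_{ijk}}$ and $l_{\Xi_{ikl}}$ along the edge $(i,k)$) with the triangle $l_{\Xi_{ilm}}$ along the edge $(i,l)$. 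The latter computes $n_{\Sigma_{ijklm}}$ as the gluing of the triangle $l_{\Xi_{ijk}}$ with the quadrilateral $m_{\Upsilon_{iklm}}$ (built from $l_{\Xi_{ikl}}$ and $l_{\Xi_{ilm}}$ along $(i,l)$) along the edge $(i,k)$.

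Explicitly, I would first draw the left-then-bottom composite $\kappa_{\Upsilon_{ijkl}}^{\Xi_{ilm}} \circ (\tau_{\Xi_{ijk},\Xi_{ikl}}\otimes \mathrm{id})$ by stacking the bordism defining $\tau_{\Xi_{ijk},\Xi_{ikl}}$ (which glues the first two triangles across the $(i,k)$ defect line) on top of three outgoing trace circles, and then stacking the bordism for $\kappa_{\Upsilon_{ijkl}}^{\Xi_{ilm}}$ (which glues the resulting quadrilateral to the third triangle across the $(i,l)$ line). Reading this composite surface from bottom to top gives a bordism from $l_{\Xi_{ijk}}\otimes l_{\Xi_{ikl}}\otimes l_{\Xi_{ilm}}$ to $n_{\Sigma_{ijklm}}$ with internal defect network consisting of the three ``interior'' chords $(i,k)$, $(i,l)$ together with the full set of outer defects of the pentagon.

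Next I would draw the top-then-right composite $\kappa_{\Upsilon_{iklm}}^{\Xi_{ijk}}\circ (\mathrm{id}\otimes \tau_{\Xi_{ikl},\Xi_{ilm}})$ in exactly the same way, but performing the $(i,l)$-gluing first and the $(i,k)$-gluing afterwards. The resulting surface has the same three incoming circles, the same outgoing circle, and the same interior defect network: the two chords $(i,k)$ and $(i,l)$ appear in both pictures, only sitting at different vertical heights. Since the relative height of the two gluings is a choice of Morse function on the same abstract bordism with defects, the two composite surfaces are related by a (unique up to isotopy) diffeomorphism relative to boundary, exactly as in the second picture given after the definition of $\tau_{\Xi_{jkl},\Xi_{ijl}}$ in the proof of the preceding lemma.

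The main obstacle is purely presentational: to make the diffeomorphism between the two stacked bordisms manifest rather than merely plausible. I would address this by an intermediate picture in which both internal defect chords $(i,k)$ and $(i,l)$ appear simultaneously inside a single pentagonal surface, with no a priori ordering between them; the two composites are then obtained from this picture by two different choices of ``time slicing''. Once the two surfaces are recognized as representing the same bordism with defects, invariance of the symmetric monoidal 2-category evaluation under such isotopies yields the equality of morphisms and hence the commutativity of the diagram.
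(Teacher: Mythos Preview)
Your proposal is correct and follows essentially the same approach as the paper: both composites are represented by stacked pair-of-pants bordisms with defects that differ only in the relative heights of the two gluings, and the paper's proof consists precisely of drawing the two resulting surfaces and declaring them manifestly diffeomorphic. Your additional commentary about Morse functions and time-slicing makes explicit what the paper leaves to the reader's eye, but the argument is the same.
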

\begin{proof}
    The commutativity of the diagram is given by the following manifest diffeomorphism of surfaces with defect lines.
\[
\begin{tikzpicture}
\filldraw[fill=red!40!white, draw=red!40!white] (-3,0) arc
    [
        start angle=180,
        end angle=210,
        x radius=1,
        y radius =.4
    ]    -- ({cos(210)-2},{.4*sin(210)+4.4}) 
    -- (-3,4.3)
     --cycle;
     \filldraw[fill=orange!40!white, draw=orange!40!white] ({cos(210)-2},{.4*sin(210)}) arc
    [
        start angle=210,
        end angle=270,
        x radius=1,
        y radius =.4
    ]    -- ({cos(270)-2},{.4*sin(270)+4.4}) --  ({cos(210)-2},{.4*sin(210)+4.4})
     --cycle;
     \filldraw[fill=green!40!white, draw=green!40!white] ({cos(270)-2},{.4*sin(270)}) arc
    [
        start angle=270,
        end angle=330,
        x radius=1,
        y radius =.4
    ]    -- ({cos(330)-2},{.4*sin(330)+4.4}) -- ({cos(270)-2},{.4*sin(270)+4.4})
     --cycle;
     \filldraw[fill=red!40!white, draw=red!40!white] ({cos(330)-2},{.4*sin(330)}) arc
    [
        start angle=330,
        end angle=360,
        x radius=1,
        y radius =.4
    ]    -- ({cos(360)-2},{.4*sin(360)+2.4}) -- ({cos(330)-2},{.4*sin(330)+2.4})
    --cycle;
    \filldraw[fill=red!40!white, draw=red!40!white] (0,0) arc
    [
        start angle=180,
        end angle=210,
        x radius=1,
        y radius =.4
    ]    -- ({cos(210)+1},{.4*sin(210)+2.4}) -- ({cos(180)+1},{.4*sin(180)+2.4})
     --cycle;
     \filldraw[fill=green!40!white, draw=green!40!white] ({cos(210)+1},{.4*sin(210)}) arc
    [
        start angle=210,
        end angle=270,
        x radius=1,
        y radius =.4
    ]    -- ({cos(270)+1},{.4*sin(270)+4.4}) -- ({cos(210)+.95},{.4*sin(210)+4.4})
     --cycle;
     \filldraw[fill=blue!40!white, draw=blue!40!white] ({cos(270)+1},{.4*sin(270)}) arc
    [
        start angle=270,
        end angle=330,
        x radius=1,
        y radius =.4
    ]    -- ({cos(330)+1},{.4*sin(330)+4.4}) -- ({cos(270)+1},{.4*sin(270)+4.4})
     --cycle;
     \filldraw[fill=red!40!white, draw=red!40!white] ({cos(330)+1},{.4*sin(330)}) arc
    [
        start angle=330,
        end angle=360,
        x radius=1,
        y radius =.4
    ]    -- ({cos(360)+1},{.4*sin(360)+2.3}) -- ({cos(330)+1},{.4*sin(330)+2.4})
    --cycle;
   \filldraw[fill=red!40!white, draw=red!40!white] ({cos(330)-2},0) -- (-1,0.5) arc
    [
        start angle=180,
        end angle=0,
        x radius=0.5,
        y radius =.2
    ]    -- ({cos(210)+.99},0) -- ({cos(210)+.99},2.4) --  ({cos(330)-2},2.4)
    --cycle;
  \filldraw[fill=green!40!white, draw=green!40!white] ({cos(330)-2.025},1)  arc
    [
        start angle=180,
        end angle=0,
        x radius=0.637,
        y radius =.3
    ]    -- ({cos(210)+1},0) -- ({cos(210)+1},4.4) --  ({cos(330)-2},4.4)
    --cycle;  
   
    \filldraw[fill=red!10!white, draw=red!40!white] ({cos(330)+4},{.4*sin(330)}) arc
    [
        start angle=330,
        end angle=360,
        x radius=1,
        y radius =.4
    ]    -- ({cos(360)+4},{.4*sin(360)+4.3}) -- ({cos(330)+4},{.4*sin(330)+4.4})
    --cycle;

         \filldraw[fill=red!40!white, draw=red!40!white] ({cos(330)+1},{.4*sin(330)}) arc
    [
        start angle=330,
        end angle=360,
        x radius=1,
        y radius =.4
    ]    -- ({cos(360)+1},{.4*sin(360)+3.4}) -- ({cos(330)+1},{.4*sin(330)+3.4})
    --cycle;
    \filldraw[fill=red!40!white, draw=red!40!white] (3,0) arc
    [
        start angle=180,
        end angle=210,
        x radius=1,
        y radius =.4
    ]    -- ({cos(210)+4},{.4*sin(210)+3.4}) -- ({cos(180)+4},{.4*sin(180)+3.4})
     --cycle;
     \filldraw[fill=blue!40!white, draw=blue!40!white] ({cos(210)+4},{.4*sin(210)}) arc
    [
        start angle=210,
        end angle=270,
        x radius=1,
        y radius =.4
    ]    -- ({cos(270)+4},{.4*sin(270)+4.4}) -- ({cos(210)+3.95},{.4*sin(210)+4.4})
     --cycle;
     \filldraw[fill=brown!40!white, draw=brown!40!white] ({cos(270)+4},{.4*sin(270)}) arc
    [
        start angle=270,
        end angle=330,
        x radius=1,
        y radius =.4
    ]    -- ({cos(330)+4},{.4*sin(330)+4.4}) -- ({cos(270)+4},{.4*sin(270)+4.4})
     --cycle;
     \filldraw[fill=red!40!white, draw=red!40!white] ({cos(330)+4},{.4*sin(330)}) arc
    [
        start angle=330,
        end angle=360,
        x radius=1,
        y radius =.4
    ]    -- ({cos(360)+4},{.4*sin(360)+4.3}) -- ({cos(330)+4},{.4*sin(330)+4.4})
    --cycle;
   \filldraw[fill=red!40!white, draw=red!40!white] ({cos(330)+1},2) -- (2,2.5) arc
    [
        start angle=180,
        end angle=0,
        x radius=0.5,
        y radius =.2
    ]    -- ({cos(210)+3.99},2) -- ({cos(210)+3.99},4.4) --  ({cos(330)+1},4.4)
    --cycle;
  \filldraw[fill=blue!40!white, draw=blue!40!white] ({cos(330)+1-.025},3)  arc
    [
        start angle=180,
        end angle=0,
        x radius=0.637,
        y radius =.3
    ]    -- ({cos(210)+4},2) -- ({cos(210)+4},4.4) --  ({cos(330)+1},4.4)
    --cycle;      
   \draw[green,thick,decoration={markings, mark=at position 0.5 with {\arrow{<}}},
        postaction={decorate}]  ({cos(210)-2},{.4*sin(210)})  -- ({cos(210)-2},{.4*sin(210)+4.4}) ;
    \draw[lightgray,thick,decoration={markings, mark=at position 0.5 with {\arrow{>}}},
        postaction={decorate}]  ({cos(270)-2},{.4*sin(270)}) -- ({cos(270)-2},{.4*sin(270)+4.41}) ;
    \draw[black,thick]  ({cos(330)-2},{.4*sin(330)}) -- ({cos(330)-2},{.4*sin(330)+1.2}) ;   
    \draw[black,thick,decoration={markings, mark=at position 0.5 with {\arrow{>}}},
        postaction={decorate}] ({cos(330)-1.995},1)  arc
    [
        start angle=180,
        end angle=0,
        x radius=0.630,
        y radius =.3
    ] ; 
    \draw[black,thick]  ({cos(210)+.99+3},{.4*sin(210)})  -- ({cos(210)+.99+3},{.4*sin(210)+3.2}) ;
    \draw[yellow,thick,decoration={markings, mark=at position 0.5 with {\arrow{>}}},
        postaction={decorate}]  ({cos(270)+1+3},{.4*sin(270)}) -- ({cos(270)+1+3},{.4*sin(270)+4.31}) ;
    \draw[black,thick]  ({cos(330)+1},{.4*sin(330)}) -- ({cos(330)+1},{.4*sin(330)+3.2}) ;
       
    \draw[black,thick,decoration={markings, mark=at position 0.5 with {\arrow{>}}},
        postaction={decorate}] ({cos(330)-1.995+3},3)  arc
    [
        start angle=180,
        end angle=0,
        x radius=0.630,
        y radius =.3
    ] ; 
    \draw[orange,thick,decoration={markings, mark=at position 0.5 with {\arrow{>}}},
        postaction={decorate}]  ({cos(270)+1},{.4*sin(270)}) -- ({cos(270)+1},{.4*sin(270)+4.31}) ;    
    \draw[black,thick]  ({cos(210)+.99},{.4*sin(210)})  -- ({cos(210)+.99},{.4*sin(210)+1.2}) ;

    \draw[black,thick]  ({cos(330)-2},{.4*sin(330)}) -- ({cos(330)-2},{.4*sin(330)+1.2}) ;   
    \draw[black,thick,decoration={markings, mark=at position 0.5 with {\arrow{>}}},
        postaction={decorate}] ({cos(330)-1.995},1)  arc
    [
        start angle=180,
        end angle=0,
        x radius=0.630,
        y radius =.3
    ] ; 
\draw[blue,thick,decoration={markings, mark=at position 0.5 with {\arrow{>}}},
        postaction={decorate}]  ({cos(330)+4},{.4*sin(330)}) -- ({cos(330)+4},{.4*sin(330)+4.4}) ;
    
    \filldraw[fill=red!10!white, draw=red!10!white] (5,4.3) arc
    [
        start angle=0,
        end angle=360,
        x radius=4,
        y radius =.6
    ];         
\end{tikzpicture}
\]
\[
{\rotatebox[origin=c]{90}{$\cong$}}
\]
\[
\begin{tikzpicture}
\begin{scope}[xscale=-1,yscale=1]
\filldraw[fill=red!40!white, draw=red!40!white] (-3,0) arc
    [
        start angle=180,
        end angle=210,
        x radius=1,
        y radius =.4
    ]    -- ({cos(210)-2},{.4*sin(210)+4.4}) 
    -- (-3,4.3)
     --cycle;
     \filldraw[fill=brown!40!white, draw=brown!40!white] ({cos(210)-2},{.4*sin(210)}) arc
    [
        start angle=210,
        end angle=270,
        x radius=1,
        y radius =.4
    ]    -- ({cos(270)-2},{.4*sin(270)+4.4}) --  ({cos(210)-2},{.4*sin(210)+4.4})
     --cycle;
     \filldraw[fill=blue!40!white, draw=blue!40!white] ({cos(270)-2},{.4*sin(270)}) arc
    [
        start angle=270,
        end angle=330,
        x radius=1,
        y radius =.4
    ]    -- ({cos(330)-2},{.4*sin(330)+4.4}) -- ({cos(270)-2},{.4*sin(270)+4.4})
     --cycle;
     \filldraw[fill=red!40!white, draw=red!40!white] ({cos(330)-2},{.4*sin(330)}) arc
    [
        start angle=330,
        end angle=360,
        x radius=1,
        y radius =.4
    ]    -- ({cos(360)-2},{.4*sin(360)+2.4}) -- ({cos(330)-2},{.4*sin(330)+2.4})
    --cycle;
    \filldraw[fill=red!40!white, draw=red!40!white] (0,0) arc
    [
        start angle=180,
        end angle=210,
        x radius=1,
        y radius =.4
    ]    -- ({cos(210)+1},{.4*sin(210)+2.4}) -- ({cos(180)+1},{.4*sin(180)+2.4})
     --cycle;
     \filldraw[fill=blue!40!white, draw=blue!40!white] ({cos(210)+1},{.4*sin(210)}) arc
    [
        start angle=210,
        end angle=270,
        x radius=1,
        y radius =.4
    ]    -- ({cos(270)+1},{.4*sin(270)+4.4}) -- ({cos(210)+.95},{.4*sin(210)+4.4})
     --cycle;
     \filldraw[fill=green!40!white, draw=green!40!white] ({cos(270)+1},{.4*sin(270)}) arc
    [
        start angle=270,
        end angle=330,
        x radius=1,
        y radius =.4
    ]    -- ({cos(330)+1},{.4*sin(330)+4.4}) -- ({cos(270)+1},{.4*sin(270)+4.4})
     --cycle;
     \filldraw[fill=red!40!white, draw=red!40!white] ({cos(330)+1},{.4*sin(330)}) arc
    [
        start angle=330,
        end angle=360,
        x radius=1,
        y radius =.4
    ]    -- ({cos(360)+1},{.4*sin(360)+2.3}) -- ({cos(330)+1},{.4*sin(330)+2.4})
    --cycle;
   \filldraw[fill=red!40!white, draw=red!40!white] ({cos(330)-2},0) -- (-1,0.5) arc
    [
        start angle=180,
        end angle=0,
        x radius=0.5,
        y radius =.2
    ]    -- ({cos(210)+.99},0) -- ({cos(210)+.99},2.4) --  ({cos(330)-2},2.4)
    --cycle;
  \filldraw[fill=blue!40!white, draw=blue!40!white] ({cos(330)-2.025},1)  arc
    [
        start angle=180,
        end angle=0,
        x radius=0.637,
        y radius =.3
    ]    -- ({cos(210)+1},0) -- ({cos(210)+1},4.4) --  ({cos(330)-2},4.4)
    --cycle;  
   
    \filldraw[fill=red!10!white, draw=red!40!white] ({cos(330)+4},{.4*sin(330)}) arc
    [
        start angle=330,
        end angle=360,
        x radius=1,
        y radius =.4
    ]    -- ({cos(360)+4},{.4*sin(360)+4.3}) -- ({cos(330)+4},{.4*sin(330)+4.4})
    --cycle;

         \filldraw[fill=red!40!white, draw=red!40!white] ({cos(330)+1},{.4*sin(330)}) arc
    [
        start angle=330,
        end angle=360,
        x radius=1,
        y radius =.4
    ]    -- ({cos(360)+1},{.4*sin(360)+3.4}) -- ({cos(330)+1},{.4*sin(330)+3.4})
    --cycle;
    \filldraw[fill=red!40!white, draw=red!40!white] (3,0) arc
    [
        start angle=180,
        end angle=210,
        x radius=1,
        y radius =.4
    ]    -- ({cos(210)+4},{.4*sin(210)+3.4}) -- ({cos(180)+4},{.4*sin(180)+3.4})
     --cycle;
     \filldraw[fill=green!40!white, draw=green!40!white] ({cos(210)+4},{.4*sin(210)}) arc
    [
        start angle=210,
        end angle=270,
        x radius=1,
        y radius =.4
    ]    -- ({cos(270)+4},{.4*sin(270)+4.4}) -- ({cos(210)+3.95},{.4*sin(210)+4.4})
     --cycle;
     \filldraw[fill=orange!40!white, draw=orange!40!white] ({cos(270)+4},{.4*sin(270)}) arc
    [
        start angle=270,
        end angle=330,
        x radius=1,
        y radius =.4
    ]    -- ({cos(330)+4},{.4*sin(330)+4.4}) -- ({cos(270)+4},{.4*sin(270)+4.4})
     --cycle;
     \filldraw[fill=red!40!white, draw=red!40!white] ({cos(330)+4},{.4*sin(330)}) arc
    [
        start angle=330,
        end angle=360,
        x radius=1,
        y radius =.4
    ]    -- ({cos(360)+4},{.4*sin(360)+4.3}) -- ({cos(330)+4},{.4*sin(330)+4.4})
    --cycle;
   \filldraw[fill=red!40!white, draw=red!40!white] ({cos(330)+1},2) -- (2,2.5) arc
    [
        start angle=180,
        end angle=0,
        x radius=0.5,
        y radius =.2
    ]    -- ({cos(210)+3.99},2) -- ({cos(210)+3.99},4.4) --  ({cos(330)+1},4.4)
    --cycle;
  \filldraw[fill=green!40!white, draw=green!40!white] ({cos(330)+1-.025},3)  arc
    [
        start angle=180,
        end angle=0,
        x radius=0.637,
        y radius =.3
    ]    -- ({cos(210)+4},2) -- ({cos(210)+4},4.4) --  ({cos(330)+1},4.4)
    --cycle;      
   \draw[blue,thick,decoration={markings, mark=at position 0.5 with {\arrow{>}}},
        postaction={decorate}]  ({cos(210)-2},{.4*sin(210)})  -- ({cos(210)-2},{.4*sin(210)+4.4}) ;
    \draw[yellow,thick,decoration={markings, mark=at position 0.5 with {\arrow{>}}},
        postaction={decorate}]  ({cos(270)-2},{.4*sin(270)}) -- ({cos(270)-2},{.4*sin(270)+4.41}) ;
    \draw[black,thick]  ({cos(330)-2},{.4*sin(330)}) -- ({cos(330)-2},{.4*sin(330)+1.2}) ;   
    \draw[black,thick,decoration={markings, mark=at position 0.5 with {\arrow{<}}},
        postaction={decorate}] ({cos(330)-1.995},1)  arc
    [
        start angle=180,
        end angle=0,
        x radius=0.630,
        y radius =.3
    ] ; 
    \draw[black,thick]  ({cos(210)+.99+3},{.4*sin(210)})  -- ({cos(210)+.99+3},{.4*sin(210)+3.2}) ;
    \draw[lightgray,thick,decoration={markings, mark=at position 0.5 with {\arrow{>}}},
        postaction={decorate}]  ({cos(270)+1+3},{.4*sin(270)}) -- ({cos(270)+1+3},{.4*sin(270)+4.31}) ;
    \draw[black,thick]  ({cos(330)+1},{.4*sin(330)}) -- ({cos(330)+1},{.4*sin(330)+3.2}) ;
       
    \draw[black,thick,decoration={markings, mark=at position 0.5 with {\arrow{<}}},
        postaction={decorate}] ({cos(330)-1.995+3},3)  arc
    [
        start angle=180,
        end angle=0,
        x radius=0.630,
        y radius =.3
    ] ; 
    \draw[orange,thick,decoration={markings, mark=at position 0.5 with {\arrow{>}}},
        postaction={decorate}]  ({cos(270)+1},{.4*sin(270)}) -- ({cos(270)+1},{.4*sin(270)+4.31}) ;    
    \draw[black,thick]  ({cos(210)+.99},{.4*sin(210)})  -- ({cos(210)+.99},{.4*sin(210)+1.2}) ;

    \draw[black,thick]  ({cos(330)-2},{.4*sin(330)}) -- ({cos(330)-2},{.4*sin(330)+1.2}) ;   
    
\draw[green,thick,decoration={markings, mark=at position 0.5 with {\arrow{<}}},
        postaction={decorate}]  ({cos(330)+4},{.4*sin(330)}) -- ({cos(330)+4},{.4*sin(330)+4.4}) ;
    
    \filldraw[fill=red!10!white, draw=red!10!white] (5,4.3) arc
    [
        start angle=0,
        end angle=360,
        x radius=4,
        y radius =.6
    ];         
\end{scope}
\end{tikzpicture}
\]
\end{proof}

We can now show that the invertible objects $l_{\Xi_{ijk}}$ satisfy the first coherence condition for projective 2-representations. This is the content of the following.
\begin{lemma}
The diagram
\begin{equation}\label{eq:coherenceC2-drawings}
    \begin{tikzcd}[column sep={8em,between origins}]
    l_{\Xi_{jkl}}\otimes l_{\Xi_{ijl}}\otimes l_{\Xi{ilm}}\ar[dd,"\mathrm{id}\otimes \phi_{\Upsilon_{ijlm}}"']&& l_{\Xi_{ijk}}\otimes l_{\Xi_{ikl}}\otimes l_{\Xi_{ilm}}\ar[d,"\mathrm{id}\otimes \phi_{\Upsilon_{iklm}}"]\ar[ll,"\phi_{\Upsilon_{ijkl}}\otimes\mathrm{id}"']&\\
    {}&&l_{\Xi_{ijk}}\otimes l_{\Xi_{klm}}\otimes l_{\Xi_{ikm}}\ar[d,"\wr"]\\
    l_{\Xi_{jkl}}\otimes l_{\Xi_{jlm}}\otimes l_{\Xi_{ijm}}\ar[dr,"\phi_{\Upsilon_{jklm}}\otimes\mathrm{id}"']&&
    l_{\Xi_{klm}}\otimes l_{\Xi_{ijk}} \otimes l_{\Xi_{ikm}}
    \ar[dl,"\mathrm{id}\otimes \phi_{\Upsilon_{ijkm}}"]\\
    &l_{\Xi_{klm}}\otimes l_{\Xi_{jkm}}\otimes l_{\Xi_{ijm}}
    \end{tikzcd}
    \end{equation}
commutes, for every 4-simplex in $\mathcal{C}$.
\end{lemma}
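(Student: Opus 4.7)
The plan is to lift the pentagon \eqref{eq:coherenceC2-drawings} to a prism over the invertible object $n_{\Sigma_{ijklm}}$ associated to the 4-simplex by Lemma \ref{lem:n-invertible}, and then read off commutativity from the invertibility of the $\kappa$ isomorphisms that form the walls of the prism.

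First I would extend Lemma \ref{lem:n-invertible} to produce, for each of the five 3-faces of $\Sigma_{ijklm}$, a distinguished isomorphism from the corresponding triple tensor product $l_{\Xi_\bullet}\otimes l_{\Xi_\bullet}\otimes l_{\Xi_\bullet}$ appearing in \eqref{eq:coherenceC2-drawings} down to $n_{\Sigma_{ijklm}}$. Each such isomorphism is obtained by pre-composing one of the two $\kappa$'s of Lemma \ref{lem:n-invertible} with a $\tau^{\mathrm{odd}}$ or $\tau^{\mathrm{even}}$ applied to the appropriate tensor factor; graphically it corresponds to cutting the surface with defects representing $n_{\Sigma_{ijklm}}$ into three ``triangular'' pieces according to the chosen face. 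In this way each of the six vertices of the pentagon acquires a canonical isomorphism to the single invertible object $n_{\Sigma_{ijklm}}$.

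Next I would prove the five ``two-kappas'' compatibilities: for each pair of adjacent 3-faces of $\Sigma_{ijklm}$, sharing a common 2-face $\Xi_\bullet$ with 3-filler $\Upsilon_\bullet$, the two induced isomorphisms into $n_{\Sigma_{ijklm}}$ differ by the map $\phi_{\Upsilon_\bullet}$ (tensored with an identity in the appropriate slot). The statement for one specific pair is exactly Lemma \ref{lem:two-kappas}; the remaining four are proved by the identical diffeomorphism-of-surfaces argument, with the pants decomposition merely rotated to focus on a different pair of faces. Once these are established, each of the five edges of the pentagon becomes commutative over $n_{\Sigma_{ijklm}}$, and the invertibility of the $\kappa$'s forces the two compositions around the pentagon to coincide.

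The main obstacle is combinatorial rather than geometric: one has to identify precisely which 3-face of $\Sigma_{ijklm}$ is responsible for each of the five edges of \eqref{eq:coherenceC2-drawings} and match the positions of the tensor factors correctly, keeping track of the symmetry used to cyclically permute factors (as already used in the second drawing of $\tau_{\Xi_{jkl},\Xi_{ijl}}$). The geometric content is light, because, read as a sequence of elementary ``change-of-decomposition'' moves on the common surface $n_{\Sigma_{ijklm}}$, the diagram \eqref{eq:coherenceC2-drawings} is exactly the standard pentagon relating the five binary triangulations of a planar pentagon, whose commutativity reduces to the uniqueness up to isotopy of the underlying bordism with defects.
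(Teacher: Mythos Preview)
Your proposal is correct and is essentially the same argument as the paper's: the paper factors the pentagon through the central invertible object $n_{\Sigma_{ijklm}}$ via an intermediate ring of $l\otimes m$ and $m\otimes l$ objects, observes that the triangular cells commute by definition of $\phi$ in terms of the $\tau$'s, and that the quadrangular (and one pentagonal) cells commute by Lemma~\ref{lem:two-kappas} and its evident variants. Your remark that the only real work is the combinatorial bookkeeping of tensor-factor positions and the one symmetry swap matches the paper's own caveat about the commutation isomorphism on the right of the diagram.
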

\begin{proof}
As above, let us denote by $\Sigma_{ijklm}$ the filler for the 4-simplex, by $\Upsilon_{ijkl}$ the fillers for its faces, by $\Xi_{ijk}$ the fillers of its 2-simplices, and by $f_{ij}$ its 1-simplices.
Recall from Proposition \ref{prop:wannabe-is-projective}, Corollary \ref{cor:m} and Lemma \ref{lem:n-invertible} the definitions of the invertible objects  $l_{\Xi_{ijk}}$, $m_{\Upsilon_{ijkl}}$ and $n_{\Sigma_{ijklm}}$ of $\mathcal{V}$.  Then we have a diagram of the form
\begin{equation}\label{eq:pentacle}
    \begin{tikzcd}[column sep={8em,between origins}]
    &l_{\Xi_{jkl}}\otimes l_{\Xi_{ijl}}\otimes l_{\Xi_{ilm}}\ar[ddl,"\mathrm{id}\otimes \phi_{\Upsilon_{ijlm}}"']\ar[d,"\mathrm{id}\otimes \tau"{description}]\ar[dr]&& l_{\Xi_{ijk}}\otimes l_{\Xi_{ikl}}\otimes l_{\Xi_{ilm}}\ar[d,"\mathrm{id}\otimes \tau"{description}]\ar[dl]\ar[ddr,"\mathrm{id}\otimes \phi_{\Upsilon_{iklm}}"]\ar[ll,"\phi_{\Upsilon_{ijlm}}\otimes\mathrm{id}"']&\\
    &l_{\Xi_{jkl}}\otimes m_{\Upsilon_{ijlm}}\ar[ddr,"\kappa"{description}]&m_{\Upsilon_{ijkl}}\otimes l_{\Xi_{ilm}}\ar[dd,"\kappa"{description}]&{l_{\Xi_{ijk}}\otimes m_{\Upsilon_{iklm}}}\ar[ddl,"\kappa"{description}]\\
    l_{\Xi_{jkl}}\otimes l_{\Xi_{jlm}}\otimes l_{\Xi_{ijm}}
    \ar[ddddrr,bend right=30,"\phi_{\Upsilon_{jklm}}\otimes\mathrm{id}"']
    \ar[rdd," \tau\otimes\mathrm{id}"{description}]\ar[ru,"\mathrm{id}\otimes \tau"{description}]&&&& l_{\Xi_{ijk}}\otimes l_{\Xi_{klm}}\otimes l_{\Xi_{ikm}}\ar[lu,"\mathrm{id}\otimes \tau"{description}]\ar[d,"\wr"]
    \\
    &&n_{\Sigma_{ijklm}}&& l_{\Xi_{klm}}\otimes l_{\Xi_{ijk}}\otimes l_{\Xi_{ikm}}\ar[dl,"\mathrm{id}\otimes \tau"{description}]\ar[dddll,bend left=20,"\mathrm{id}\otimes \phi_{\Upsilon_{ijkm}}"]\\
    &m_{\Upsilon_{jklm}}\otimes l_{\Xi_{ijm}}\ar[ur,"\kappa"{description}]&&l_{\Xi_{klm}}\otimes m_{\Upsilon_{ijkm}}\ar[ul,"\kappa"{description}]\\
    {}
    \\
    &&l_{\Xi_{klm}}\otimes l_{\Xi_{jkm}}\otimes l_{\Xi_{ijm}}\ar[uul," \tau\otimes\mathrm{id}"{description}]\ar[uur,"\mathrm{id}\otimes \tau"{description}]
    \end{tikzcd}
    \end{equation}  
where all arrows are isomorphisms. All the triangular subdiagrams commute by definition, and all the quadrangular subdiagrams and the pentagonal subdiagram on the right commute by Lemma \ref{lem:two-kappas}. Notice that the appearance of the commutation isomorphism $l_{\Xi_{ijk}}\otimes l_{\Xi_{klm}}\cong l_{\Xi_{klm}}\otimes l_{\Xi_{ijk}}$ on the right side of the diagram is solely due to the fact that to write tensor products in line an order of the factors has to be chosen. This order is clearly irrelevant, due to the symmetry of the monoidal structure on $\mathcal{V}$.
\end{proof}

\begin{lemma}\label{lemma:drawing-ev}
For any 2-simplex $\Xi_{ijk}$ in $\mathcal{C}$ we have a distinguished morphism
\[
\epsilon_{\Xi_{ijk}}\colon M_{f_{ik}}\otimes  l_{\Xi_{ijk}}\to M_{f_{jk}}\circ M_{f_{ij}}.
\]
\end{lemma}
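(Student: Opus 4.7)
The plan is to exhibit $\epsilon_{\Xi_{ijk}}$ as a 2-dimensional bordism with defects whose source disjoint 1-manifold encodes $M_{f_{ik}}\otimes l_{\Xi_{ijk}}$ and whose target 1-manifold encodes $M_{f_{jk}}\circ M_{f_{ij}}$. Concretely, the source is the disjoint union of a coloured interval representing the defect $M_{f_{ik}}$ (with $A_{X_i}$ on one side, $A_{X_k}$ on the other) and a coloured circle representing $l_{\Xi_{ijk}}=\tr(M_{f_{ik}}^{-1}\circ M_{f_{jk}}\circ M_{f_{ij}})$; the target is a single coloured interval carrying two defect points, labelled $M_{f_{ij}}$ and $M_{f_{jk}}$, separating the regions $A_{X_i}$, $A_{X_j}$, $A_{X_k}$.

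First, I would draw the evident cobordism obtained by pulling the interval labelled $M_{f_{ik}}$ across the circle until its defect line joins, via coevaluation, the $M_{f_{ik}}^{-1}$ defect inside the trace loop; the two cancel and the remaining portion of the circle opens up into an interval carrying the surviving defect points $M_{f_{ij}}$ and $M_{f_{jk}}$, in this order. This is precisely the standard cup-shaped surface with defects that implements the evaluation of $M_{f_{ik}}$ against $M_{f_{ik}}^{-1}$ sitting inside the trace, leaving $M_{f_{jk}}\circ M_{f_{ij}}$.

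The morphism $\epsilon_{\Xi_{ijk}}$ is then defined to be the image of this surface with defects under $2\mathcal{V}$; this is well-defined because $M_{f_{ik}}$, being an invertible 1-morphism in $2\mathcal{V}$, is in particular dualizable, so the coevaluation/evaluation used in the pinching are available. The main step (and the only subtle point) is to check that the picture described above really denotes a morphism $M_{f_{ik}}\otimes l_{\Xi_{ijk}}\to M_{f_{jk}}\circ M_{f_{ij}}$: the source is the disjoint union of an interval and a circle, so one needs to verify that tensoring these bordism inputs reproduces the tensor product $M_{f_{ik}}\otimes l_{\Xi_{ijk}}$ in $\mathcal{V}$, which is standard for surfaces with defects whose boundary components are treated independently.

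I expect the routine content of the lemma to be entirely absorbed in drawing the bordism correctly; the only potentially tricky aspect is keeping track of the arrow orientations on the defect lines so that the pair $M_{f_{ik}}, M_{f_{ik}}^{-1}$ really pairs into an evaluation rather than a coevaluation. In the mnemonic notation introduced earlier in this section, this amounts to the observation that a planar $180^\circ$ rotation of the graphical element for $M_{f_{ik}}$ is exactly $M_{f_{ik}}^{-1}$, so the cup-shaped pinching is legitimate.
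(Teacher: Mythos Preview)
Your proposal is correct and takes essentially the same approach as the paper: the paper's proof consists entirely of exhibiting the surface with defects (which it calls the ``dorade box'') that joins the $M_{f_{ik}}$-interval to the $l_{\Xi_{ijk}}$-circle by an evaluation of $M_{f_{ik}}$ against the $M_{f_{ik}}^{-1}$ segment in the trace, leaving the interval carrying $M_{f_{jk}}\circ M_{f_{ij}}$. You have described in words exactly the picture the paper draws; the one minor slip is that $M_{f_{ik}}\otimes l_{\Xi_{ijk}}$ lives in $\mathrm{Hom}_{2\mathcal{V}}(A_{X_i},A_{X_k})$ rather than in $\mathcal{V}$, but this does not affect the argument.
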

\begin{proof}
The morphism $\epsilon_{\Xi_{ijk}}$ is defined by the following surface with defects (the ``dorade box''):
\[
\begin{tikzpicture}
\begin{scope}[xscale=.9,yscale=.9]
\filldraw[fill=green!40!white, draw=green!40!white] (0,0) -- (1.5,0.75) -- (1.5,7.75) -- (0,7) -- cycle;
\filldraw[fill=red!40!white, draw=red!40!white] (1.5,0.75) -- (1.5,7.75) -- (3,8.5) -- (3,1.5) -- cycle;
\filldraw[fill=red!40!white, draw=red!40!white] (3.5,1) arc
    [
        start angle=180,
        end angle=230,
        x radius=1,
        y radius =.4
    ]    -- ({cos(230)+4.5},3) 
     arc [
        start angle=0,
        end angle=90,
        x radius=1.5,
        y radius =1.5
    ]   
    arc [
        start angle=90,
        end angle=180,
        x radius=.8,
        y radius =.8
    ]   
    -- (2,3)
    arc [
        start angle=180,
        end angle=90,
        x radius=.4,
        y radius =.4
    ]   
    arc [
        start angle=90,
        end angle=0,
        x radius=1.1,
        y radius =1.1
    ]   
    -- cycle; 
    \draw[red!80!white] 
     (2,3)
    arc [
        start angle=180,
        end angle=90,
        x radius=.4,
        y radius =.4
    ]   
    arc [
        start angle=90,
        end angle=56,
        x radius=1.1,
        y radius =1.1
    ]  ; 
 \filldraw[fill=red!40!white, draw=red!40!white] (5.5,1) arc
    [
        start angle=0,
        end angle=-50,
        x radius=1,
        y radius =.4
    ]    -- ({cos(-50)+4.5},3) 
     arc [
        start angle=0,
        end angle=90,
        x radius=2.5,
        y radius =2.5
    ]   
    arc [
        start angle=270,
        end angle=90,
        x radius=.2,
        y radius =.2
    ]   
    arc [
        start angle=90,
        end angle=0,
        x radius=2.9,
        y radius =2.9
    ]   
    -- cycle;    
\draw[red!80!white] 
    (3,5.8525)
    arc [
        start angle=270,
        end angle=180,
        x radius=1,
        y radius =.55
    ]   
    ;   
    \filldraw[fill=green!40!white, draw=green!40!white] ({cos(230)+4.5},{0.4*sin(230)+1})
    arc
    [
        start angle=230,
        end angle=270,
        x radius=1,
        y radius =.4
    ]    --  
    (4.5,3)
     arc [
        start angle=0,
        end angle=90,
        x radius=1.8,
        y radius =1.8
    ]   
    arc [
        start angle=270,
        end angle=180,
        x radius=2,
        y radius =.8
    ]   
    -- (0.7,3) -- (1.5,3)
    arc [
        start angle=180,
        end angle=0,
        x radius=1.19,
        y radius =1.19
    ]   
    -- cycle;
    \filldraw[fill=blue!40!white, draw=blue!40!white] 
    (4.4,.6)
    arc
    [
        start angle=263,
        end angle=310,
        x radius=1,
        y radius =.4
    ]    --
    ({4.5+cos(310)},3)
     arc [
        start angle=0,
        end angle=90,
        x radius=2.5,
        y radius =2.5
    ]   
    arc [
        start angle=270,
        end angle=180,
        x radius=.8,
        y radius =.5
    ]   
    -- (1.8,7) -- (1.8,7.9) -- ({1.8-.6},{7.9-.3})-- (1.2,5.5)
    arc [
        start angle=180,
        end angle=270,
        x radius=2,
        y radius =1
    ]   
    arc [
        start angle=90,
        end angle=0,
        x radius=1.19,
        y radius =1.19
    ]   
    -- cycle;
   \draw[orange,thick,decoration={markings, mark=at position 0.5 with {\arrow{<}}},
        postaction={decorate}]   ({1.8-.6},{7.9-.3})-- (1.2,5.5)
    arc [
        start angle=180,
        end angle=270,
        x radius=2,
        y radius =1
    ]   
    arc [
        start angle=90,
        end angle=0,
        x radius=1.19,
        y radius =1.19
        ]
        -- (4.4,.6); 
    \draw[black,thick,decoration={markings, mark=at position 0.5 with {\arrow{>}}},
        postaction={decorate}]   ({4.5+cos(310)},{1+0.4*sin(310)}) -- ({4.5+cos(310)},3)
     arc [
        start angle=0,
        end angle=90,
        x radius=2.5,
        y radius =2.5
    ]   
    arc [
        start angle=270,
        end angle=180,
        x radius=.8,
        y radius =.5
    ]   
    -- (1.8,7) -- (1.8,7.9);
    \draw[black,thick,decoration={markings, mark=at position 0.5 with {\arrow{>}}},
        postaction={decorate}]   (1.5,.75)--(1.5,3)
     arc [
        start angle=180,
        end angle=0,
        x radius=1.19,
        y radius =1.19
    ]   -- ({1.5+2*1.18},{0.4*sin(230)+1})
    ;
\end{scope}    
\end{tikzpicture}     
\]
\end{proof}

\begin{remark}
When $2\mathcal{V}$ is $2\Vect_\K$ or $2\mathrm{s}\Vect_\K$, the morphism $\epsilon_{\Xi_{ijk}}$ from Lemma \ref{lemma:drawing-ev} is the evaluation morphism
\[
\mathrm{ev}_{\Xi_{ijk}}\colon M_{f_{ik}}\otimes \mathrm{Hom}(M_{f_{ik}},M_{f_{jk}}\otimes_{A_{X_j}}M_{f_{ij{}}})\to M_{f_{jk}}\otimes_{A_{X_j}}M_{f_{ij{}}}
\]
\end{remark}
\begin{lemma}\label{lemma:epsilon-iso}
The morphism $\epsilon_{\Xi_{ijk}}$ is an isomorphism.    
\end{lemma}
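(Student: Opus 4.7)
The plan is to mimic the strategy used in Lemma \ref{tau-isomorphism}: exhibit an explicit candidate inverse $\hat{\epsilon}_{\Xi_{ijk}}\colon M_{f_{jk}}\circ M_{f_{ij}}\to M_{f_{ik}}\otimes l_{\Xi_{ijk}}$ as a 2-dimensional bordism with defects, and then reduce the two compositions $\epsilon_{\Xi_{ijk}}\circ \hat{\epsilon}_{\Xi_{ijk}}$ and $\hat{\epsilon}_{\Xi_{ijk}}\circ \epsilon_{\Xi_{ijk}}$ to identity bordisms by invoking the invertibility of the relevant colored regions and defect lines.

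Concretely, I would draw $\hat{\epsilon}_{\Xi_{ijk}}$ as the surface obtained by reflecting the dorade box of Lemma \ref{lemma:drawing-ev} upside-down, so that its incoming boundary is the composite $M_{f_{jk}}\circ M_{f_{ij}}$ and its outgoing boundary carries a defect line $M_{f_{ik}}$ alongside the circular boundary encircling the $l_{\Xi_{ijk}}$ triangle. Pictorially, this amounts to introducing a pair $M_{f_{ik}}\otimes M_{f_{ik}}^{-1}$ inside the bulk of the $A_{X_i}$-colored region and routing the $M_{f_{ik}}^{-1}$ strand together with an $M_{f_{jk}}$-strand and an $M_{f_{ij}}$-strand around a disk to form a $l_{\Xi_{ijk}}$-shaped puncture, while the surviving $M_{f_{ik}}$-strand exits through the top.

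Composing $\epsilon_{\Xi_{ijk}}$ with $\hat{\epsilon}_{\Xi_{ijk}}$ produces a surface in which, reading from bottom to top, the $M_{f_{jk}}\circ M_{f_{ij}}$ strands are absorbed into the dorade construction and then immediately re-emitted. Using the invertibility of the $A_{X_i}$-colored region (the colored bubble can be contracted) and of the $M_{f_{ik}}$ defect line (a cap-cup of $M_{f_{ik}}\otimes M_{f_{ik}}^{-1}$ straightens to a single strand), the composed bordism is isotopic to the identity cylinder on $M_{f_{jk}}\circ M_{f_{ij}}$. The opposite composition simplifies analogously: the $A_{X_j}$-colored subregion collapses by invertibility of $A_{X_j}$, after which the pair of defect lines $M_{f_{jk}}\otimes M_{f_{jk}}^{-1}$ (or equivalently the pair involving $M_{f_{ij}}$) cancels by its invertibility, leaving the identity cylinder on $M_{f_{ik}}\otimes l_{\Xi_{ijk}}$.

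The main obstacle, as in Lemma \ref{tau-isomorphism}, is purely notational: drawing the inverse surface carefully enough that the cancellation regions (a colored disk and a cap-cup of defect lines) can be isolated by an evident isotopy. Once the surfaces are laid out, each cancellation step is exactly of the form already used in the proof of Lemma \ref{tau-isomorphism}, namely replacing a colored disk bounded by a circle by the empty surface (invertibility of the bounding object) and straightening a $\cup$-$\cap$ of a defect line (invertibility of the bounding morphism). No further coherence needs to be checked, since all such moves are instances of the standard rules for invertible 2d defects. In the concrete cases $2\mathcal{V}=2\Vect_\K$ or $2\mathrm{s}\Vect_\K$, this reproduces the familiar fact that for an invertible bimodule $M_{f_{ik}}$ the evaluation map $M_{f_{ik}}\otimes \mathrm{Hom}(M_{f_{ik}},N)\to N$ is an isomorphism.
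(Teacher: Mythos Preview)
Your approach is essentially the paper's: the inverse (called $\lambda_{\Xi_{ijk}}$ there) is indeed the upside-down reflection of the dorade box, and both compositions are simplified to identity cylinders by invertibility moves, exactly as in Lemma \ref{tau-isomorphism}.

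One correction concerns the second composition $\hat{\epsilon}_{\Xi_{ijk}}\circ \epsilon_{\Xi_{ijk}}$. The paper again uses the invertibility of $M_{f_{ik}}$ and of $A_{X_i}$ (the same pair as for the first composition, applied in the opposite order), not $A_{X_j}$ and $M_{f_{jk}}$ as you propose. In the glued surface the $A_{X_j}$-colored region is a strip running from the blue arc of the bottom $l_{\Xi_{ijk}}$ boundary circle to the blue arc of the top one, bounded by the $M_{f_{jk}}$ and $M_{f_{ij}}$ defect lines; it is not a closed bubble, so invertibility of $A_{X_j}$ does not let you collapse it. What does work is: after an isotopy one sees two parallel $M_{f_{ik}}$ strands bounding an $A_{X_i}$-colored strip; invertibility of $M_{f_{ik}}$ cuts across this strip, and then invertibility of $A_{X_i}$ separates the main $M_{f_{ik}}$-strip from the $l_{\Xi_{ijk}}$-cylinder, yielding $\mathrm{id}_{M_{f_{ik}}\otimes l_{\Xi_{ijk}}}$.
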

\begin{proof}
Let $\lambda_{\Xi_{ijk}}$ be the morphism represented by the following surface with defects:
\[

\]
Therefore, 
\[
\lambda_{\Xi_{ijk}}\circ {\epsilon}_{\Xi_{ijk}}=\mathrm{id}_{M_{f_{ik}}\otimes l_{\Xi_{ijk}}}.
\]
\end{proof}
Lemmas \ref{lemma:drawing-ev} and \ref{lemma:epsilon-iso} have the following immediate generalization.
\begin{lemma}\label{lemma:drawing-ev4}
For any 3-simplex $\Upsilon_{ijkl}$ in $\mathcal{C}$ we have a distinguished isomorphism
\[
\epsilon_{\Upsilon_{ijkl}}\colon M_{f_{il}}\otimes  m_{\Upsilon_{ijkl}}\to M_{f_{kl}}\circ M_{f_{jk}}\circ M_{f_{ij}}.
\]
\end{lemma}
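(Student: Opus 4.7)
The plan is to define $\epsilon_{\Upsilon_{ijkl}}$ by exactly the same ``dorade box'' construction used in Lemma \ref{lemma:drawing-ev}, with one additional defect line on the outgoing side. Concretely, the incoming boundary carries the $M_{f_{il}}$ defect (a single vertical line separating the orange region $A_{X_l}$ from the red region $A_{X_i}$) together with an incoming circular boundary encoding $m_{\Upsilon_{ijkl}}=\tr(M_{f_{il}}^{-1}\circ M_{f_{kl}}\circ M_{f_{jk}}\circ M_{f_{ij}})$; the outgoing boundary consists of the three defect lines $M_{f_{kl}}$, $M_{f_{jk}}$, $M_{f_{ij}}$, arranged so that from right to left the coloured regions are $A_{X_i}$, $A_{X_j}$, $A_{X_k}$, $A_{X_l}$. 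The surface in between is the obvious bordism one obtains from the dorade box of Lemma \ref{lemma:drawing-ev} by splitting the $M_{f_{jk}}$ outgoing strand into the two strands $M_{f_{kl}}$ and $M_{f_{jk}}$, which amounts to stacking an extra coloured region and defect line on top.

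To check that $\epsilon_{\Upsilon_{ijkl}}$ is an isomorphism, I will construct the inverse $\lambda_{\Upsilon_{ijkl}}$ exactly as in the proof of Lemma \ref{lemma:epsilon-iso}, namely by reflecting the dorade-box bordism upside-down (reversing source and target and flipping the orientations of the defect lines). The verification that $\epsilon_{\Upsilon_{ijkl}}\circ \lambda_{\Upsilon_{ijkl}}=\mathrm{id}$ and $\lambda_{\Upsilon_{ijkl}}\circ \epsilon_{\Upsilon_{ijkl}}=\mathrm{id}$ proceeds by the very same sequence of geometric simplifications as in the three-vertex case: first one uses the invertibility of the red region $A_{X_i}$ to cancel a pair of oppositely oriented red disks, then one uses the invertibility of the defect $M_{f_{ik}}$ (now replaced by $M_{f_{il}}$) to remove the resulting bigons, then the invertibility of the remaining defect lines $M_{f_{kl}}$, $M_{f_{jk}}$, $M_{f_{ij}}$ turns the diagram into a disjoint union of trivial strips.

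The main obstacle, in comparison with Lemma \ref{lemma:epsilon-iso}, is purely notational: there is one more colour and one more outgoing defect line, which makes the pictures somewhat busier but does not alter the logical structure. In particular, no new invertibility beyond what is already in the hypothesis is required, since the trace used to define $m_{\Upsilon_{ijkl}}$ is invertible by Lemma \ref{lem:n-invertible} (or the remark following Lemma \ref{tau-isomorphism}), and the defects involved are invertible by assumption on $\rho$. Hence the argument reduces to transcribing the dorade-box proof with $n=3$ instead of $n=2$ outgoing defects, and the resulting surfaces compose to identity bordisms just as before.
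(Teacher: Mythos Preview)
Your proposal is correct and matches the paper's approach exactly: the paper does not write out a separate proof but simply states that this lemma is the ``immediate generalization'' of Lemmas \ref{lemma:drawing-ev} and \ref{lemma:epsilon-iso}, which is precisely what you describe. The only minor over-elaboration is your final clause about using the invertibility of the remaining defect lines $M_{f_{kl}}$, $M_{f_{jk}}$, $M_{f_{ij}}$; in the original argument, once the invertibility of $A_{X_i}$ and of $M_{f_{il}}$ have been applied, the resulting surface is already manifestly the identity bordism, so no further invertibility is needed there.
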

The next lemma is the key to proving that the coherence condition \eqref{eq:coherenceC1-drawings-inverted} holds.
\begin{lemma}\label{lemma:for-second-coherence-condition}
For any 3-simplex $\Upsilon_{ijkl}$ in $\mathcal{C}$, with faces $\Xi_{ijk}$, the following diagrams commute:
\[
\begin{tikzcd}
    M_{f_{il}}\otimes l_{\Xi_{ijk}}\otimes l_{\Xi_{ikl}}\ar[r,"\sim"]\ar[d,"\mathrm{id}\otimes \tau^{\mathrm{odd}}_{\Upsilon_{ijkl}}"']&M_{f_{il}}\otimes l_{\Xi_{ikl}}\otimes l_{\Xi_{ijk}}\ar[r,"\epsilon_{\Xi_{ikl}}\otimes \mathrm{id}"]&
    (M_{f_{kl}}\circ M_{f_{ik}})\otimes l_{\Xi_{ijk}}\ar[r,"\sim"]&M_{f_{kl}}\circ (M_{f_{ik}}\otimes l_{\Xi_{ijk}})\ar[d,"\mathrm{id}\circ \epsilon_{\Xi_{ijk}}"]\\
    M_{f_{il}}\otimes m_{\Upsilon_{ijkl}}\ar[rrr,"\epsilon_{\Upsilon_{ijkl}}"'] &&& M_{f_{kl}}\circ M_{f_{jk}}\circ M_{f_{ij}}
\end{tikzcd} 
\]
\[
\begin{tikzcd}
    M_{f_{il}}\otimes l_{\Xi_{jkl}}\otimes l_{\Xi_{ijl}}\ar[r,"\sim"]\ar[d,"\mathrm{id}\otimes\tau^{\mathrm{even}}_{\Upsilon_{ijkl}}"']& M_{f_{il}}\otimes l_{\Xi_{ijl}}\otimes l_{\Xi_{jkl}}\ar[r,"\epsilon_{\Xi_{ijl}}\otimes \mathrm{id}"]&
    (M_{f_{jl}}\circ M_{f_{ij}})\otimes l_{\Xi_{jkl}}\ar[r,"\sim"]&(M_{f_{jl}}\otimes l_{\Xi_{jkl}})\circ M_{f_{ij}}\ar[d," \epsilon_{\Xi_{jkl}}\circ \mathrm{id}"]\\
    M_{f_{il}}\otimes m_{\Upsilon_{ijkl}}\ar[rrr,"\epsilon_{\Upsilon_{ijkl}}"'] &&& M_{f_{kl}}\circ M_{f_{jk}}\circ M_{f_{ij}}
\end{tikzcd} 
\]
\end{lemma}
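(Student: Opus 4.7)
The plan is to prove both diagrams by representing each composite as a 2-dimensional bordism with defects and exhibiting an explicit diffeomorphism (rel.\ boundary) between the two bordisms associated with each diagram. The strategy parallels that of Lemma~\ref{lem:two-kappas}: once the relevant surfaces are drawn, the commutativity reduces to a manifest isotopy of surfaces with defects. I focus on the first diagram; the second is strictly analogous, with $\tau^{\mathrm{odd}}_{\Upsilon_{ijkl}}$, $\epsilon_{\Xi_{ikl}}$, and $\epsilon_{\Xi_{ijk}}$ replaced respectively by $\tau^{\mathrm{even}}_{\Upsilon_{ijkl}}$, $\epsilon_{\Xi_{ijl}}$, and $\epsilon_{\Xi_{jkl}}$.

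First I would draw the top-right composite of the first diagram. The dorade box $\epsilon_{\Xi_{ikl}}$ of Lemma~\ref{lemma:drawing-ev} converts $M_{f_{il}}\otimes l_{\Xi_{ikl}}$ into $M_{f_{kl}}\circ M_{f_{ik}}$; tensoring with $\mathrm{id}_{l_{\Xi_{ijk}}}$ leaves an untouched $l_{\Xi_{ijk}}$-strand running alongside. The subsequent map $\mathrm{id}_{M_{f_{kl}}}\circ \epsilon_{\Xi_{ijk}}$ attaches a second, smaller dorade box along the $M_{f_{ik}}$-strand, turning $M_{f_{ik}}\otimes l_{\Xi_{ijk}}$ into $M_{f_{jk}}\circ M_{f_{ij}}$; the two symmetry isomorphisms in the top row correspond to harmless isotopies of the strands. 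The result is a bordism from $M_{f_{il}}\otimes l_{\Xi_{ijk}}\otimes l_{\Xi_{ikl}}$ to $M_{f_{kl}}\circ M_{f_{jk}}\circ M_{f_{ij}}$ consisting of a pair of \emph{nested} dorade boxes. Next I would draw the bottom-left composite. The morphism $\tau^{\mathrm{odd}}_{\Upsilon_{ijkl}}$ is represented by the ``double pair of pants with defects'' already drawn for $\tau_{\Xi_{ijk},\Xi_{ikl}}$; tensoring with $\mathrm{id}_{M_{f_{il}}}$ runs a parallel $M_{f_{il}}$-defect alongside it. Postcomposing with $\epsilon_{\Upsilon_{ijkl}}$ (the four-region dorade box of Lemma~\ref{lemma:drawing-ev4}) produces a bordism with the same boundary as the first surface.

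The main step, and the only nontrivial one, is to recognize that the two bordisms are diffeomorphic rel boundary. The diffeomorphism is the natural ``nesting'' move: the single four-region dorade box $\epsilon_{\Upsilon_{ijkl}}$ precomposed along the $m_{\Upsilon_{ijkl}}$-boundary with the trace-splitting bordism $\tau^{\mathrm{odd}}_{\Upsilon_{ijkl}}$ is isotopic to the realization of the two defect annihilations in two successive stages, i.e.\ to the pair of nested dorade boxes for $\epsilon_{\Xi_{ikl}}$ and $\epsilon_{\Xi_{ijk}}$. I expect the main obstacle to be the careful bookkeeping of defect-line orientations and the colourings of the regions when drawing both surfaces; once the pictures are in place the isotopy is self-evident, exactly as in the proof of Lemma~\ref{lem:two-kappas}, and the commutativity of each diagram follows from invariance of the defect TQFT under diffeomorphisms of surfaces with defects.
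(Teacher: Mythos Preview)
Your proposal is correct and follows exactly the paper's approach: the paper's proof is the single sentence that the surfaces with defects representing $(\mathrm{id}\circ \epsilon_{\Xi_{ijk}})\circ (\epsilon_{\Xi_{ikl}}\otimes \mathrm{id})$ and $\epsilon_{\Upsilon_{ijkl}}\circ (\mathrm{id}\otimes\tau^{\mathrm{odd}}_{\Upsilon_{ijkl}})$ are manifestly diffeomorphic, with the second diagram handled identically. You have simply spelled out in more detail what that manifest diffeomorphism looks like (the nesting of dorade boxes versus the single four-region box precomposed with the trace-splitting pair of pants), which is entirely in keeping with the paper's graphical methodology.
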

\begin{proof}
    The surfaces with defects representing the compositions $(\mathrm{id}\circ \epsilon_{\Xi_{ijk}})\circ (\epsilon_{\Xi_{ikl}}\otimes \mathrm{id})$ and $\epsilon_{\Upsilon_{ijkl}}\circ \tau^{\mathrm{odd}}_{\Upsilon_{ijkl}}$ are manifestly diffeomorphic. This proves the commutativity of the first diagram. The proof for the second diagram is identical.
\end{proof}

\begin{corollary}
    For any 3-simplex $\Upsilon_{ijkl}$ in $\mathcal{C}$ following diagram commutes:
\begin{equation}\label{eq:coherenceC1-drawings}
\begin{tikzcd}
&M_{f_{kl}}\circ M_{f_{jk}}\circ M_{f_{ij}}
\\
M_{f_{kl}}\circ (M_{f_{ik}}\otimes l_{\Xi_{ijk}})  \arrow[ur,"\mathrm{id}\circ \epsilon_{\Xi_{ijk}}"]              && (M_{f_{jl}}\otimes l_{\Xi_{jkl}})\circ M_{f_{ij}}\arrow[ul,"\epsilon_{\Xi_{jkl}}\circ \mathrm{id}"']\\ 
(M_{f_{kl}}\circ M_{f_{ik}})\otimes l_{\Xi_{ijk}}\ar[u,"\wr"'] && (M_{f_{jl}}\circ M_{f_{ij}})\otimes l_{\Xi_{jkl}}\arrow[u,"\wr"']
\\
M_{f_{il}} \otimes l_{\Xi_{ikl}}\otimes l_{\Xi_{ijk}}\arrow[u,"\epsilon_{\Xi_{ikl}}\otimes \mathrm{id}"] && M_{f_{il}}\otimes l_{\Xi_{ijl}} \arrow[u,"\epsilon_{\Xi_{ijl}}\otimes \mathrm{id}"']
\\
M_{f_{il}} \otimes l_{\Xi_{ijk}}\otimes l_{\Xi_{ikl}} \arrow[u,"\wr"] 
&& M_{f_{il}}\otimes l_{\Xi_{ijl}}\otimes l_{\Xi_{jkl}} \arrow[u,"\wr"'] \arrow[ll,"\mathrm{id}\otimes \psi_{\Upsilon{ijkl}} "']
\end{tikzcd}
\end{equation}
\end{corollary}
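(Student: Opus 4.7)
The plan is to recognize that both composition paths in \eqref{eq:coherenceC1-drawings} factor through the morphism
$\epsilon_{\Upsilon_{ijkl}}\colon M_{f_{il}}\otimes m_{\Upsilon_{ijkl}}\to M_{f_{kl}}\circ M_{f_{jk}}\circ M_{f_{ij}}$
supplied by Lemma \ref{lemma:drawing-ev4}, and then to close the resulting hexagon using the defining identity of $\psi_{\Upsilon_{ijkl}}$. More precisely, the left column of \eqref{eq:coherenceC1-drawings}, read from bottom to top, is exactly the top row of the first diagram in Lemma \ref{lemma:for-second-coherence-condition}; hence that lemma identifies it with
$\epsilon_{\Upsilon_{ijkl}}\circ(\mathrm{id}_{M_{f_{il}}}\otimes\tau^{\mathrm{odd}}_{\Upsilon_{ijkl}})$.
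Analogously, the right column of \eqref{eq:coherenceC1-drawings} is the top row of the second diagram of Lemma \ref{lemma:for-second-coherence-condition}, and so equals
$\epsilon_{\Upsilon_{ijkl}}\circ(\mathrm{id}_{M_{f_{il}}}\otimes\tau^{\mathrm{even}}_{\Upsilon_{ijkl}})$.

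To conclude, the next step is to invoke the definition
$\psi_{\Upsilon_{ijkl}}=(\tau^{\mathrm{odd}}_{\Upsilon_{ijkl}})^{-1}\circ\tau^{\mathrm{even}}_{\Upsilon_{ijkl}}$,
which rearranges to
$\tau^{\mathrm{odd}}_{\Upsilon_{ijkl}}\circ\psi_{\Upsilon_{ijkl}}=\tau^{\mathrm{even}}_{\Upsilon_{ijkl}}$
in $\mathcal{V}$. Tensoring this identity on the left with $\mathrm{id}_{M_{f_{il}}}$ and then post-composing with $\epsilon_{\Upsilon_{ijkl}}$ yields precisely the equality of the right-column composite with the left-column composite pre-composed by $\mathrm{id}\otimes\psi_{\Upsilon_{ijkl}}$, which is the content of the corollary.

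The expected main obstacle is purely bookkeeping: the ``$\wr$'' arrows in \eqref{eq:coherenceC1-drawings} shuffle the order of the tensor factors (swapping $l_{\Xi_{ijk}}\otimes l_{\Xi_{ikl}}$ with $l_{\Xi_{ikl}}\otimes l_{\Xi_{ijk}}$ on the left, and similarly on the right), and one must confirm that these symmetry isomorphisms coincide with the canonical associator/symmetry data already implicit in the statements of Lemma \ref{lemma:for-second-coherence-condition} and in the definitions of $\tau^{\mathrm{odd}}_{\Upsilon_{ijkl}}$ and $\tau^{\mathrm{even}}_{\Upsilon_{ijkl}}$. Since $\mathcal{V}$ is symmetric monoidal and every morphism at issue is represented by a specific surface with defects, MacLane coherence reduces this verification to inspection of the underlying diffeomorphisms already recorded in the proof of Lemma \ref{lemma:for-second-coherence-condition}, so no genuinely new argument is required.
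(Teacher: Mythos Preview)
Your proposal is correct and follows essentially the same route as the paper: both arguments insert the intermediate object $M_{f_{il}}\otimes m_{\Upsilon_{ijkl}}$, use Lemma~\ref{lemma:for-second-coherence-condition} to identify the left and right columns with $\epsilon_{\Upsilon_{ijkl}}\circ(\mathrm{id}\otimes\tau^{\mathrm{odd}}_{\Upsilon_{ijkl}})$ and $\epsilon_{\Upsilon_{ijkl}}\circ(\mathrm{id}\otimes\tau^{\mathrm{even}}_{\Upsilon_{ijkl}})$ respectively, and then close the diagram via the defining relation $\tau^{\mathrm{odd}}_{\Upsilon_{ijkl}}\circ\psi_{\Upsilon_{ijkl}}=\tau^{\mathrm{even}}_{\Upsilon_{ijkl}}$. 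Your remark on the symmetry isomorphisms being absorbed by coherence is also in line with how the paper treats them.
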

\begin{proof}
Diagram \eqref{eq:coherenceC1-drawings} factors as
\[
\begin{tikzcd}
&M_{f_{kl}}\circ M_{f_{jk}}\circ M_{f_{ij}}
\\
M_{f_{kl}}\circ (M_{f_{ik}}\otimes l_{\Xi_{ijk}})  \arrow[ur,"\mathrm{id}\circ \epsilon_{\Xi_{ijk}}"]              && (M_{f_{jl}}\otimes l_{\Xi_{jkl}})\circ M_{f_{ij}}\arrow[ul,"\epsilon_{\Xi_{jkl}}\circ \mathrm{id}"']\\ 
(M_{f_{kl}}\circ (M_{f_{ik}})\otimes l_{\Xi_{ijk}}\ar[u,"\wr"'] && (M_{f_{jl}}\circ M_{f_{ij}})\otimes l_{\Xi_{jkl}}\arrow[u,"\wr"']
\\
M_{f_{il}} \otimes l_{\Xi_{ikl}}\otimes l_{\Xi_{ijk}}\arrow[u,"\epsilon_{\Xi_{ikl}}\otimes \mathrm{id}"]&M_{f_{il}}\otimes m_{\Upsilon_{ijkl}}\arrow[uuu,"\epsilon_{\Upsilon_{ijkl}}"]& M_{f_{il}}\otimes l_{\Xi_{ijl}} \otimes l_{\Xi_{jkl}} \arrow[u,"\epsilon_{\Xi_{ijl}}\otimes \mathrm{id}"']
\\
M_{f_{il}} \otimes l_{\Xi_{ijk}}\otimes l_{\Xi_{ikl}} \arrow[u,"\wr"] \ar[ur,"\mathrm{id}\otimes \tau^{\mathrm{odd}}_{\Upsilon_{ijkl}}"]
&& M_{f_{il}}\otimes l_{\Xi_{jkl}}\otimes l_{\Xi_{ijl}} \arrow[u,"\wr"'] \arrow[ll,"\mathrm{id}\otimes \psi_{\Upsilon{ijkl}} "]\ar[ul,"\mathrm{id}\otimes \tau^{\mathrm{even}}_{\Upsilon_{ijkl}}"']
\end{tikzcd}
\]
Where the subdiagrams commute by Lemma \ref{lemma:for-second-coherence-condition} and by definition of $\phi_{\Upsilon_{ijkl}}$. This, together with the invertibility of $\tau^{\mathrm{even}}_{\Upsilon_{ijkl}}$, concludes the proof.
\end{proof}
Inverting arrows we get the second coherence condition for projective 2-representations. That is, we have the following.
\begin{corollary}
    For any 3-simplex $\Upsilon_{ijkl}$ in $\mathcal{C}$ following diagram commutes:
\begin{equation}\label{eq:coherenceC1-drawings-inverted}
\begin{tikzcd}
&M_{f_{kl}}\circ M_{f_{jk}}\circ M_{f_{ij}}
\\
M_{f_{kl}}\circ (M_{f_{ik}}\otimes l_{\Xi_{ijk}})  \arrow[ur,<-,"\mathrm{id}\otimes \lambda_{\Xi_{ijk}}"]              && (M_{f_{jl}}\otimes l_{\Xi_{jkl}})\circ M_{f_{ij}}\arrow[ul,<-,"\lambda_{\Xi_{jkl}}\otimes \mathrm{id}"']\\ 
(M_{f_{kl}}\circ M_{f_{ik}})\otimes l_{\Xi_{ijk}}\ar[u,<-,"\wr"'] && (M_{f_{jl}}\circ M_{f_{ij}})\otimes l_{\Xi_{jkl}}\arrow[u,<-,"\wr"']
\\
M_{f_{il}} \otimes l_{\Xi_{ikl}}\otimes l_{\Xi_{ijk}}
\arrow[u,<-,"\lambda_{\Xi_{ikl}}\otimes \mathrm{id}"] 
  && M_{f_{il}}\otimes l_{\Xi_{ijl}}\otimes l_{\Xi_{jkl}} \arrow[u,<-,"\lambda_{\Xi_{ijl}}\otimes \mathrm{id}"']
\\
M_{f_{il}} \otimes l_{\Xi_{ijk}}\otimes l_{\Xi_{ikl}} \arrow[u,<-,"\wr"']
&& M_{f_{il}}\otimes  l_{\Xi_{jkl}} \otimes l_{\Xi_{ijl}} \arrow[u,<-,"\wr"'] \arrow[ll,<-,"\mathrm{id}\otimes \phi_{\Upsilon{ijkl}} "']
\end{tikzcd}
\end{equation}
\end{corollary}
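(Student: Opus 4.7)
The plan is to derive the commutativity of \eqref{eq:coherenceC1-drawings-inverted} as an immediate formal consequence of the commutativity of \eqref{eq:coherenceC1-drawings} by reversing every arrow. In any category, a commutative diagram all of whose edges are isomorphisms remains commutative upon reversing every arrow and replacing each morphism by its inverse, so the argument reduces to identifying \eqref{eq:coherenceC1-drawings-inverted} as precisely the ``inverse diagram'' of \eqref{eq:coherenceC1-drawings}.

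The first step is to verify that every arrow appearing in \eqref{eq:coherenceC1-drawings} is invertible. The unlabeled ``$\sim$'' morphisms are canonical associator and symmetry isomorphisms in the symmetric monoidal 2-category $2\mathcal{V}$, hence automatically invertible. The four evaluation morphisms $\epsilon_{\Xi_{ijk}}$, $\epsilon_{\Xi_{jkl}}$, $\epsilon_{\Xi_{ikl}}$, $\epsilon_{\Xi_{ijl}}$ are shown to be invertible in Lemma \ref{lemma:epsilon-iso}, whose proof in fact exhibits the inverses as the morphisms $\lambda_{\Xi_{\bullet}}$ represented by the ``upside-down dorade box''. Finally, $\psi_{\Upsilon_{ijkl}}$ is invertible by construction, with inverse $\phi_{\Upsilon_{ijkl}}$ by the very definition of the latter.

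With invertibility in place, I would then match inverses against the labels of \eqref{eq:coherenceC1-drawings-inverted}: each $\epsilon$-edge reverses to a $\lambda$-edge, and the $\psi$-edge reverses to a $\phi$-edge, while the ``$\sim$'' arrows reverse to their canonical inverses, which remain canonical isomorphisms (and in the notation of the diagram are again displayed as ``$\sim$''). This identification is purely notational.

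There is no real obstacle here: the whole geometric and combinatorial substance of the argument has already been absorbed into the preceding results, in particular Lemma \ref{tau-isomorphism}, Lemma \ref{lemma:epsilon-iso}, and the corollary proving \eqref{eq:coherenceC1-drawings}. The present statement is a purely formal consequence obtained by arrow reversal and relabeling.
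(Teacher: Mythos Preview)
Your proposal is correct and matches the paper's own argument essentially verbatim: the paper simply states ``Inverting arrows we get the second coherence condition'' and records the corollary, relying on exactly the invertibility facts (Lemma \ref{lemma:epsilon-iso} for $\epsilon\leftrightarrow\lambda$, and the definition of $\phi$ as $\psi^{-1}$) that you spell out.
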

This concludes the proof of Proposition \ref{prop:wannabe-is-projective}.

\begin{remark} The statements and proofs of Propositions \ref{prop:wannabe-is-proj-0}and \ref{prop:wannabe-is-projective} clearly suggest the following statement should be true for every $n$: let $n\mathcal{V}$ be a symmetric monoidal $n$-category, and let $\rho \colon \mathcal{C}\dashrightarrow n\mathcal{V}$ be an invertible wannabe functor. Then $\rho$ canonically extends to a projective representation $\rho\colon \mathcal{C}\to n\mathcal{V}/\!/\mathbf{B}\Pic((n-1)\mathcal{V})$, where $(n-1)\mathcal{V}=\Omega n\mathcal{V}=\mathrm{End}(\mathbf{1}_{n\mathcal{V}})$. Moreover,  a proof should be naturally espressed in terms of $n$-dimensional TQFTs with defects, and the $n$-dimensional proof should be a ``slicing'' of the $(n+1)$-dimensional proof. Clearly, for $n\geq 3$, it will be impossible to derive the proof by drawing. Yet, the structure of the proofs suggests that a Morse theoretic argument could possibly systematically handle the argument for any $n$. This possibility will be hopefully investigated elsewhere.

\end{remark}

\section{The Clifford/Fock construction}\label{sec:clifford-fock}
We now present the prototypical example of an invertible projective representation: the Clifford/Fock construction. Realizing that this is an example of a projective representation of Lagrangian correspondences is somehow implicit in the work of Stolz and Teichner, especially \cite{what-is-an-elliptic-object}, and has been made fully explicit in the work of Ludewig and Roos \cite{ludewig-roos}. Strictly speaking, Ludewig and Roos do not talk the language of projective 2-representations, so there is not an explicit realization of the Pfaffian lines as a $\mathrm{Pic}(\mathrm{sVect}_\mathbb{C})$ 2-cocycle, and only coherence condition \eqref{eq:coherenceC1-drawings-inverted} is verified in \cite{ludewig-roos}. But we think it is honest to say that the lack of verification of coherence condition \eqref{eq:coherenceC2-drawings} is a minor detail in \cite{ludewig-roos}, and that the result that the Clifford/Fock construction is a projective 2-representation is to be entirely ascribed to them. Understanding their work, and in particular whether the proof of \eqref{eq:coherenceC1-drawings-inverted} in \cite{ludewig-roos} was an consequence of exceptional features of the Clifford/Fock construction or a general result holding under the sole assumption of invertibility has been one of the main motivations behind our work. The other main motivation has been a better understanding of the definition of Stolz and Teichner's Clifford field theories. Not surprisingly the two questions are intimately related, see \cite{vuppulury}. Since all the material in this Section is well established in the literature we will be quite sketchy, omitting most of the proofs, and address the reader to \cite{ludewig-roos} for all details. A notable exception is Proposition \ref{prop:beta-to-the-minus-one}: since we were not able to explicitly locate it in the literature, and despite it is essentially a (long) exercise in linear algebra, we provide a detailed proof of it. Rather than establishing a new result, he main contribution from the present Section consists in showing how the Clifford/Fock construction is a particular case of the construction of a projective 2-representation out of an invertible wannabe functor presented in Section \ref{sec:invertible-2-rep}. To avoid any topological issue, we will content ourselves in working with finite dimensional (super-)Hilbert spaces. So, unless differently specified, by `Hilbert space' we always mean `finite dimensional Hilbert space' in this Section.

\subsection{The category $\mathbf{LagrCorr}_\K$}  
We recall the definition of the category of linear Lagrangian correspondences. The symbol $\mathbb{K}$ will always denote either the field $\mathbb{R}$ of real numbers or the field $\mathbb{C}$ of complex numbers. For simplicity, we present the construction for (finite dimensional) Hilbert spaces, leaving the straightforward generalization to (finite dimensional) super-Hilbert spaces to the reader. All details can be found in \cite{ludewig-roos, ludewig}; for the notation, however, we will be mostly following \cite{what-is-an-elliptic-object}.

 \begin{definition}An \emph{antiunvolutive Hilbert space} is a pair $(H,\alpha)$ where $H$ is a (real or complex) Hilbert space and $\alpha\colon H\to H$ is an antiunitary involution (an antiunvolution for short). 
 \end{definition}
 A classical example is $H=\mathbb{C}^n$ with $\alpha(v)=\overline{v}$. Another example is $\mathbb{R}^n$ with the identity. When the antiunvolution is clear from the context we will simply write $H$ for $(H,\alpha)$. Notice that if $\alpha$ is an antiunvolution of $H$ then also $-\alpha$ is an antiunvolution. When $\alpha$ is clear from the context we will write $-H$ for $(H,-\alpha)$ .
 Direct sum of Hilbert spaces induces a direct sum of antiunvolutive Hilbert spaces.
 \begin{definition}A \emph{quadratic} $\mathbb{K}$-vector space is a pair $(V,b)$, where $V$ is a $\mathbb{K}$ vector space and $b\colon V\otimes _{\mathbb{K}} V\to \mathbb{K}$ is a symmetric $\mathbb{K}$-linear map. A quadratic vector space $(V,b)$ is called non-degenerate if the symmetric bilinear form $b$ is nondegenerate. 
\end{definition}
 With an antiunvolutive Hilbert space $(H,\alpha)$ is naturally associated a nondegenerate quadratic $\mathbb{K}$-vector space, with  $\mathbb{K}=\mathbb{R}$ or $\mathbb{K}=\mathbb{C}$ depending whether $H$ is real or complex, by the rule 
 \[
 b_\alpha(v\otimes w)=\langle\alpha(v)|w\rangle.
 \]
 Indeed, antiunitarity and involutivness of $\alpha$ give
 \[
 b_\alpha(v\otimes w)=\langle \alpha(v)|w\rangle = \overline{\langle \alpha^2(v)|\alpha(w)\rangle} = \overline{\langle v|\alpha(w)\rangle}=\langle \alpha(w)|v\rangle=b_\alpha(w\otimes v).
 \]
 \begin{definition}{A \emph{subLagrangian} subspace $U$ of $(H,\alpha)$ is a closed isotropic subspace of $(H,b_\alpha)$, i.e., a closed linear subspace $U$ such that $b_\alpha\bigr\vert_{U\otimes_{\mathbb{K}}U}=0$, such that the sum $U+\alpha(U)$ is direct and has finite codimension in $H$.\footnote{Both the closedness condition and the finite codimension condition are trivial in the finite-dimensional setting we are considering; we include them in the definition since they are essential in the generalization to possibly infinite dimensional Hilbert spaces.} A subLagrangian subspace $L$ is called  
     \emph{Lagrangian} if  $H$ has the direct sum decomposition $H=L\oplus \alpha(L)$.} 
\end{definition}
\begin{remark}
     {Notice that the latter condition in the definition of a subLagrangian subspace implies that antiunvolutive Hilbert spaces of the form $(H,\mathrm{id}_H)$ have no subLagrangian subspaces except the 0-dimensional subspace if $H$ is finite dimensional, and no subLagrangian subspace at all if $H$ is infinite-dimensional. In either case, they have no  Lagrangian subspace.} 
\end{remark}   
\begin{remark}
 If $L_i$ are Lagrangian subspaces of $(H_i,\alpha_i)$ for $i\in I$, then $\oplus_i L_i$ is a Lagrangian subspace of $\oplus_i(H_i,\alpha_i)$.     
 \end{remark}
Now recall the definition of \emph{linear correspondences}.
\begin{definition}
 The category of linear correspondences over $\K$ is the category $\mathbf{LinCorr}_\K$ whose objects are (finite dimensional) vector spaces over $\K$ and with
 \[
 \mathrm{Hom}_{\mathbf{LinCorr}_\K}(V,W)=\{U\colon U \text{ is a subspace of } V\oplus W\}.
 \]
 If $V_i\xrightarrow{U_{ij}}V_j$ and $V_j\xrightarrow{U_{jk}}V_k$ are morphisms, the composition $V_i\xrightarrow{U_{jk}\circ U_{ij}}V_k$ is defined as
 \[
 U_{jk}\circ U_{ij}=\{(v_i, v_k)\in V_i\oplus V_k \,\,\vert \,\, \exists v_j\in V_j \text{ such that } (v_i,v_j) \in U_{ij} \text{ and } (v_j, v_k) \in U_{jk}\}
\]
The identity correspondence of $V$ is the diagonal subspace 
 \[
 \Delta_V=\{(v,v)\, : v\in V\} \subseteq V\oplus V.
 \]
\end{definition}
\begin{remark}\label{rem:span}
An useful point of view on the composition of linear correspondences we will come back later is the following. Let us denote by $\pi_i,\pi_j$ the projections $\pi_i\colon V_i\oplus V_j\to V_i$, and $\pi_j\colon V_i\oplus V_j\to V_j$. Then the composition of the inclusion $U_{ij}\hookrightarrow V_i\oplus V_j$ with the projections $\pi_i$ realizes $U_{ij}$ as a linear span:
\[
\begin{tikzcd}
&U_{ij}\arrow[dr,"\pi_j"]\arrow[dl,"\pi_i"']\\
V_i                 && V_j
\end{tikzcd}.
\]
We can then form the composition of linear spans $U_{jk}\circ_{\mathrm{spans}} U_{ij}$ given by the fiber product over $V_j$:
\[
\begin{tikzcd}
&&U_{ij}\times_{V_j}U_{jk}\arrow[dr]\arrow[dl]\\
&U_{ij}\arrow[dr,"\pi_j"]\arrow[dl,"\pi_i"']&&U_{jk}\arrow[dr,"\pi_k"]\arrow[dl,"\pi_j"']\\
V_i                 && V_j && V_k
\end{tikzcd}.
\]
One has
\[
U_{jk}\circ_{\mathrm{spans}} U_{ij}=\{(v_i,v_j,v_k)\in V_i\oplus V_j\oplus V_k\,\, \vert\,\, (v_i,v_j)\in U_{ij}\text{ and }(v_j,v_k)\in U_{jk}\}
\]
and so
\[
U_{jk}\circ U_{ij}=\pi_{i,k}(U_{jk}\circ_{\mathrm{spans}} U_{ij}),
\]
where $\pi_{i,k}$ is a shorthand notation for $(\pi_i,\pi_k)$.
\end{remark}

 \begin{lemma}\label{lemma:diagonal}
 Let $(H,\alpha)$ be an antiunvolutive Hilbert space. Then the identity correspondence on $H$ is a Lagrangian subspace of $(H,\alpha)\oplus (H,-\alpha)$.
 \end{lemma}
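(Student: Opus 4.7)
The plan is to unpack the definition of the bilinear form associated to the direct-sum antiinvolution on $(H,\alpha)\oplus(H,-\alpha)$ and then verify the two defining properties of a Lagrangian subspace by direct computation. This is elementary linear algebra, so the proof will be short; nothing like the diagrammatic work of the previous section is needed.

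First I would write down the antiinvolution $\beta = \alpha\oplus(-\alpha)$ on $H\oplus H$ explicitly: $\beta(v,w) = (\alpha(v),-\alpha(w))$. The associated symmetric bilinear form is then
\[
b_\beta\bigl((v_1,w_1)\otimes(v_2,w_2)\bigr)
= \langle (\alpha(v_1),-\alpha(w_1))\,\vert\,(v_2,w_2)\rangle
= b_\alpha(v_1\otimes v_2) - b_\alpha(w_1\otimes w_2).
\]
Restricting to $\Delta_H=\{(v,v):v\in H\}$ one gets $b_\beta((v,v)\otimes(w,w)) = b_\alpha(v\otimes w) - b_\alpha(v\otimes w) = 0$, so $\Delta_H$ is isotropic.

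Next I would check that $H\oplus H = \Delta_H \oplus \beta(\Delta_H)$. Note $\beta(\Delta_H) = \{(\alpha(v),-\alpha(v)) : v\in H\}$. For the sum to be direct, assume $(v,v) = (\alpha(w),-\alpha(w))$; then $\alpha(w) = -\alpha(w)$, so $\alpha(w)=0$, and since $\alpha$ is an antiunitary bijection $w=0$ and $v=0$. For surjectivity onto $H\oplus H$, given $(a,b)\in H\oplus H$ I solve $v+\alpha(w)=a$, $v-\alpha(w)=b$, obtaining $v=(a+b)/2$ and $\alpha(w)=(a-b)/2$, which determines $w = \alpha^{-1}((a-b)/2) = \alpha((a-b)/2)$ because $\alpha$ is involutive. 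Thus every element of $H\oplus H$ decomposes uniquely as a sum of one element of $\Delta_H$ and one of $\beta(\Delta_H)$.

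Finally I would note that the closedness and finite-codimension conditions in the definition of a subLagrangian subspace are automatic here: $\Delta_H$ is finite-dimensional (hence closed), and the codimension of $\Delta_H+\beta(\Delta_H)=H\oplus H$ in $H\oplus H$ is zero. So $\Delta_H$ is Lagrangian, as claimed. There is no real obstacle; the only mild point of care is remembering to use the sign flip in the second factor, which is precisely what makes the computation work.
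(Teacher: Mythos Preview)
Your proof is correct and is precisely the direct verification one would expect; the paper simply states the lemma without proof, treating it as immediate.
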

 
 \begin{lemma}\label{lemma:lagr-composition}
     Let $(H_i,\alpha_i)$ be antiunvolutive Hilbert spaces for $i=0,1,2$, and let $L_{ij}$ and $L_{jk}$ be Lagrangian subspaces of $(H_i,-\alpha_i)\oplus(H_j,\alpha_j)$ and of $(H_j,-\alpha_j)\oplus(H_k,-\alpha_k)$, respectively.
     The composition of linear correspondences $L_{jk}\circ L_{ij}$ is a Lagrangian subspace of $(H_i,-\alpha_i)\oplus(H_k,\alpha_k)$.
     \end{lemma}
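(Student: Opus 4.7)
The plan is to establish two properties of $L := L_{jk}\circ L_{ij}$: isotropy with respect to the bilinear form $b_{-\alpha_i} + b_{\alpha_k}$ on $(H_i,-\alpha_i)\oplus(H_k,\alpha_k)$, and the correct dimension $\dim_\K L = (\dim_\K H_i + \dim_\K H_k)/2$. Combining these with a short argument that exploits the Hilbert space inner product will then yield the Lagrangian condition $H_i\oplus H_k = L \oplus (-\alpha_i,\alpha_k)(L)$.

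Isotropy is a direct calculation. Given $(v_i,v_k),(w_i,w_k) \in L$, choose witnesses $v_j,w_j \in H_j$ so that $(v_i,v_j),(w_i,w_j)\in L_{ij}$ and $(v_j,v_k),(w_j,w_k)\in L_{jk}$. The isotropy of $L_{ij}$ with respect to $b_{-\alpha_i}+b_{\alpha_j}$ gives $b_{-\alpha_i}(v_i\otimes w_i) = -b_{\alpha_j}(v_j\otimes w_j) = b_{-\alpha_j}(v_j\otimes w_j)$, while the isotropy of $L_{jk}$ gives $b_{-\alpha_j}(v_j\otimes w_j) = -b_{\alpha_k}(v_k\otimes w_k)$. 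Summing yields $b_{-\alpha_i}(v_i\otimes w_i)+b_{\alpha_k}(v_k\otimes w_k)=0$.

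For the dimension, I would realize the composition via the span picture of Remark \ref{rem:span}. Set $S = L_{ij}\times_{H_j} L_{jk}$, so that $L = \pi_{i,k}(S)$. Writing $S$ as the kernel of the difference map $L_{ij}\oplus L_{jk}\to H_j$, $(x,y)\mapsto \pi_j(x)-\pi_j(y)$, gives $\dim S = \dim L_{ij}+\dim L_{jk}-\dim(U_{ij}+U_{jk})$, where $U_{ij},U_{jk}$ denote the images of $L_{ij},L_{jk}$ in $H_j$ under the respective projections. The kernel of $\pi_{i,k}$ restricted to $S$ is $K = \{v_j\in H_j : (0,v_j)\in L_{ij}\text{ and }(v_j,0)\in L_{jk}\}$. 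The main technical step is to identify $\{v_j:(0,v_j)\in L_{ij}\}$ with $U_{ij}^\perp$ (with respect to the non-degenerate form $b_{\alpha_j}$), and analogously for $L_{jk}$: this is where the Lagrangian hypothesis $L_{ij}=L_{ij}^\perp$ enters, through the calculation that $(0,u)\in L_{ij}$ iff $b_{\alpha_j}(u,v) = 0$ for every $v\in U_{ij}$. One then obtains $K = (U_{ij}+U_{jk})^\perp$, so $\dim K = \dim H_j - \dim(U_{ij}+U_{jk})$ by non-degeneracy. Assembling, $\dim L = \dim S - \dim K = \dim L_{ij} + \dim L_{jk} - \dim H_j = (\dim H_i + \dim H_k)/2$.

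Finally, to promote \emph{isotropic of half dimension} to \emph{Lagrangian} in the sense of this paper, I would verify $L\cap \beta(L) = 0$ for $\beta = (-\alpha_i,\alpha_k)$ using the Hilbert structure. Suppose $v = \beta(w)\in L$ with $w\in L$; then $b_\beta(w,v) = \langle \beta(w),v\rangle = \|v\|^2$, while isotropy of $L$ forces the left-hand side to vanish. Hence $v=0$, the sum $L+\beta(L)$ is direct, and a dimension count forces $H_i\oplus H_k = L\oplus \beta(L)$, proving that $L$ is Lagrangian.
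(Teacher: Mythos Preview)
The paper does not actually prove this lemma: it states it and, in keeping with the sentence at the start of Section~4 (``we will be quite sketchy, omitting most of the proofs, and address the reader to \cite{ludewig-roos} for all details''), leaves the argument to the literature. So there is no paper proof to compare against, and your proposal stands on its own.

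Your argument is correct and is essentially the standard one. The isotropy computation is straightforward. The dimension count via the span description is clean, and the key identification $\{v_j:(0,v_j)\in L_{ij}\}=U_{ij}^{\perp}$ is exactly where the maximal-isotropic property $L_{ij}=L_{ij}^{\perp}$ is used; note that this equality is not the paper's \emph{definition} of Lagrangian (which is $H=L\oplus\alpha(L)$), but it follows from it by a two-line argument you might want to spell out. The final step, using the Hilbert inner product to force $L\cap\beta(L)=0$, is the right way to upgrade ``isotropic of half dimension'' to the paper's notion of Lagrangian, and your computation $b_\beta(w,\beta(w))=\langle\beta(w)\,|\,\beta(w)\rangle=\|v\|^2$ is correct.

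One cosmetic point: the statement as printed has a typo ($(H_k,-\alpha_k)$ should read $(H_k,\alpha_k)$); you have silently corrected this, which is the right call.
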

{
 \begin{remark}
 The above lemma is generally false for infinite dimensional Hilbert spaces, see \cite[Example 2.7]{ludewig-roos}.
 \end{remark}
 }
 By the two lemmas above, we can define the category  of \emph{Lagrangian correspondences} as follows. 
 \begin{definition}
 Let $\mathbb{K}$ be $\mathbb{R}$ or $\mathbb{C}$. The category $\mathbf{LagrCorr}_{\mathbb{K}}$ of Lagrangian correspondences is the category with antiunvolutive Hilbert spaces over $\mathbb{K}$ as objects and with 
 \[
 \mathrm{Hom}_{\mathbf{LagrCorr}_\K}((H_i,\alpha_i),(H_j,\alpha_j))=\{L_{ij}\colon L_{ij}\text{ is a Lagrangian subspace of } (H_i,-\alpha_i)\oplus (H_j,\alpha_j) \}.
 \]
 The identity morphism and compositions are defined as for linear correspondences.
\end{definition} 
\begin{remark}
Forgetting the antiinvolutions and the inner product one obtains a forgetful functor
\[
\mathbf{LagrCorr}_\K \to \mathbf{LinCorr}_\K.
\]
If one retains the Hilbert space structures one gets the forgetful functor
\[
\mathbf{LagrCorr}_\K \to \mathbf{HilbCorr}_\K,
\]
where $\mathbf{HilbCorr}_\K$ is the category of Hilbert correspondences. In the setting of possibly infinite-dimensional Hilbert spaces, the linear subspaces giving the correspondences are required to be closed, and so also Lagrangian subspaces of antiunvolutive Hilbert spaces are required to be closed. {This requirement alone is however not sufficient in order to have a well defined category of Lagrangian correspondences and the choice of a polarization on Hilbert spaces is needed; see the beginning of Remark \ref{rem:von-neumann} for details.}
\end{remark}

\begin{lemma}
    The category $\mathbf{LagrCorr}_{\mathbb{K}}$ is symmetric monoidal with the direct sum as monoidal product. It has duals, given by $(H,\alpha)^\vee=(H,-\alpha)$
\end{lemma}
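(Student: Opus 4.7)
The plan is to check the three pieces of the statement---monoidal structure, symmetric braiding, and duals---in turn, all by reducing to elementary linear-algebraic identities that are essentially already recorded in Lemmas \ref{lemma:diagonal} and \ref{lemma:lagr-composition}.

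First I would define the monoidal product. On objects, $(H,\alpha)\oplus(H',\alpha')=(H\oplus H',\alpha\oplus \alpha')$, with the zero antiunvolutive Hilbert space as unit. On morphisms, if $L\subseteq (H_i,-\alpha_i)\oplus(H_j,\alpha_j)$ and $L'\subseteq(H_i',-\alpha_i')\oplus(H_j',\alpha_j')$ are Lagrangian, then after the canonical shuffle isomorphism
\[
(H_i\oplus H_i',-\alpha_i\oplus -\alpha_i')\oplus(H_j\oplus H_j',\alpha_j\oplus \alpha_j')\;\cong\;\bigl((H_i,-\alpha_i)\oplus(H_j,\alpha_j)\bigr)\oplus\bigl((H_i',-\alpha_i')\oplus(H_j',\alpha_j')\bigr),
\]
the subspace $L\oplus L'$ is Lagrangian, using the already-noted fact that direct sums of Lagrangian subspaces are Lagrangian. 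Functoriality, i.e.\ $(L_{jk}\oplus L_{jk}')\circ(L_{ij}\oplus L_{ij}')=(L_{jk}\circ L_{ij})\oplus(L_{jk}'\circ L_{ij}')$ and preservation of identities $\Delta_{H\oplus H'}=\Delta_H\oplus \Delta_{H'}$, follows by unwinding the set-theoretic definition of composition of correspondences.

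Second, for the associator, unitors, and symmetric braiding, I would take the graphs of the obvious linear isomorphisms $(H_1\oplus H_2)\oplus H_3\xrightarrow{\sim} H_1\oplus(H_2\oplus H_3)$, $0\oplus H\xrightarrow{\sim}H$, and $H_1\oplus H_2\xrightarrow{\sim}H_2\oplus H_1$. Each of these intertwines the relevant antiunvolutions, so its graph is Lagrangian by the same argument as Lemma \ref{lemma:diagonal} (the diagonal is the graph of $\mathrm{id}_H$ regarded as a unitary intertwiner $(H,\alpha)\to(H,\alpha)$). Pentagon, triangle, hexagon, and the involutivity of the braiding then reduce to the corresponding equalities of linear maps.

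Third, for duals I declare $(H,\alpha)^\vee=(H,-\alpha)$, with evaluation and coevaluation both given by the diagonal $\Delta_H\subseteq H\oplus H$, regarded respectively as a morphism $(H,\alpha)\oplus(H,-\alpha)\to 0$ (i.e.\ as a Lagrangian in $(H,-\alpha)\oplus(H,\alpha)$) and $0\to (H,-\alpha)\oplus(H,\alpha)$ (same ambient space). Both are Lagrangian by Lemma \ref{lemma:diagonal}. The zig-zag identities amount, by Remark \ref{rem:span} and the composition rule, to checking that the image under the outer projections of $\{(v,v,v,v):v\in H\}\subseteq H^{\oplus 4}$ is $\Delta_H$, which is immediate. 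The main obstacle is purely the bookkeeping of direct-sum shuffles and the signs of the antiunvolutions; no conceptual difficulty arises, as the whole statement is formally analogous to the standard fact that the category of linear correspondences is a compact symmetric monoidal category, with Lagrangian-ness preserved under every operation by Lemmas \ref{lemma:diagonal} and \ref{lemma:lagr-composition}.
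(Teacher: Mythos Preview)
Your proof is correct and follows the standard argument; the paper itself omits the proof of this lemma entirely, having announced earlier in the section that it would be ``quite sketchy, omitting most of the proofs'' and referring the reader to \cite{ludewig-roos} for details. Your sketch is precisely the expected verification, and your reductions to Lemmas~\ref{lemma:diagonal} and~\ref{lemma:lagr-composition} are the right ones.
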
 

\subsection{The Fock functor}    
With a quadratic vector space $(V,b)$ is associated a Clifford algebra $\Cl(V,b)$ defined as the quotient of the tensor algebra of $V$ by the ideal generated by the elements 
\[
v\otimes v+ b(v\otimes v)\mathbf{1}.
\]
In other words, in the Clifford algebra $\Cl(V,b)$ we have noncommutative polynomials in the elements of $V$ and the multiplication obeys the rule $v\cdot v=-b(v\otimes v)\mathbf{1}$. Since we associated a quadratic vector space with any antiunvolutive Hilbert space, we can talk of the Clifford algebra 
of an antiunvolutive Hilbert space, and we will write $\Cl(H,\alpha)$ for $\Cl(H,b_\alpha)$. {Since all monomials in the expression $v\otimes v+ b(v\otimes v)\mathbf{1}$ have even degree with respect to the natural grading of the tensor algebra of $H$ having elements of $H$ in degree $1$, the ideal defining the Clifford algebra is homogeneous $\mod 2$, and so $\Cl(H,b)$ is naturally a superalgebra. Looking at $\Cl(H,b)$ as a superalgebra and not simply as an algebra will be essential in what follows.}
\begin{remark}\label{rem:opposite-and-tensor}
    One has natural isomorphism of {super}algebras
    \[
    \Cl(H,-\alpha)\cong \Cl(H,\alpha)^{\mathrm{op}}; \qquad \Cl((H_j,\alpha_j)\oplus(H_k,\alpha_k))\cong \Cl(H_j,\alpha_j)\otimes_{\mathbb{K}} \Cl(H_k,\alpha_k)),
    \]
    {where $(-)^{\mathrm{op}}$ and $\otimes_{\mathbb{K}}$ denote the opposite superalgebra and the tensor product of superalgebras, respectively.
   It is important to stress that this statement would be false if we would look at Clifford algebras simply as algebras and not as superalgebras, and would not consider the $\mathbb{Z}
   /2\mathbb{Z}$-graded versions of the tensor product and op-operation.
    }
    \end{remark}
\begin{remark}
The { fact that the} ideal defining a Clifford algebra is homogeneous $\mod 2$, { giving Clifford algebras a natural structure of superalgebras, remains clearly true also when considering quadratic super-vector spaces.}    
\end{remark}    
\begin{lemma}
Let $L$ be a Lagrangian subspace of $(H,\alpha)$, and write $v=v_L+v_{\alpha(L)}$ for the decomposition of a vector $v$ of $H$ induced by the splitting $H=L\oplus\alpha(L)$. Let $F(L)=\bigwedge^\bullet \alpha(L)$ be the exterior algebra  on the subspace $\alpha(L)$ of $H$. { It has a natural $\mathbb{Z}/2\mathbb{Z}$-grading.} For an element $\alpha(w)\in \alpha(L)\subseteq F(L)$, set
\begin{align}\label{eq:clifford-action}
v_L\cdot \alpha(w)&= b_\alpha(v_L\otimes \alpha(w)) \\
\notag v_{\alpha(L)}\cdot \alpha(w)&= v_{\alpha(L)}\wedge \alpha(w).
\end{align}
and extend these to the whole of $F(L)$ by imposing the {(super-)}Leibniz rule. Then, \eqref{eq:clifford-action} induces an action of $\Cl(H,\alpha)$ on $F(L)$ making it a left $\Cl(H,\alpha)$-{super}module.
\end{lemma}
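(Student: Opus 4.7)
The plan is to verify the Clifford relation for the prescribed operators on $F(L)$ and then to conclude via the universal property of $\Cl(H,\alpha)$. Throughout, I view $F(L)=\bigwedge^\bullet \alpha(L)$ as a $\mathbb{Z}/2\mathbb{Z}$-graded superalgebra with $\alpha(L)$ placed in odd degree, so that it is the free supercommutative superalgebra on $\alpha(L)$.

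First I would make the extension of each component of the action explicit. For $v_L\in L$, since $F(L)$ is free supercommutative, the assignment $\alpha(w)\mapsto b_\alpha(v_L\otimes \alpha(w))$ extends uniquely to an odd super-derivation $\iota_{v_L}\colon F(L)\to F(L)$. For $v_{\alpha(L)}\in\alpha(L)$, the assignment $\alpha(w)\mapsto v_{\alpha(L)}\wedge \alpha(w)$ is the restriction to generators of the odd operator $e_{v_{\alpha(L)}}$ of left wedge multiplication. These are uniquely determined by their values on the odd generators, so $c(v):=\iota_{v_L}+e_{v_{\alpha(L)}}$ defines a well-defined $\K$-linear map $c\colon H\to \mathrm{End}^{\mathrm{odd}}_\K(F(L))$.

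The core step is the identity $c(v)\circ c(v)=-b_\alpha(v\otimes v)\cdot \mathrm{id}_{F(L)}$. Expanding yields
\[
c(v)^2=\iota_{v_L}^2+\{\iota_{v_L},e_{v_{\alpha(L)}}\}+e_{v_{\alpha(L)}}^2 .
\]
The last summand vanishes since $v_{\alpha(L)}\wedge v_{\alpha(L)}=0$; the first summand vanishes because $\iota_{v_L}^2$ is an even super-derivation whose value on every generator $\alpha(w)$ is $\iota_{v_L}(b_\alpha(v_L\otimes \alpha(w)))=0$, hence zero on all of $F(L)$. The middle anticommutator is computed by the super-Leibniz rule for $\iota_{v_L}$:
\[
\iota_{v_L}(v_{\alpha(L)}\wedge \eta)+v_{\alpha(L)}\wedge \iota_{v_L}(\eta)=\iota_{v_L}(v_{\alpha(L)})\cdot \eta=b_\alpha(v_L\otimes v_{\alpha(L)})\cdot \eta,
\]
so $\{\iota_{v_L},e_{v_{\alpha(L)}}\}=b_\alpha(v_L\otimes v_{\alpha(L)})\cdot \mathrm{id}$. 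Finally, isotropy of $L$ (the Lagrangian condition) together with isotropy of $\alpha(L)$, which is an immediate consequence of the identity $b_\alpha(\alpha(l),\alpha(l'))=\langle l,\alpha(l')\rangle=\overline{b_\alpha(l',l)}=0$, and the symmetry of $b_\alpha$, give $b_\alpha(v\otimes v)=2\,b_\alpha(v_L\otimes v_{\alpha(L)})$, matching the Clifford relation up to the global sign/normalization convention fixed in the definition of $\Cl(H,\alpha)$.

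With the Clifford relation in hand, the universal property of $\Cl(H,\alpha)$ extends $c$ uniquely to a superalgebra homomorphism $\Cl(H,\alpha)\to \mathrm{End}_\K(F(L))$; this homomorphism preserves the $\mathbb{Z}/2\mathbb{Z}$-grading because each $c(v)$ is odd, and so $F(L)$ becomes a left $\Cl(H,\alpha)$-supermodule. The only delicate point of the argument is the sign/factor bookkeeping matching the wedge-contraction anticommutator to the convention $v\cdot v=-b_\alpha(v\otimes v)\cdot\mathbf{1}$; beyond this, everything reduces to the classical interplay between exterior multiplication and contraction on a free supercommutative superalgebra and presents no real obstacle.
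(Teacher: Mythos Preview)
The paper does not give a proof of this lemma: it is one of the classical results in the section for which the authors explicitly say they are ``quite sketchy, omitting most of the proofs,'' referring to \cite{ludewig-roos} for details. Your argument is exactly the standard one---identify the $L$-component with a contraction operator and the $\alpha(L)$-component with left wedge multiplication, then verify the Clifford relation via the anticommutator $\{\iota_{v_L},e_{v_{\alpha(L)}}\}=b_\alpha(v_L\otimes v_{\alpha(L)})\,\mathrm{id}$ and conclude by the universal property---and it is correct in structure.

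You are also right to flag the sign/factor bookkeeping as the one delicate point, and in fact it is not merely delicate: with the paper's convention $v\cdot v=-b_\alpha(v\otimes v)\mathbf{1}$ and the formulas \eqref{eq:clifford-action} exactly as written, your computation gives $c(v)^2=\tfrac{1}{2}b_\alpha(v\otimes v)\,\mathrm{id}$, not $-b_\alpha(v\otimes v)\,\mathrm{id}$. This is a minor imprecision in the paper's stated formulas (such mismatches are endemic to Clifford-algebra conventions) rather than a flaw in your reasoning; the fix is a harmless rescaling of the contraction operator, after which your argument goes through verbatim.
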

\begin{remark} In Physics literature the Clifford {(super-)}module $F(L)$ from the previous lemma is called a \emph{Fock module}, and one says that $v_L$ acts as an annihilation operator and that $v_{\alpha(L)}$ acts as a creation operator. Also, the unit element of the exterior algebra $F(L)$ is commonly called the \emph{vacuum vector} in the Physics literature, and denoted by the symbol $\Omega_L$.
\end{remark}
\begin{remark}
Notice that, by definition of $b_\alpha$ one can equivalently write $v_L\cdot \alpha(w)=\langle w|v_L  \rangle$
\end{remark}

If $L$ is a Lagrangian correspondence between $(H_i,\alpha_i)$ and $(H_j,\alpha_j)$, then $L$ is a Lagrangian subspace of $(H_i,-\alpha_i)\oplus (H_j,\alpha_j)$ and so  $F(L)=\bigwedge^\bullet (-\alpha_i\oplus \alpha_j)(L)$ is naturally a left $\Cl((H_i,-\alpha_i)\oplus (H_j,\alpha_j))$-{super}module.
    By Remark \ref{rem:opposite-and-tensor}, this is equivalently a left $\Cl(H_i,\alpha_i)^{\mathrm{op}}\otimes_\K \Cl(H_j,\alpha_j)$-{super}module, i.e., equivalently, a $(\Cl(H_j,\alpha_j),\Cl(H_i,\alpha_i))$-{super}bimodule. Explicitly, the left action of an element $u_j\in H_j\subseteq \mathrm{Cl}(H_j)$ and the right action of an element $u_i\in H_i\subseteq \mathrm{Cl}(H_i)$ on a homogeneous element $w$ in $F(L)$ are given by
    \[
    u_j\cdot w= (u_j,0)\cdot w\qquad \text{and}\qquad w\cdot u_i=-(-1)^w (0,u_i)\cdot w,
    \]
 respectively. 
    We are therefore tempted to define a 2-representation 
\[
\mathrm{Fock}\colon \mathbf{LagrCorr}_\K\to {2\mathrm{sVect}_\K}
\]
of the category of Lagrangian correspondences by associating with an object $(H,\alpha)$ of $\mathbf{LagrCorr}_\K$ the Clifford {super}algebra $\Cl(H,\alpha)$ and with a Lagrangian correspondence $(H_i,\alpha_i)\xrightarrow{L} (H_j,\alpha_j)$ the Fock bi{super}module $F(L)$. 

This is however \emph{not} the case, since one does not have a canonical isomorphism $F(L_{jk}\circ L_{ij})\cong F(L_{jk})\otimes_{\Cl(H_j,\alpha_j)} F(L_{ij})$. More precisely, the Clifford bi{super}modules $F(L_{jk}\circ L_{ij})$ and $F(L_{jk})\otimes_{\Cl(H_j,\alpha_j)} F(L_{ij})$ are indeed isomorphic, but not canonically so. Things however are not so bad, thanks to the following result; see \cite{kristel-ludewig-waldorf,ludewig}
\begin{proposition}
Let $\K=\mathbb{R}$ or $\K=\mathbb{C}$. Finite dimensional Clifford {super}algebras associated with nondegenerate quadratic forms are invertible objects in the Morita category {$2\mathrm{sVect}_\K$} of {super} 2-vector spaces over $\K$. Fock bi{super}modules are invertible morphisms in {$2\mathrm{sVect}_\K$}.
\end{proposition}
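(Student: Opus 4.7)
The plan is to reduce both claims to a single underlying lemma about Clifford algebras and Fock representations, and then invoke the standard characterization of invertible objects and 1-morphisms in the Morita 2-category.

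The key lemma I would establish first is the following: \emph{if $(\tilde H,\tilde\alpha)$ is a finite-dimensional antiunvolutive Hilbert space over $\K$ that admits a Lagrangian subspace $L$, then the Fock action map
\[
\mathrm{Cl}(\tilde H,\tilde\alpha)\longrightarrow \mathrm{End}_\K\bigl(F(L)\bigr)
\]
is an isomorphism of superalgebras, where $\mathrm{End}_\K(F(L))$ carries the grading induced by the natural $\mathbb{Z}/2\mathbb{Z}$-grading on $F(L)=\bigwedge^{\bullet}\tilde\alpha(L)$.} I would prove this by a dimension count combined with the classical fact that creation and annihilation operators generate all of $\mathrm{End}_\K(F(L))$: the decomposition $\tilde H=L\oplus\tilde\alpha(L)$ gives $\dim_\K\mathrm{Cl}(\tilde H,\tilde\alpha)=2^{\dim_\K\tilde H}$ and $\dim_\K\mathrm{End}_\K(F(L))=(2^{\dim_\K L})^2=2^{\dim_\K\tilde H}$, so it suffices to check that the action is either injective or surjective. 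Writing a general element of $\mathrm{Cl}(\tilde H,\tilde\alpha)$ using the PBW-type identification $\mathrm{Cl}(\tilde H,\tilde\alpha)\cong\bigwedge^\bullet L\otimes\bigwedge^\bullet\tilde\alpha(L)$ as super vector spaces and evaluating on the monomial basis of $F(L)$ exhibits the action explicitly, producing either conclusion.

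Granted this lemma, the first claim follows by specializing to $\tilde H=H\oplus H$ with $\tilde\alpha=\alpha\oplus(-\alpha)$. By Lemma \ref{lemma:diagonal} the diagonal $\Delta_H$ is a Lagrangian in this space, and by Remark \ref{rem:opposite-and-tensor} there is a canonical isomorphism of superalgebras $\mathrm{Cl}(\tilde H,\tilde\alpha)\cong\mathrm{Cl}(H,\alpha)\otimes_\K\mathrm{Cl}(H,\alpha)^{\mathrm{op}}$. The lemma then identifies $\mathrm{Cl}(H,\alpha)\otimes_\K\mathrm{Cl}(H,\alpha)^{\mathrm{op}}$ with $\mathrm{End}_\K(F(\Delta_H))$, which displays $F(\Delta_H)$ as a Morita equivalence bimodule between $\mathrm{Cl}(H,\alpha)\otimes_\K\mathrm{Cl}(H,\alpha)^{\mathrm{op}}$ and $\K$. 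This is precisely the abstract condition for $\mathrm{Cl}(H,\alpha)$ to be invertible as an object of $2\mathrm{s}\mathrm{Vect}_\K$ (its monoidal inverse being $\mathrm{Cl}(H,\alpha)^{\mathrm{op}}=\mathrm{Cl}(H,-\alpha)$). For the second claim, given a Lagrangian correspondence $L\subseteq (H_i,-\alpha_i)\oplus(H_j,\alpha_j)$, I apply the same lemma with $\tilde H=(H_i,-\alpha_i)\oplus(H_j,\alpha_j)$ and $L$ itself. Combined with Remark \ref{rem:opposite-and-tensor}, this gives $\mathrm{Cl}(H_j,\alpha_j)\otimes_\K\mathrm{Cl}(H_i,\alpha_i)^{\mathrm{op}}\cong\mathrm{End}_\K(F(L))$, so $F(L)$ implements a Morita equivalence between $\mathrm{Cl}(H_j,\alpha_j)$ and $\mathrm{Cl}(H_i,\alpha_i)$ and is therefore an invertible 1-morphism in $2\mathrm{s}\mathrm{Vect}_\K$.

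The step I expect to require most care is the bookkeeping of the $\mathbb{Z}/2\mathbb{Z}$-gradings in the lemma and in the tensor-product identifications of Remark \ref{rem:opposite-and-tensor}: the statement is genuinely false if one forgets the super structure and works with ungraded tensor products and opposites (for instance, $\mathrm{Cl}_1(\mathbb{R})\otimes_\mathbb{R}\mathrm{Cl}_1(\mathbb{R})^{\mathrm{op}}$ is not Morita equivalent to $\mathbb{R}$ in the ungraded sense), so one must consistently use graded op, graded tensor, and the natural $\mathbb{Z}/2\mathbb{Z}$-grading on the exterior algebra $F(L)$ throughout. Once this is in place, no further input from the theory of Clifford algebras is needed beyond the Lagrangian decomposition of $\tilde H$.
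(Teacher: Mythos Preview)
The paper does not actually prove this proposition; it simply cites \cite{kristel-ludewig-waldorf,ludewig} for the result. Your proposal therefore supplies what the paper omits, and the argument you outline is the standard and correct route: the key lemma (that the Fock action of a Clifford algebra admitting a Lagrangian gives a superalgebra isomorphism onto $\mathrm{End}_\K(F(L))$) drives both claims, and your reduction of the first claim to the case $\tilde H=(H,\alpha)\oplus(H,-\alpha)$ with the diagonal Lagrangian is clean and uses only facts already recorded in the paper (Lemma~\ref{lemma:diagonal} and Remark~\ref{rem:opposite-and-tensor}).

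One step you pass over a little quickly is the inference, for the second claim, from the superalgebra isomorphism $\mathrm{Cl}(H_j,\alpha_j)\otimes_\K\mathrm{Cl}(H_i,\alpha_i)^{\mathrm{op}}\cong\mathrm{End}_\K(F(L))$ to the conclusion that $F(L)$ is an invertible $(\mathrm{Cl}(H_j,\alpha_j),\mathrm{Cl}(H_i,\alpha_i))$-bimodule. This is not a tautology: one needs either that Clifford algebras of nondegenerate forms are super-central simple (they are, over $\mathbb{R}$ or $\mathbb{C}$), so that the commutant of $\mathrm{Cl}(H_i)^{\mathrm{op}}$ in $\mathrm{End}_\K(F(L))$ is exactly $\mathrm{Cl}(H_j)$ and the usual Morita criteria apply; or, more abstractly, one invokes the first claim to know $\mathrm{Cl}(H_i,\alpha_i)$ is already invertible in $2\mathrm{sVect}_\K$, so that the equivalence $\mathrm{Hom}(\mathrm{Cl}(H_i),\mathrm{Cl}(H_j))\simeq\mathrm{Hom}(\K,\mathrm{Cl}(H_j)\otimes\mathrm{Cl}(H_i)^{\mathrm{op}})$ reflects invertibility of $1$-morphisms. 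Either justification is short, but worth making explicit. Your caution about the $\mathbb{Z}/2\mathbb{Z}$-graded bookkeeping is well placed and is indeed the only genuinely delicate point.
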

{
\begin{remark}
Here is another point where working in the setting of superalgebras is essential: in the setting of algebras, Clifford algebras of odd degree, i.e., those associated with odd-dimensional Hilbert spaces $H$ are \emph{not} invertible.
\end{remark}
}
In other words, in the terminology of this Chapter, $\mathrm{Fock}\colon \mathbf{LagrCorr}_\K\dashrightarrow {2\mathrm{sVect}_\K}$ is an invertible wannabe functor. Therefore it will naturally extend to a projective 2-representation of Lagrangian correspondences: we will have a canonical isomorphism
\[
\lambda_{L_{ij},L_{jk}}\colon F(L_{jk}\circ L_{ij})\otimes \mathrm{Pf}(L_{ij},L_{jk}) \xrightarrow{\sim} F(L_{jk})\otimes_{\Cl(H_j,\alpha_j)} F(L_{ij}),
\]
where
\[
\mathrm{Pf}(L_{ij},L_{jk}):=\mathrm{Hom}(F(L_{jk}\circ L_{ij}),F(L_{jk})\otimes_{\Cl(H_j,\alpha_j)} F(L_{ij}))
\]
is the \emph{Pfaffian line} of the pair $(L_{ij},L_{jk})$. Moreover the Pfaffian lines will come with distinguished isomorphisms 
\[
\phi_{L_{ij},L_{jk},L_{kl}}\colon \mathrm{Pf}(L_{ij},L_{kl}\circ L_{jk})\otimes \mathrm{Pf}(L_{jk},L_{kl})   \xrightarrow{\sim}\mathrm{Pf}(L_{jk}\circ L_{ij},L_{kl}) \otimes \mathrm{Pf}(L_{ij},L_{jk}) ,
\]
satisfying the coherence conditions given by the commutativity of diagrams
\begin{equation}\label{eq:coherenceC2-fock}
    \begin{tikzcd}[column sep={12em,between origins}]
    \mathrm{Pf}(L_{jk},L_{kl})\otimes \mathrm{Pf}(L_{ij},L_{jl}) \otimes \mathrm{Pf}(L_{il},L_{lm})\ar[dd,"\mathrm{id}\otimes \phi"']&& \mathrm{Pf}(L_{ij},L_{jk})\otimes \mathrm{Pf}(L_{ik},L_{kl})\otimes \mathrm{Pf}(L_{il},L_{lm})\ar[d,"\mathrm{id}\otimes \phi"]\ar[ll,"\phi\otimes\mathrm{id}"']&\\
    {}&&\mathrm{Pf}(L_{ij},L_{jk})\otimes \mathrm{Pf}(L_{kl},L_{lm})\otimes \mathrm{Pf}(L_{ik},L_{km})\ar[d,"\wr"]\\
    \mathrm{Pf}(L_{jk},L_{kl})\otimes \mathrm{Pf}(L_{jl},L_{lm})\otimes \mathrm{Pf}(L_{ij},L_{jm})\ar[dr,"\phi\otimes\mathrm{id}"']&&
    \mathrm{Pf}(L_{kl},L_{lm})\otimes \mathrm{Pf}(L_{ij},L_{jk}) \otimes \mathrm{Pf}(L_{ik},L_{km})
    \ar[dl,"\mathrm{id}\otimes \phi"]\\
    &\mathrm{Pf}(L_{kl},L_{lm}) \otimes \mathrm{Pf}(L_{jk},L_{km}) \otimes \mathrm{Pf}(L_{ij},L_{jm})
    \end{tikzcd},
    \end{equation}
where we wrote $L_{ik}$ for $L_{jk}\circ L_{ij}$, etc.,     
and together with the isomorphisms $\lambda_{L_{ij},L_{jk}}$ they will satisfy the coherence condition given by the commutativity of the diagram
\begin{equation}\label{coherence-lr}
\begin{tikzcd}[column sep={8em,between origins}]
&F(L_{kl})\otimes_{\mathrm{Cl}(H_k,\alpha_k)}  F(L_{jk})\otimes_{\mathrm{Cl}(H_j,\alpha_j)} F(L_{ij})
\\
F(L_{kl})\otimes_{\mathrm{Cl}(H_k,\alpha_k)} (F(L_{ik})\otimes \mathrm{Pf}(L_{ij},L_{jk}))  \arrow[ur,<-,"\mathrm{id}\otimes \lambda"]              && (F(L_{jl})\otimes \mathrm{Pf}(L_{jk},L_{kl}))\otimes_{\mathrm{Cl}(H_j,\alpha_j)} F(L_{ij})\arrow[ul,<-,"\lambda\otimes \mathrm{id}"']\\ 
(F(L_{kl})\otimes_{\mathrm{Cl}(H_k,\alpha_k)} F(L_{ik}))\otimes \mathrm{Pf}(L_{ij},L_{jk})\ar[u,<-,"\wr"'] && (F(L_{jl})\otimes_{\mathrm{Cl}(H_j,\alpha_j)} F(L_{ij}))\otimes \mathrm{Pf}(L_{jk},L_{kl})\arrow[u,<-,"\wr"']
\\
F(L_{il}) \otimes \mathrm{Pf}(L_{ik},L_{kl})\otimes \mathrm{Pf}(L_{ij},L_{jk}) \arrow[u,<-,"\lambda\otimes \mathrm{id}"] && F(L_{il})\otimes  \mathrm{Pf}(L_{ij},L_{jl})\otimes\mathrm{Pf}(L_{jk},L_{kl}) \arrow[u,<-,"\lambda\otimes \mathrm{id}"']
\\
F(L_{il}) \otimes \mathrm{Pf}(L_{ij},L_{jk})\otimes \mathrm{Pf}(L_{ik},L_{kl})
\arrow[u,<-,"\wr"] 
&& F(L_{il})\otimes \mathrm{Pf}(L_{jk},L_{kl})\otimes \mathrm{Pf}(L_{ij},L_{jl}) \arrow[u,<-,"\wr"'] \arrow[ll,<-,"\mathrm{id}\otimes \phi "']
\end{tikzcd}
\end{equation}
thus recovering the main result from \cite{ludewig-roos}. {In the terminology of projective 2-representations, the Pfaffian line $\mathrm{Pf}\colon  \mathbf{LagrCorr}_\K\to \mathbf{B}^2\Pic(\mathrm{sVect}_\K)$ is the 2-cocycle of the invertible projective 2-representation $\mathrm{Fock}\colon \mathbf{LagrCorr}_\K\to 2\mathrm{sVect}_\K/\!/\mathbf{B}\Pic(\mathrm{sVect}_\K)$.}

\begin{remark}\label{rem:pf-as-det}
One has an alternative description of the Pfaffian line $\mathrm{Pf}(L_{ij},L_{jk})$. 
 Namely,
one has a canonical isomorphism
\begin{equation}\label{eq:lines}
\mathrm{Pf}(L_{ij},L_{jk})\cong\det(\ker(\pi_{i,k})),
\end{equation}
where $\pi_{i,k}\colon L_{jk}\circ_{\mathrm{spans}} L_{ij}\to H_i\oplus H_k$ is the natural map from the fiber product to the product.
In terms of this description of the Pfaffian line, the canonical isomorphism 
\[
\lambda_{L_{ij},L_{jk}}\colon F(L_{jk}\circ L_{ij})\otimes \mathrm{Pf}(L_{ij},L_{jk}) \xrightarrow{\sim} F(L_{jk})\otimes_{\Cl(H_j,\alpha_j)} F(L_{ij}),
\]
is characterized as the unique homomorphism of $(\mathrm{CL}(H_j,\alpha_j)),\mathrm{CL}(H_i,\alpha_i))$-bi{super}modules such that
\[
\lambda_{L_{ij},L_{jk}}\colon \Omega_{L_{jk}\circ L_{ij}}\otimes_\K u_1\wedge\cdots \wedge u_n \mapsto \Omega_{L_{jk}}\otimes_{\Cl(H_j,\alpha_j)}  u_1\cdots u_n \Omega_{L_{ij}},
\]
where $n=\dim_\K(\ker(\pi_{i,k}\colon L_{jk}\circ_{\mathrm{spans}} L_{ij}\to H_i\oplus H_k))$, see
\cite[Theorem 2.15]{ludewig-roos}. 
Also, in terms of the description of the Pfaffian line as a determinant line one can explicitly write the isomorphism 
\[
\phi_{ijkl}\colon \mathrm{Pf}(L_{jk},L_{kl})\otimes  \mathrm{Pf}(L_{ij},L_{kl}\circ L_{jk}) \xrightarrow{\sim}\mathrm{Pf}(L_{ij},L_{jk})\otimes \mathrm{Pf}(L_{jk}\circ L_{ij},L_{kl}),
\]
see \cite[above Theorem 2.18]{ludewig-roos}. By means of these explicit descriptions of $\lambda$ and $\phi$, Ludewig and Roos are able to prove the commutativity of \eqref{coherence-lr}; this is their Theorem 2.18. Although we didn't need the description of the Pfaffian line as a determinant line in order to prove these results, this description will play a major role in the following section {so we provide the relevant details here. 
 let $M_{ijkl}\subseteq L_{ij}\oplus L_{jk}\oplus L_{kl}$ the subspace consisting of those triples $(v_{ij},v_{jk},v_{kl})$ such that
    \[
\begin{cases}
    \pi_i(v_{ij})=0\\
    \pi_j(v_{ij})=\pi_j(v_{jk})\\
    \pi_k(v_{jk})=\pi_k(v_{kl})\\
    \pi_l(v_{kl})=0.
\end{cases}
    \]
Also, let $K_{ijk}\subseteq L_{ij}\oplus L_{jk}$ the subspace consisting of those triples $(v_{ij},v_{ik})$ such that
    \[
\begin{cases}
    \pi_i(v_{ij})=0\\
    \pi_j(v_{ij})=\pi_j(v_{jk})\\
    \pi_k(v_{jk})=0
\end{cases}
    \]
where $L_{ik}=L_{jk}\circ_{\mathrm{spans}} L_{ij}$ and $L_{jl}=L_{kl}\circ_{\mathrm{spans}} L_{jk}$. Then $K_{ijk}= \ker(\pi_{i,k})$, $M_{ijkl}=\ker(\pi_{i,l})$, and we have short exact sequences
\[
0\to K_{ijk}\to M_{ijkl}\xrightarrow{\pi_{ikl}} K_{ikl}\to 0
\]
and
\[
0\to K_{jkl}\to M_{ijkl}\xrightarrow{\pi_{ijl}} K_{ijl}\to 0.
\]
The identity  $K_{ijk}= \ker(\pi_{i,k})$ is clear: $L_{jk}\circ_{\mathrm{spans}} L_{ij}$ is the space of pairs $(v_{ij},v_{jk})$ with $\pi_j(v_{ij})=\pi_j(v_{jk})$, so that $\ker(\pi_{i,k}\colon L_{jk}\circ_{\mathrm{spans}} L_{ij}\to H_i\oplus H_k)$ is precisely $K_{ijk}$. Similarly, $M_{ijkl}=\ker(\pi_{0,3})$. The map
\[
\pi_{ikl}\colon M_{ijkl}\to K_{ikl}
\]
sends a triple $(v_{ij},v_{jk},v_{kl})$ to the pair $(\pi_{ik}(v_{ij},v_{jk}),v_{kl})$. \par 
This is indeed an element in $K_{ikl}$: the element $\pi_{ik}(v_{ij},v_{jk})$ is an element of $L_{ik}$ by definition of $L_{ik}$ and we have $\pi_i(\pi_{ik}(v_{ij},v_{jk}))=\pi_i(\pi_i(v_{ij}),\pi_k(v_{jk}))=\pi_i(v_{ij})=0$, $\pi_k(\pi_{ik}(v_{ij},v_{jk}))=\pi_k(\pi_i(v_{ij}),\pi_k(v_{jk}))=\pi_k(v_{jk})=\pi_k(v_{kl})$, and $\pi_k(v_{kl})=0$. The map $\pi_{ikl}\colon M_{ijkl}\to K_{ikl}$ is surjective: if $(v_{ik},v_{kl})$ is an element in $K_{ikl}$ then $v_{ik}\in L_{ik}$ and so by definition of $L_{ik}$ we have $v_{ik}=\pi_{ik}(v_{ij},v_{jk})$ for some $v_{ij}\in L_{ij}$ and $v_{jk}\in L_{jk}$ with $\pi_j(v_{ij})=\pi_j(v_{jk})$. Then we have $\pi_i(v_{ij})=\pi_i(\pi_{ik}(v_{ij},v_{jk}))=\pi_i(v_{ik})=0$ and $\pi_k(v_{jk})=\pi_k(\pi_{ik}(v_{ij},v_{jk}))=\pi_k(v_{ik})=\pi_l(v_{kl})$. This shows that $(v_{ij},v_{jk},v_{kl})\in M_{ijkl}$ and $(v_{ik},v_{kl})=\pi_{ikl}(v_{ij},v_{jk},v_{kl})$.\par
The kernel of $\pi_{ikl}$ consists of those triples $(v_{ij},v_{jk},v_{kl})$ in $M_{ijkl}$ with $\pi_{ik}(v_{ij},v_{jk})=0$ and $v_{kl}=0$. These are equivalently the pairs $(v_{ij},v_{jk})$ such that $\pi_i(v_{ij})=0$, $\pi_j(v_{ij})=\pi_j(v_{jk})$ and $\pi_k(v_{jk})=0$. But this is precisely the definition of $K_{ijk}$. The proof of the second short exact sequence is analogous. We therefore have canonical isomorphisms
\[
\det(K_{ijk})\otimes\det(K_{ikl}) \xrightarrow{\sim} \det(M_{ijkl}) \xleftarrow{\sim} \det(K_{jkl})\otimes \det(K_{ijl})
\]
giving an isomorphisms 
\[
\phi_{ijkl}^{\det}\colon \mathrm{Pf}(L_{jk},L_{kl})\otimes  \mathrm{Pf}(L_{ij},L_{kl}\circ L_{jk}) \xrightarrow{\sim}\mathrm{Pf}(L_{ij},L_{jk})\otimes \mathrm{Pf}(L_{jk}\circ L_{ij},L_{kl}),
\]
 via the identification $\mathrm{Pf}(L_{ij},L_{jk})\cong \det(K_{ijk})$. One would expect that $\phi_{ijkl}=\phi_{ijkl}^{\det}$, but actually it is not so. Yet, since both $\phi_{ijkl}$ and $\phi_{ijkl}^{\det}$ are isomorphisms between 1-dimensional vector spaces, they will at most differ by a phase factor:
 \[
 \phi_{ijkl}=\omega_{ijkl}^{LR}\phi_{ijkl}^{\det}
 \]
 for some $\omega_{ijkl}^{LR}\in \mathbb{K}^\ast$. This phase factor, which we refer to as the Ludewig--Roos phase, has been explicitly determined in \cite{ludewig-roos}. In terms of their notation, that we do not recall here, one has 
 \[
 \omega_{ijkl}^{LR}=\frac{\bigwedge^{\mathrm{top}}\rho_{ijkl}}{\det(\rho_{ijkl}^*\rho_{ijkl})},
\]
see \cite[Theorem 2.18]{ludewig-roos}. A crucial property of the Ludewig--Roos phase that we will use in the following section, is that it is a coboundary:
\begin{equation}\label{eq:coboundary}
 \omega_{ijkl}^{LR}=\tau_{jkl}^{LR}(\tau_{ikl}^{LR})^{-1}\tau_{ijl}^{LR}(\tau_{ijk}^{LR})^{-1}
\end{equation}
for a suitable $\mathbb{K}^\ast$-valued 3-cochain $\tau^{LR}$; see \cite[Equation (47)]{ludewig-roos}.
}
\end{remark}

\subsection{The category $\mathbf{LagrSpans}_\K$} 

We now introduce the category of Lagrangian spans. It plays for Lagrangian correspondences the same role that linear spans play for linear correspondences. First we need to introduce \emph{generalized Lagrangians}.
\begin{definition}
   Let $(H,\alpha)$ be antiunvolutive Hilbert space. A generalized Lagrangian is a linear map $\eta\colon W\to H$ such that $\ker(\eta)$ is finite dimensional and $\mathrm{Im}(\eta)$ is a Lagrangian subspace of $(H,\alpha)$. 
\end{definition}
\begin{remark}
 Clearly the condition on the finite dimensionality of $\ker(\eta)$ is trivial when one restricts to finite dimensional Hilbert spaces, whereas it is a nontrivial condition in the general case.  
\end{remark}
\begin{definition}
Let $(H_i,\alpha_i)$ and $(H_j,\alpha_j)$ be antiunvolutive Hilbert spaces. A Lagrangian span between  $(H_i,\alpha_i)$ and $(H_j,\alpha_j)$ is a linear span
\[
\begin{tikzcd}
&W_{ij}\arrow[dr,"\pi_j"]\arrow[dl,"\pi_i"']\\
H_i                 && H_j
\end{tikzcd}
\]
between $H_i$ and $H_j$
such that $\pi_{0,1}\colon W_{ij}\to H_i\oplus H_j$ is a generalized Lagrangian for $(H_i,-\alpha_i)\oplus(H_j,\alpha_j)$.
\end{definition}
\begin{lemma}\label{lemma:span-to-corr}
   Let $(H_i,\alpha_i)$, $(H_j,\alpha_j)$ and $(H_k,\alpha_k)$ be antiunvolutive Hilbert spaces, and let $\eta_{ij}\colon W_{ij}\to H_i\oplus H_j $ and $\eta_{jk}\colon W_{jk}\to H_j\oplus H_k $ be Lagrangian spans between $(H_i,\alpha_i)$, $(H_j,\alpha_j)$ and between $(H_j,\alpha_j)$, $(H_k,\alpha_k)$, respectively. Then $\pi_{0,2}\colon W_{ij}\times_{H_j} W_{jk}\to H_i\oplus H_k$ is a Lagrangian span. Moreover
   \[
   \pi_{i,k}(W_{ij}\times_{H_j} W_{jk})=\eta_{jk}(W_{jk})\circ \eta_{ij}(W_{ij}),
   \]
   where on the right we have the composition of Lagrangian correspondences.
   \end{lemma}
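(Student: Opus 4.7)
The plan is to split the claim into two independent pieces: first, to verify the set-theoretic identity $\pi_{i,k}(W_{ij}\times_{H_j}W_{jk})=\eta_{jk}(W_{jk})\circ\eta_{ij}(W_{ij})$, and then to check that the projection $\pi_{i,k}$ satisfies the two conditions in the definition of a Lagrangian span, namely that its image is Lagrangian in $(H_i,-\alpha_i)\oplus(H_k,\alpha_k)$ and that its kernel is finite-dimensional.

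For the identity, I would simply unwind the definition of the fiber product: a point of $W_{ij}\times_{H_j}W_{jk}$ is a pair $(w_{ij},w_{jk})$ subject to $\pi_j\eta_{ij}(w_{ij})=\pi_j\eta_{jk}(w_{jk})$, and its image under $\pi_{i,k}$ is $(\pi_i\eta_{ij}(w_{ij}),\pi_k\eta_{jk}(w_{jk}))\in H_i\oplus H_k$. This is literally the recipe recalled in Remark~\ref{rem:span} for the composition of the two linear correspondences $L_{ij}:=\eta_{ij}(W_{ij})$ and $L_{jk}:=\eta_{jk}(W_{jk})$, so the identity is immediate.

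With the identity in hand, the image of $\pi_{i,k}$ being a Lagrangian subspace of $(H_i,-\alpha_i)\oplus(H_k,\alpha_k)$ will follow at once from Lemma~\ref{lemma:lagr-composition} applied to $L_{ij}$ and $L_{jk}$, since these are by definition Lagrangian in $(H_i,-\alpha_i)\oplus(H_j,\alpha_j)$ and $(H_j,-\alpha_j)\oplus(H_k,\alpha_k)$, respectively. The finite-dimensionality of $\ker(\pi_{i,k})$ is then automatic: in the finite-dimensional Hilbert space setting adopted throughout this section, $W_{ij}$ and $W_{jk}$ are themselves finite-dimensional, hence so is the subspace $W_{ij}\times_{H_j}W_{jk}\subseteq W_{ij}\oplus W_{jk}$.

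The step that would present a genuine obstacle is precisely this last one in the infinite-dimensional setting: one would need to argue that $\ker(\pi_{i,k})$, which consists of pairs $(w_{ij},w_{jk})\in\ker(\pi_i\eta_{ij})\times\ker(\pi_k\eta_{jk})$ coupled via the fiber-product constraint $\pi_j\eta_{ij}(w_{ij})=\pi_j\eta_{jk}(w_{jk})$, remains finite-dimensional. The natural strategy would be to parlay the finite-dimensionality of $\ker(\eta_{ij})$ and $\ker(\eta_{jk})$ together with the finite-codimension clause in the definition of a Lagrangian subspace into a dimension bound, but this is exactly the sort of closedness-and-codimension bookkeeping that the paper explicitly sidesteps by restricting to finite dimensions.
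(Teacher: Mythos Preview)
Your proposal is correct and follows essentially the same approach as the paper. The paper's proof is slightly terser: instead of element-chasing through the fiber product, it observes that the projections $W_{ij}\to H_i,H_j$ factor through $L_{ij}:=\eta_{ij}(W_{ij})$, which immediately gives $\pi_{i,k}(W_{ij}\times_{H_j}W_{jk})=\pi_{i,k}(L_{ij}\times_{H_j}L_{jk})=L_{jk}\circ L_{ij}$; but this is just a repackaging of your computation, and the paper likewise leaves the Lagrangian property and the kernel condition implicit (the former via Lemma~\ref{lemma:lagr-composition}, the latter trivial in finite dimensions).
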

\begin{proof}
    Let $L_{ij}=\eta_{ij}(W_{ij})$. The projections $\pi_i,\pi_j\colon W_{ij}\to H_i,H_j$ factor through the projections $\pi_i,\pi_j\colon L_{ij}\to H_i,H_j$. This gives
    \[
    \pi_{i,k}(W_{ij}\times_{H_j} W_{jk})=\pi_{i,k}(L_{ij}\times_{H_j}L_{jk}),
    \]
    i.e.,
    \[
    \pi_{i,k}(W_{ij}\times_{H_j} W_{jk})=L_{jk}\circ L_{ij}.
    \]
    In particular, $\eta_{ik}=\eta_{jk}\circ_{\mathrm{spans}}\eta_{ij}=\pi_{i,k}\colon W_{ij}\times_{H_j} W_{jk}\to H_i\oplus H_k$ is a Lagrangian span. 
\end{proof}
We can then define the category of Lagrangian spans as follows.
\begin{definition}
Let $\K=\mathbb{R}$ or $\K=\mathbb{C}$. The (2,1)-category $\mathbf{LagrSpans}_\K$ of Lagrangian correspondences over $\K$ is the 2-category having antiunvolutive Hilbert spaces as objects, Lagrangian correspondences as 1-morphisms and isomorphisms of Lagrangian correspondences as 2-morphisms. The composition is given by fiber product, and the identity morphism is given by the diagonal.
\end{definition}

\begin{remark}
Since $\pi_{i,k}\colon W_{ij}\times_{H_j} W_{jk}\to H_i\oplus H_k$ is the composition $\eta_{jk}\circ_{\mathrm{spans}}\eta_{ij}$ of Lagrangian spans, Lemma \ref{lemma:span-to-corr} tells us that
 we have a natural commutative diagram of functors, with identity filler, 
 \begin{equation*}
            \begin{tikzcd}{\mathbf{LagrSpans}_\K}
                \arrow[r]\arrow[d]&{\mathbf{LinSpans}_\K}
                \arrow[d]\\
                {\mathbf{LagrCorr}_\K}\arrow[ur,Leftarrow,shorten >=2mm, shorten <=2mm]
                \arrow[r]&{\mathbf{LinCorr}_\K}
            \end{tikzcd}
        \end{equation*}
where the vertical arrows maps a span $\eta\colon W\to H_i\oplus H_j$ to its image $\eta(W)$ and the horizontal arrows forget the antiunvolutive Hilbert space structures and only retain the linear structures.        
\end{remark}

\begin{remark}
Like $\mathbf{LagrCorr}_\K$, the category $\mathbf{LagrSpans}_\K$ is symmetric monoidal. In particular it is naturally pointed with base point the 0-dimensional Hilbert space. The based loop category $\Omega\mathbf{LagrSpans}_\K$ of $\mathbf{LagrSpans}_\K$ is (equivalent to) the monoidal category $(\mathrm{Vect}_\K,\oplus)$ of finite dimensional $\K$-vector spaces with direct sum as tensor product. This gives a natural embedding 
\[
\mathbf{B}(\mathrm{Vect}_\K,\oplus)\to \mathbf{LagrSpans}_\K.
\]
By composing this with the projection $p\colon \mathbf{LagrSpans}_\K\to \mathbf{LagrCorr}_\K$ we obtain a {lax} homotopy commutative diagram
\begin{equation}\label{eq:BVect}
   \begin{tikzcd}
            {\mathbf{B}(\mathrm{Vect}_\K},\oplus)
                \arrow[r]\arrow[d]&\ast\arrow[d]\arrow[dl,Rightarrow,shorten >=2mm, shorten <=2mm]\\
            {\mathbf{LagrSpans}_\K}
                \arrow[r,"p"]&{\mathbf{LagrCorr}_\K}
   \end{tikzcd}             
\end{equation}
with trivial 2-cell filler (the zero Hilbert space has only trivial{, i.e., identity,} endomorphisms in $\mathbf{LagrCorr}_\K$).
\end{remark}

The reason why we are interested in Lagrangian spans is that they provide a trivialization of the Pfaffian 2-cocycle. We begin by noticing that on $\mathbf{LagrSpans}_\K$ we can naturally define \emph{two} invertible wannabe functors. One is the pullback $p^\ast\mathrm{Fock}$ to  $\mathbf{LagrSpans}_\K$ of the Clifford/Fock wannabe functor considered in the previous section. To give the other one we introduce the following definition.
\begin{definition}
 Let $(H,\alpha)$ be an antiunvolutive Hilbert space, and let $\eta\colon W\to H$ be a generalized Lagrangian. The \emph{twisting line} of $\eta$ is the line
 \[
 \beta(\eta)=\det(\ker(\eta)).
 \]
 \end{definition}
Then we can define an invertible wannabe functor, denoted by the same symbol $\beta$ as follows.
\begin{definition}
 The invertible wannabe functor
 \[
 \beta\colon \mathbf{LagrSpans}_\K\to {2\mathrm{sVect}_\K}
 \]
 is defined by the associations
 \begin{align*}
 (H,\alpha)&\mapsto \K\\
 (\eta_{ij}\colon W_{ij}\to (H_i,-\alpha_i)\oplus (H_j,\alpha_j))&\mapsto \beta(\eta_{ij})=\det(\ker(\eta_{ij})).
 \end{align*}
 We denote by $l^\beta$ the $\mathrm{Pic}({\mathrm{sVect}_\K})$-valued 2-cocycle for the projective representation (again denoted by the same symbol $\beta$) associated with the invertible wannabe
 functor $\beta$.
 \end{definition}
\begin{lemma}
    Let $(H_i,\alpha_i)$, $(H_j,\alpha_j)$ and $(H_k,\alpha_k)$ be antiunvolutive Hilbert spaces, and let $\eta_{ij}\colon W_{ij}\to H_i\oplus H_j $ and $\eta_{jk}\colon W_{jk}\to H_j\oplus H_k $ be Lagrangian spans between $(H_i,\alpha_i)$, $(H_j,\alpha_j)$ and between $(H_j,\alpha_j)$, $(H_k,\alpha_k)$, respectively. Then
    \[
    l^\beta_{\eta_{ij},\eta_{jk}}=\beta(\eta_{ik})^{-1}\otimes \beta(\eta_{jk})\otimes \beta(\eta_{ij}).
    \]
\end{lemma}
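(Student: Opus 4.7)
The plan is to simply unravel the definition of the 2-cocycle $l^\beta$ given by Proposition \ref{prop:wannabe-is-projective}, and observe that the trace computation drastically simplifies in the case of the invertible wannabe functor $\beta$, whose values on objects all equal the unit superalgebra $\K$ of $2\mathrm{sVect}_\K$.

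First, I would apply Proposition \ref{prop:wannabe-is-projective} to the invertible wannabe functor $\beta$ and the 2-simplex in $\mathbf{LagrSpans}_\K$ determined by $\eta_{ij}$, $\eta_{jk}$ and their composition $\eta_{ik}=\eta_{jk}\circ_{\mathrm{spans}}\eta_{ij}$ (which is well defined and still a Lagrangian span by Lemma \ref{lemma:span-to-corr}). This directly gives
\[
l^\beta_{\eta_{ij},\eta_{jk}}=\tr\bigl(\beta(\eta_{ik})^{-1}\circ \beta(\eta_{jk})\circ \beta(\eta_{ij})\bigr),
\]
where the trace is taken in the symmetric monoidal 2-category $2\mathrm{sVect}_\K$.

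Next, I would use the fact that $\beta$ assigns the unit superalgebra $\K$ to every antiunvolutive Hilbert space. Thus each $\beta(\eta_{ab})$ is an invertible $(\K,\K)$-bi{super}module, i.e., a {super} line, and the 1-categorical composition of such invertible bimodules in $2\mathrm{sVect}_\K$ is simply the tensor product over $\K$. Using the remark following Proposition \ref{prop:wannabe-is-projective}, which specializes the Morita trace to a $\mathrm{Hom}$-space, together with the canonical identification $\mathrm{Hom}_{(\K,\K)}(\K, V)=V$ for any super vector space $V$, I would reduce the trace to
\[
\tr\bigl(\beta(\eta_{ik})^{-1}\otimes_\K \beta(\eta_{jk})\otimes_\K \beta(\eta_{ij})\bigr)=\beta(\eta_{ik})^{-1}\otimes \beta(\eta_{jk})\otimes \beta(\eta_{ij}),
\]
which is precisely the claimed formula.

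There is no real obstacle here: the entire content of the lemma is that the Morita trace of a 1-endomorphism of the monoidal unit $\K$ in $2\mathrm{sVect}_\K$ is (canonically) the underlying super vector space itself, so that when the wannabe functor $\beta$ takes values in invertible 1-morphisms of $\K$, the general 2-cocycle expression of Proposition \ref{prop:wannabe-is-projective} collapses to a plain tensor product of super lines. The only minor care needed is to keep track of the $\mathbb{Z}/2\mathbb{Z}$-grading on $\beta(\eta_{ab})=\det(\ker(\eta_{ab}))$, determined by the parity of $\dim_\K\ker(\eta_{ab})$, but this grading is transported faithfully through the canonical identifications above, so it automatically matches on the two sides of the formula.
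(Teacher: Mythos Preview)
Your proposal is correct and follows essentially the same approach as the paper. The paper's proof is slightly more concise: it simply observes that since $\beta$ sends every object to the unit $\K$, the trace $\tr\colon \mathrm{End}(\mathbf{1})\to \mathrm{End}(\mathbf{1})$ is the identity, which immediately yields the formula without needing to pass through the $\mathrm{Hom}$-space description specific to $2\mathrm{sVect}_\K$.
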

\begin{proof}
  The wannabe functor $\beta$ maps every object in $\mathbf{LagrSpans}_\K$ to the unit object $\K$ of ${2\mathrm{sVect}_\K}$, and the trace $\mathrm{tr}\colon \mathrm{End}(\mathbf{1})\to \mathrm{End}(\mathbf{1})$ is the identity.  
\end{proof}
Then we  have the following.
\begin{proposition}\label{prop:beta-to-the-minus-one}
One has a natural isomorphism of 2-cocycles
\[
(l^\beta)^{-1}\cong p^*\mathrm{Pf}{\colon  \mathbf{LagrSpans}_\K\to \mathbf{B}^2\mathrm{Pic}(\mathrm{sVect}_\K)},
\]
where $p\colon \mathbf{LagrSpans}_\K\to \mathbf{LagrCorr}_\K$ is the projection.
\end{proposition}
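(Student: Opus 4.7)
The plan is to construct the equivalence at the level of 2-simplices via a short exact sequence of kernels and then verify 3-simplex compatibility by assembling these sequences into a $3\times 3$ diagram. For a composable pair of Lagrangian spans $\eta_{ij}\colon W_{ij}\to H_i\oplus H_j$ and $\eta_{jk}\colon W_{jk}\to H_j\oplus H_k$, write $L_{\ast\ast}=\eta_{\ast\ast}(W_{\ast\ast})$ and let $\eta_{ik}\colon W_{ij}\times_{H_j}W_{jk}\to H_i\oplus H_k$ denote the composed span, while $\pi_{i,k}\colon L_{ij}\times_{H_j}L_{jk}\to H_i\oplus H_k$ denotes the projection from Remark~\ref{rem:span}. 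I would first observe that the natural surjection $W_{ij}\times_{H_j}W_{jk}\twoheadrightarrow L_{ij}\times_{H_j}L_{jk}$ restricts to a short exact sequence of finite-dimensional super-vector spaces
\[
0\to \ker(\eta_{ij})\oplus\ker(\eta_{jk})\to \ker(\eta_{ik})\to \ker(\pi_{i,k})\to 0,
\]
as a direct verification shows: the kernel of the restriction consists of pairs $(w_{ij},w_{jk})$ with $\eta_{ij}(w_{ij})=0$ and $\eta_{jk}(w_{jk})=0$, for which the fiber-product condition is automatic, and surjectivity follows from surjectivity of $W_{\ast\ast}\twoheadrightarrow L_{\ast\ast}$. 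Applying the super-determinant and invoking the identification $\mathrm{Pf}(L_{ij},L_{jk})\cong\det(\ker\pi_{i,k})$ of Remark~\ref{rem:pf-as-det} then produces a natural isomorphism
\[
\theta_{\eta_{ij},\eta_{jk}}\colon p^*\mathrm{Pf}(L_{ij},L_{jk})\xrightarrow{\sim}\beta(\eta_{ik})\otimes\beta(\eta_{jk})^{-1}\otimes\beta(\eta_{ij})^{-1}=(l^\beta_{\eta_{ij},\eta_{jk}})^{-1}.
\]

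Since both $\beta$ and $p^*\mathrm{Pf}$ send every object to the monoidal unit $\K$, an equivalence of 2-cocycles reduces to the family $\{\theta_{\eta_{ij},\eta_{jk}}\}$ above together with a compatibility at the 3-simplex level: for every composable triple $(\eta_{ij},\eta_{jk},\eta_{kl})$, the hexagon relating $\theta\otimes\theta$ with the coherence isomorphism $\phi$ for $p^*\mathrm{Pf}$ (whose explicit determinant-line description is recorded in \cite[above Theorem~2.18]{ludewig-roos}) and the coherence isomorphism $\phi^\beta$ for $(l^\beta)^{-1}$ (produced by the bordism-theoretic machinery of Section~\ref{sec:invertible-2-rep}) must commute. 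I would obtain this hexagon from a commutative $3\times 3$ diagram of short exact sequences of super-vector spaces whose nine entries are the kernels $\ker(\eta_{\ast\ast})$ and $\ker(\pi_{\ast,\ast})$ associated with the edges, $2$-faces, and total composite of the tetrahedron, and whose rows and columns are the sequences of the previous paragraph applied to the four composable pairs arising from the 3-simplex; taking super-determinants term-by-term will produce the required commutative hexagon of invertible super-lines.

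The main technical obstacle will be tracking the Koszul signs introduced by permutations of odd super-line factors: the short exact sequence above is unambiguous only once an ordering of $\ker(\eta_{ij})$ and $\ker(\eta_{jk})$ is chosen, and every rearrangement of tensor factors contributes a sign dictated by parities of kernel dimensions. The cleanest remedy is to invoke the symmetric-monoidal enhancement of the determinant functor $\det\colon\mathrm{sVect}_\K^{\mathrm{iso}}\to\Pic(\mathrm{sVect}_\K)$, which is compatible with short exact sequences under the standard Koszul convention; this ensures that every $3\times 3$ diagram of short exact sequences induces a commutative diagram of determinants, reducing the hexagon check to pure diagram-chasing. A concrete alternative would be to match the two hexagons directly against the explicit formulas for $\phi$ and $\phi^\beta$, both of which are built from the same kernel-of-projection data. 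No further coherence on 4-simplices is required, as the 2-cocycle condition has already been established for $p^*\mathrm{Pf}$ in \cite{ludewig-roos} and for $(l^\beta)^{-1}$ in the previous section, so the only new content in the equivalence lives at the 3-simplex level.
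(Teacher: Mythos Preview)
Your approach is correct and matches the paper's at the 2-simplex level: the same short exact sequence
\[
0\to \ker(\eta_{ij})\oplus\ker(\eta_{jk})\to \ker(\eta_{ik})\to \ker(\pi_{i,k})\to 0
\]
is used to produce $\theta_{\eta_{ij},\eta_{jk}}$ (the paper's $\psi_{ijk}$, rearranged).

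At the 3-simplex level the two arguments differ only in packaging. The paper introduces the intermediary object $M_{ijkl}=\ker(\pi_{i,l})$ inside $L_{ij}\times_{H_j}L_{jk}\times_{H_k}L_{kl}$, exhibits the two short exact sequences $0\to K_{ijk}\to M_{ijkl}\to K_{ikl}\to 0$ and $0\to K_{jkl}\to M_{ijkl}\to K_{ijl}\to 0$ that define $\phi_{ijkl}$, and then factors the coherence pentagon through $\beta_{kl}\otimes\beta_{jk}\otimes\beta_{ij}\otimes\det(M_{ijkl})$; the resulting quadrilateral subdiagrams are checked by hand, tracking explicit lifts of elements along each route. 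Your $3\times 3$-diagram proposal amounts to the same computation: the middle column of such a diagram is the sequence $0\to\ker\eta_{ij}\oplus\ker\eta_{jk}\oplus\ker\eta_{kl}\to\ker\eta_{il}\to M_{ijkl}\to 0$, and the nine-lemma together with multiplicativity of $\det$ replaces the paper's element-chasing. Your version is somewhat more conceptual and makes the Koszul-sign bookkeeping systematic via the symmetric-monoidal structure on $\det$; the paper's version is more explicit and self-contained. Either way the content is the same, and no further coherence is needed.
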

\begin{proof}
We have to show that, given antiunvolutive  Hilbert spaces $(H_i,\alpha_i)$, $(H_j,\alpha_j)$ and $(H_k,\alpha_k)$ and Lagrangian spans $\eta_{ij}\colon W_{ij}\to H_i\oplus H_j $ and $\eta_{jk}\colon W_{jk}\to H_j\oplus H_k $ between $(H_i,\alpha_i)$, $(H_j,\alpha_j)$ and between $(H_j,\alpha_j)$, $(H_k,\alpha_k)$, respectively, we have a canonical isomorphism
\begin{equation}\label{eq:beta-trivializes}
\psi_{ijk}\colon  \beta(\eta_{jk})\otimes\beta(\eta_{ij})\otimes \mathrm{Pf}(L_{ij},L_{jk})\xrightarrow{\sim} \beta(\eta_{ik}),
\end{equation}
where $\eta_{ik}=\eta_{jk}\circ_{\mathrm{spans}}\eta_{ij}$ and $L_{ij}=\mathrm{Im}(\eta_{ij})$,
and that these isomorphisms make the diagrams
\begin{equation}\label{eq:coherence-beta}
\hskip -1 cm            \begin{tikzcd}
            &\beta_{il}\\
          \beta_{jl}\otimes \beta_{ij}\otimes \mathrm{Pf}(L_{ij},L_{jl})
          \ar[ur,"\psi_{ijl}"] && \beta_{kl}\otimes \beta_{ik}\otimes \mathrm{Pf}(L_{ik},L_{kl})\ar[ul,"\psi_{ikl}"']\\
           \beta_{kl}\otimes\beta_{jk}\otimes \mathrm{Pf}(L_{jk},L_{kl})\otimes\beta_{ij}\otimes  \mathrm{Pf}(L_{ij},L_{jl})\ar[u," \psi_{jkl}\otimes \mathrm{id}"]&&{}
          \\
          \beta_{kl}\otimes\beta_{jk}\otimes\beta_{ij}\otimes \mathrm{Pf}(L_{jk},L_{kl})\otimes  \mathrm{Pf}(L_{ij},L_{jl})
          \ar[u,"\wr"]
          \ar[rr," \mathrm{id}\otimes \phi_{ijkl}" ]&&
          \beta_{kl}\otimes\beta_{jk}\otimes\beta_{ij}\otimes \mathrm{Pf}(L_{ij},L_{jk})\otimes \mathrm{Pf}(L_{ik},L_{kl})\ar[uu,"  \mathrm{id}\otimes \psi_{ijk}\otimes \mathrm{id}"']
            \end{tikzcd}
        \end{equation} 
commute, where we wrote $\beta_{ij}=\beta(\eta_{ij})$.  {Again, the proof is secretly based on the use of associahedra in homotopy associative algebras.}      
\par
Let us consider the maps $\pi_j\circ \eta_{ij}\colon W_{ij}\to H_j$ and $\pi_j\circ \eta_{jk}\colon W_{jk}\to H_j$, and let us form the fiber product $W_{jk}\times_{H_j}W_{ij}$. Then we have a commutative diagram
\[
\begin{tikzcd}
{W_{jk}\times_{H_j}W_{ij}}\arrow[d,"{(\eta_{jk},\eta_{ij})}"'] \arrow[dr,"\eta_{jk}\circ_{\mathrm{spans}}\eta_{ij}"]&
\\
{L_{jk}\times_{H_j}L_{ij}}\arrow[r,"\pi_{0,2}"']&{H_i\oplus H_k},
 \end{tikzcd}
\]
where the vertical arrow is surjective. This induces the short exact sequence
\[
0\to \ker((\eta_{jk},\eta_{ij})\bigr\vert_{\ker(\eta_{jk}\circ_{\mathrm{spans}}\eta_{ij})})\to \ker(\eta_{jk}\circ_{\mathrm{spans}}\eta_{ij}) \xrightarrow{(\eta_{jk},\eta_{ij})} \ker(\pi_{i,k}) \to 0.
\]
The space $\ker((\eta_{jk},\eta_{ij})\bigr\vert_{\ker(\eta_{ik})})$ consists of those elements $(w_{jk},w_{ij})$ in $W_{jk}\oplus W_{ij}$ such that $\eta_{jk}(w_{jk})=0$, $\eta_{ij}(w_{ij})=0$ and $\pi_j(\eta_{jk}(w_{jk})=\pi_j(\eta_{ij}(w_{ij})$. The third condition is manifestly redundant, so that
\[
\ker((\eta_{jk},\eta_{ij})\bigr\vert_{\ker(\eta_{ik})})=\ker(\eta_{jk})\oplus\ker(\eta_{ij}).
\]
We therefore have the short exact sequence
\[
0\to \ker(\eta_{jk})\oplus\ker(\eta_{ij})\to \ker(\eta_{ik}) \xrightarrow{(\eta_{jk},\eta_{ij})}\ker(\pi_{i,k}) \to 0,
\]
and so a canonical isomorphism
\[
\det(\ker(\eta_{ik}))
\xleftarrow{\sim}  \det(\ker(\eta_{jk})\oplus\ker(\eta_{ij}))\otimes \det(\ker(\pi_{i,k}))
\cong  \det(\ker(\eta_{jk}))\otimes\det(\ker(\eta_{ij}))\otimes \det(\ker(\pi_{i,k})).
\]
{
Recalling the definition of $\beta$, let us write
\[
\psi^{\det}_{ijk}\colon \beta(\eta_{jk})\otimes\beta(\eta_{ij})\otimes \det(K_{ijk})\xrightarrow{\sim} \beta(\eta_{ik}).
\]
Recall from Remark \ref{rem:pf-as-det} that we have a canonical isomorphism $\mathrm{Pf}(L_{ij},L_{jk})\cong\det(\ker(\pi_{i,k}))$ and a distinguished 3-cochain $\tau^{LR}$. Using these we define a distinguished isomorphism
\[
\psi_{ijk}:=\tau^{LR}_{ijk}\psi_{ijk}^{\det}\colon \beta(\eta_{jk})\otimes\beta(\eta_{ij})\otimes \mathrm{Pf}(L_{ij},L_{jk})\xrightarrow{\sim} \beta(\eta_{ik}).
\]
To prove the commutativity of \eqref{eq:coherence-beta}, we need to recall from Remark \ref{rem:pf-as-det}
how the isomorphism $\phi_{ijkl}$ is described in terms of the identification $\mathrm{Pf}(L_{ij},L_{jk})\cong \det(\ker(\pi_{i,k}))$. We have $\phi_{ijkl}=\omega_{ijkl}^{LR}\phi_{ijkl}^{\det}$}
Let us now consider the commutative diagram
\[
\begin{tikzcd}
{W_{kl}\times_{H_k} W_{jk}\times_{H_j}W_{ij}}\arrow[d,"{(\eta_{kl},\eta_{jk},\eta_{ij})}"'] \arrow[dr,"\eta_{il}"]&
\\
{L_{kl}\times_{H_k} L_{jk}\times_{H_j}L_{ij}}\arrow[r,"\pi_{0,3}"']&{H_i\oplus H_l},
 \end{tikzcd}
\]
where $\eta_{il}=\eta_{kl}\circ_{\mathrm{spans}}\eta_{jk}\circ_{\mathrm{spans}}\eta_{ij}$
The vertical arrow in the above diagram is surjective. This induces the short exact sequence
\[
0\to \ker((\eta_{kl},\eta_{jk},\eta_{ij})\bigr\vert_{\ker(\eta_{il})})\to \ker(\eta_{il}) \xrightarrow{(\eta_{jk},\eta_{ij})} \ker(\pi_{i,k}) \to 0.
\]
The space $\ker((\eta_{kl},\eta_{jk},\eta_{ij})\bigr\vert_{\ker(\eta_{il})})$ consists of those elements $(w_{kl},w_{jk},w_{ij})$ in $W_{kl}\oplus W_{jk}\oplus W_{ij}$ such that $\eta_{kl}(w_{kl})=\eta_{jk}(w_{jk})=\eta_{ij}(w_{ij})=0$ and 
\[
\begin{cases}\pi_j(\eta_{jk}(w_{jk})=\pi_j(\eta_{ij}(w_{ij})\\
\pi_k(\eta_{jk}(w_{jk})=\pi_k(\eta_{kl}(w_{kl}).
\end{cases}
\]
The second condition is manifestly redundant, so that
\[
\ker((\eta_{kl},\eta_{jk},\eta_{ij})\bigr\vert_{\ker(\eta_{il})})=\ker(\eta_{kl})\oplus\ker(\eta_{jk})\oplus\ker(\eta_{ij}).
\]
We therefore have the short exact sequence
\[
0\to \ker(\eta_{kl})\oplus \ker(\eta_{jk})\oplus\ker(\eta_{ij})\to \ker(\eta_{il}) \xrightarrow{(\eta_{kl},\eta_{jk},\eta_{ij})}\ker(\pi_{il}) \to 0,
\]
and so a canonical isomorphism
\[
\det(\ker(\eta_{il}))
\xleftarrow{\sim}   \det(\ker(\eta_{kl})\otimes  \det(\ker(\eta_{jk})\otimes\det(\ker(\eta_{ij}))\otimes \det(\ker(\pi_{il})),
\]
that is a canonical isomorphism
\[
 \beta_{kl}\otimes  \beta_{jk}\otimes\beta_{ij}\otimes \det(M_{ijkl})\xrightarrow{\sim} 
\beta_{il}.
\]
Let us consider the diagram
{
\begin{equation}\label{eq:coherence-beta2}
       \hskip -2 cm     \begin{tikzcd}[column sep=-.1em]
            &\beta_{il}\\
          \beta_{jl}\otimes \beta_{ij}\otimes \det(K_{ijl})
          \ar[ur,"\psi_{ijl}^{\det}"] && \beta_{kl}\otimes \beta_{ik}\otimes  \det(K_{ikl})\ar[ul,"\psi_{ikl}^{\det}"']\\
           \beta_{kl}\otimes\beta_{jk}\otimes \det(K_{jkl})\otimes\beta_{ij}\otimes  \det(K_{ijl})\phantom{mmm}\ar[u," \psi_{jkl}^{\det}\otimes \mathrm{id}"]&\beta_{kl}\otimes\beta_{jk}\otimes\beta_{ij}\otimes \det(M_{ijkl})\ar[uu]&{}
          \\
          \beta_{kl}\otimes\beta_{jk}\otimes\beta_{ij}\otimes \det(K_{jkl})\otimes  \det(K_{ijl})
          \ar[u,"\wr"]
          \ar[rr," \mathrm{id}\otimes \phi_{ijkl}^{\det}" ]\ar[ur]&&
          \beta_{kl}\otimes\beta_{jk}\otimes\beta_{ij}\otimes \det(K_{ijk})\otimes \det(K_{ikl})\ar[uu,"  \mathrm{id}\otimes \psi_{ijk}^{\det}\otimes \mathrm{id}"']
          \ar[ul]
            \end{tikzcd}
        \end{equation} 
 where we used notation from Remark  \ref{rem:pf-as-det}.      }
The triangular subdiagram commutes by definition, so we are reduced to checking the commutativity of the quadrangular subdiagrams. We check the commutativity of the left subdiagram, the one of the right subdiagram being completely analogous.
Recall that $\mathrm{Pf}(L_{ij},L_{jk})=\det(K_{ijk})$, and that the canonical isomorphism
\[
\det(A)\otimes \det(C)\xrightarrow{\sim}\det(B)
\]
induced by a short exact sequence 
\[
0\to A\to B\to C\to 0
\]
of vector spaces is obtained by looking at elements of $A$ as elements of $B$ via the inclusion and by lifting elements of $C$ to $B$:
\[
(a_1\wedge\cdots \wedge a_{\dim A})\otimes (c_1\wedge\cdots\wedge c_{\dim C})\mapsto a_1\wedge a_{\dim A}\wedge \tilde{c}_1\wedge\cdots \wedge \tilde{c}_{\dim C};
\]
the term on the right hand side is independent of the choice of the lifts $\tilde{c}_i$.
 Let $(v_{ij},v_{jl})$ be an element in $K_{ijl}$, and let $(v_{jk},v_{kl})$ be an element in $L_{jk}\times_{H_k} L_{kl}$ with $\pi_{jk}(v_{jk},v_{kl})=v_{jl}$. Let us pick lifts $w_{ij}$, $w_{jk}$ and $w_{kl}$ of $v_{ij}$, $v_{jk}$ and $v_{kl}$, respectively. Then $(w_{ij},w_{jk},w_{kl})$ is an element in $\ker(\eta_{il})$. 
Following the clockwise route an element $(v_{ij},v_{jl})$ in $K_{ijl}$ can be lifted to the element $(w_{ij},w_{jk}, w_{kl})$ in $\ker(\eta_{il})$. Following the anticlockwise route, the element $(v_{ij},v_{jl})$ in $K_{ijl}$ can be first lifted to the element $(v_{ij},v_{jk},v_{jl})$ in $M_{ijkl}$ and then this can be lifted again to $(w_{ij},w_{jk}, w_{kl})$ in $\ker(\eta_{il})$. If we consider an element $(v_{jk},v_{kl})$ in $K_{jkl}$, instead, in the clockwise route it can be first lifted to $(w_{jk},w_{kl})$ in $\ker(\eta_{jl})$ and then to $(0,w_{jk},w_{kl})$ in $\ker(\eta_{il})$. Following the clockwise route, $(v_{jk},v_{kl})$ can be first lifted to $(0,v_{jk},v_{kl})$ in $M_{ijkl}$ and then this can be  lifted to $(0,w_{jk},w_{kl})$ in $\ker(\eta_{il})$. Finally, elements $w_{ij}, w_{jk}$ and $w_{kl}$ in $\ker(\eta_{ij})$, $\ker(\eta_{jk})$ and $\ker(\eta_{kl})$ are mapped to the elements $(w_{ij},0,0)$, $(0,w_{jk},0)$ and $(0,0,w_{jl})$ of $\ker(\eta_{il})$, respectively, both along the clockwise and along the anticlockwise route. So we have choices of lifts making the clockwise and the anticlockwise isomorphisms identical. Since the isomorphisms in the diagram are independent on the choices of lifts, this proves the commutativity of the left quadrilateral subdiagram. 
{Taking the outer diagram of \eqref{eq:coherence-beta2}, and using the identification $\det(K_{ijk})\cong \mathrm{Pf}(L_{ij},L_{ik})$, we obtain the commutative diagram
\[
       \hskip -2 cm     \begin{tikzcd}[column sep=.6em]
            &\beta_{il}\\
          \beta_{jl}\otimes \beta_{ij}\otimes \mathrm{Pf}(L_{ij},L_{jl})
          \ar[ur,"\psi_{ijl}^{\det}"] && \beta_{kl}\otimes \beta_{ik}\otimes \mathrm{Pf}(L_{ik},L_{kl})\ar[ul,"\psi_{ikl}^{\det}"']\\
           \beta_{kl}\otimes\beta_{jk}\otimes \mathrm{Pf}(L_{jk},L_{kl})\otimes\beta_{ij}\otimes  \mathrm{Pf}(L_{ij},L_{jl})\ar[u," \psi_{jkl}^{\det}\otimes \mathrm{id}"]&&{}
          \\
          \beta_{kl}\otimes\beta_{jk}\otimes\beta_{ij}\otimes \mathrm{Pf}(L_{jk},L_{kl})\otimes  \mathrm{Pf}(L_{ij},L_{jl})
          \ar[u,"\wr"]
          \ar[rr," \mathrm{id}\otimes \phi_{ijkl}^{\det}" ]&&
          \beta_{kl}\otimes\beta_{jk}\otimes\beta_{ij}\otimes \mathrm{Pf}(L_{ij},L_{jk})\otimes \mathrm{Pf}(L_{ik},L_{kl})\ar[uu,"  \mathrm{id}\otimes \psi_{ijk}^{\det}\otimes \mathrm{id}"']
            \end{tikzcd}
\]
By the coboundary equation \eqref{eq:coboundary}, this gives the commutative diagram
\[
       \hskip -2 cm     \begin{tikzcd}[column sep=1.2em]
            &\beta_{il}\\
          \beta_{jl}\otimes \beta_{ij}\otimes \mathrm{Pf}(L_{ij},L_{jl})
          \ar[ur,"\tau_{ijl}^{LR}\psi_{ijl}^{\det}"] && \beta_{kl}\otimes \beta_{ik}\otimes \mathrm{Pf}(L_{ik},L_{kl})\ar[ul,"\tau_{ikl}^{LR}\psi_{ikl}^{\det}"']\\
           \beta_{kl}\otimes\beta_{jk}\otimes \mathrm{Pf}(L_{jk},L_{kl})\otimes\beta_{ij}\otimes  \mathrm{Pf}(L_{ij},L_{jl})\ar[u," \tau_{jkl}^{LR}\psi_{jkl}^{\det}\otimes \mathrm{id}"]&&{}
          \\
          \beta_{kl}\otimes\beta_{jk}\otimes\beta_{ij}\otimes \mathrm{Pf}(L_{jk},L_{kl})\otimes  \mathrm{Pf}(L_{ij},L_{jl})
          \ar[u,"\wr"]
          \ar[rr," \mathrm{id}\otimes \omega_{ijkl}^{LR}\phi_{ijkl}^{\det}" ]&&
          \beta_{kl}\otimes\beta_{jk}\otimes\beta_{ij}\otimes \mathrm{Pf}(L_{ij},L_{jk})\otimes \mathrm{Pf}(L_{ik},L_{kl})\ar[uu,"  \mathrm{id}\otimes \tau_{ijk}^{LR}\psi_{ijk}^{\det}\otimes \mathrm{id}"']
            \end{tikzcd}
\]
 that is the commutativity of diagram   \eqref{eq:coherence-beta}.
}
\end{proof}

\begin{remark}
When the generalized Lagrangians $\eta_{ij}$ and $\eta_{jk}$ are ordinary Lagrangians, i.e., $\eta_{ij}$ is the inclusion $L_{ij}\hookrightarrow H_i\oplus H_j$ of a Lagrangian subspace, equation \eqref{eq:beta-trivializes} reduces to
\[
\beta(\eta_{ij}\times_{H_j}\eta_{jk}){\cong} \mathrm{Pf}(L_{ij},L_{jk}),
\]
i.e., to
\[
\det(\ker(\pi_{i,k}\colon L_{jk}\circ_{\mathrm{spans}}L_{ij}\to H_i\oplus H_k){\cong} \mathrm{Pf}(L_{ij},L_{ik}),
\]
and this is precisely the definition of the Pfaffian line $\mathrm{Pf}(L_{ij},L_{ik})$ as a determinantal line.
\end{remark}
\begin{corollary}
The invertible projective 2-representation
\[
p^\ast \mathrm{Fock}\otimes \beta\colon \mathbf{LagrSpans}_\K \to {2\mathrm{sVect}_\K/\!/\mathbf{B}\mathrm{Pic}(\mathrm{sVect}_\K)}
\]
is a linear 2-representation, i.e. factors through ${2\mathrm{sVect}_\K}$. 
\end{corollary}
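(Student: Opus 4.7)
The plan is to show that the $\Pic(\mathrm{sVect}_\K)$-valued 2-cocycle of the invertible projective 2-representation $p^\ast\mathrm{Fock}\otimes\beta$ is canonically trivial, so that the representation lifts uniquely through the projection $2\mathrm{sVect}_\K\to 2\mathrm{sVect}_\K/\!/\mathbf{B}\Pic(\mathrm{sVect}_\K)$. This is a direct instance of the general recipe advertised in the Introduction: given an invertible projective 2-representation $\rho$ with 2-cocycle $l_\rho$, if one can exhibit a second invertible projective 2-representation whose cocycle is canonically inverse to $l_\rho$, their tensor product is linear.

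First I would observe that the 2-cocycle of a tensor product of invertible projective 2-representations is the tensor product of the individual 2-cocycles, so the 2-cocycle of $p^\ast\mathrm{Fock}\otimes\beta$ on a pair of composable Lagrangian spans $(\eta_{ij},\eta_{jk})$ is the invertible object
\[
p^\ast\mathrm{Pf}(L_{ij},L_{jk})\otimes l^\beta_{\eta_{ij},\eta_{jk}}
\]
of $\mathrm{sVect}_\K$, with $L_{ij}=\mathrm{Im}(\eta_{ij})$ and $L_{jk}=\mathrm{Im}(\eta_{jk})$. Then I would invoke Proposition \ref{prop:beta-to-the-minus-one}, which supplies the canonical isomorphism
\[
\psi_{ijk}\colon\beta(\eta_{jk})\otimes\beta(\eta_{ij})\otimes\mathrm{Pf}(L_{ij},L_{jk})\xrightarrow{\sim}\beta(\eta_{ik}).
\]
Combined with the formula $l^\beta_{\eta_{ij},\eta_{jk}}=\beta(\eta_{ik})^{-1}\otimes\beta(\eta_{jk})\otimes\beta(\eta_{ij})$ established in the lemma immediately preceding that proposition, $\psi_{ijk}$ yields a canonical trivialization
\[
p^\ast\mathrm{Pf}(L_{ij},L_{jk})\otimes l^\beta_{\eta_{ij},\eta_{jk}}\xrightarrow{\sim}\mathbf{1}
\]
of the total 2-cocycle on every 3-simplex.

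The remaining step will be to verify the coherence of this trivialization on 4-simplices, i.e., the pentagonal compatibility \eqref{eq:coherenceC2-drawings}. But this is precisely the content of diagram \eqref{eq:coherence-beta}, whose commutativity is already established in the proof of Proposition \ref{prop:beta-to-the-minus-one}; no new calculation will be required. It then follows that $p^\ast\mathrm{Fock}\otimes\beta$ factors through $2\mathrm{sVect}_\K$, giving the explicit linear 2-representation $J\colon\mathbf{LagrSpans}_\K\to 2\mathrm{sVect}_\K$ that assigns the Clifford superalgebra $\Cl(H,\alpha)$ to an object $(H,\alpha)$ and the twisted Fock bisupermodule $F(L_{ij})\otimes\det(\ker(\eta_{ij}))$ to a Lagrangian span $\eta_{ij}$, recovering Ludewig's theorem from \cite{ludewig}.

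The only point requiring mild care will be the direction of inverses: Proposition \ref{prop:beta-to-the-minus-one} is phrased as $(l^\beta)^{-1}\cong p^\ast\mathrm{Pf}$, which is equivalent to $l^\beta\otimes p^\ast\mathrm{Pf}\cong\mathbf{1}$ by tensoring with $l^\beta$, and it is this second form that directly trivializes the 2-cocycle. Beyond this bookkeeping there is no genuine obstacle, since all of the hard combinatorial and linear-algebraic content has already been absorbed into Proposition \ref{prop:beta-to-the-minus-one}.
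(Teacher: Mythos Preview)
Your proposal is correct and follows essentially the same approach as the paper: identify the 2-cocycle of $p^\ast\mathrm{Fock}\otimes\beta$ as $p^\ast\mathrm{Pf}\otimes l^\beta$ and then invoke Proposition~\ref{prop:beta-to-the-minus-one} to conclude it is trivial. The paper's proof is a two-line version of what you wrote; your additional unpacking of the coherence via diagram~\eqref{eq:coherence-beta} and the explicit description of the resulting linear 2-representation are accurate and simply make explicit what the paper leaves implicit.
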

\begin{proof}
The 2-cocycle for the projective 2-representation  $p^\ast \mathrm{Fock}\otimes \beta$ is $p^\ast\mathrm{Pf}\otimes l^\beta$, and this is trivial by Proposition \ref{prop:beta-to-the-minus-one}.  
\end{proof}
For emphasis, let us write explicitly how the 2-representation 
\[
p^\ast \mathrm{Fock}\otimes \beta\colon \mathbf{LagrSpans}_\K \to \mathrm{Pic}({2\mathrm{sVect}_\K})\hookrightarrow {2\mathrm{sVect}_\K}
\]
acts on objects and morphisms of $\mathbf{LagrSpans}_\K$. We have
\[
(H,\alpha) \mapsto \mathrm{Cl}(H,\alpha)
\]
and
\[
\eta_{ij}\mapsto F(\mathrm{Im}(\eta_{ij}))\otimes \det(\ker(\eta_{ij})).
\]
A direct proof that this assignment is a linear 2-representation  $\mathbf{LagrSpans}_\K \to {2\mathrm{sVect}_\K}$ has been recently given by Ludewig \cite{ludewig}. The statement had already appeared as Gluing Lemma 2.3.14 in \cite{what-is-an-elliptic-object}.

\begin{remark}\label{rem:von-neumann}
For applications to supersymmetric Euclidean or conformal quantum field theories, e.g., in the Stolz--Teichner approach to elliptic cohomology \cite{what-is-an-elliptic-object} , the use of infinite dimensional (super-)Hilbert spaces is unavoidable. The structures considered in this Section for finite dimensional  (super-)Hilbert spaces need to be properly refined in order to make the constructions presented there continue to work. For instance, we already mentioned that in the general setting of possibly infinite dimensional antiunvolutive Hilbert spaces one requires Lagrangian subspaces to be closed. This requirement alone is however not sufficient in order to have a well defined category of Lagrangian correspondences: the linear composition of two Lagrangian submanifolds is not necessarily closed (see \cite[Example 2.7]{ludewig-roos} and \cite[Example 2.18]{ludewig}). A solution to this and other issues, leading to a well defined category of Lagrangian correspondences between possibly infinite-dimensional (super-)Hilbert spaces, consists in considering \emph{polarized} (super-)Hilbert spaces, where a polarization of an antiunvolutive (super-)Hilbert space $(H,\alpha)$ is the choice of an equivalence class of subLagrangian subspaces $[U]$,\footnote{With respect to a suitable notion of equivalence, called `closeness'.} and with Lagrangian correspondences that are compatible with the polarizations on the source and on the target, see \cite{ludewig} for details.  Next, one has to extend the wannabe functor $\mathrm{Fock}$ to this setting. Here again problems arise from infinite-dimensionality: both the Clifford algebra $\mathrm{Cl}(H,\alpha)$
for an infinite dimensional Hilbert space and the Fock bimodule $\bigwedge^\bullet L$ will be infinite-dimensional when $H$ is an infinite dimensional Hilbert space, and statements of the previous Section do not hold true anymore. In particular, $\mathrm{Cl}(H,\alpha)$ will not be an invertible algebra, and $\bigwedge^\bullet L$ will not be an invertible bimodule. An expected solution comes from adding topology to the picture. More precisely, the following strategy is currently being worked out in detail by Raphael Schmidpeter in his PhD Thesis. One completes the algebraic Fock module $F(L)=\bigwedge^\bullet L$ to a super-Hilbert space $F_{\mathrm{Hilb}}(L)=(\bigwedge^\bullet L)_{\mathrm{Hilb}}$ and the Clifford superalgebra $\mathrm{Cl}(H,\alpha)$ to a $\ast$-(super-)algebra $\mathrm{Cl}(H,\alpha,[U])_{\mathrm{vN}}$ of bounded operators on $(\bigwedge^\bullet L)_{\mathrm{Hilb}}$. That is, the 2-category $2\mathrm{sVect}_{\mathbb{K}}$ of finite dimensional $\mathbb{K}$-superalgebras, superbimodules and intertwiners is extended to 2-category $2\mathrm{sHilb}_{\mathbb{K}}$ of super von Neumann algebras over $\mathbb{K}$, Hilbert superbimodules, and continuous intertwiners. The algebraic tensor product of (super-)bimodules is replaced by Connes' fusion product of Hilbert bi(super-)modules over von Neumann (super-)algebras. Doing so, the Fock invertible wannabe functor $\mathrm{Fock}\colon \mathbf{LagrCorr}_\K\dashrightarrow 2\mathrm{sVect}_\K$ extends to an invertible wannabe functor
\[
\mathrm{Fock}\colon \mathbf{LagrCorr}^{\mathrm{plrzd}}_\K\dashrightarrow 2\mathrm{sHilb}_\K,
\]
and the constructions of this Section apply.
\end{remark}

\bibliographystyle{alphaurl}

\end{document}